\numberwithin{equation}{section}
\numberwithin{table}{section}
\theoremstyle{plain}
\newtheorem{theorem}{Theorem}[section]
\newtheorem{lemma}{Lemma}[section]
\newtheorem{rmk}{Remark}[section]
\theoremstyle{remark}
\newcommand{\Eh}{\mathscr{E}_h}
\newcommand{\TT}{\mathscr{T}}
\newcommand{\Voh}{\mathring{V}_h} 
\newcommand{\Zoh}{\mathring{Z}_h} 
\def\norm#1#2{\left\| #1 \right\|_{#2}} 
\newcommand{\avephio}{\overline{\phi}_0}  
\newcommand{\dtau}{\delta_\tau}
\NewDocumentCommand{\dgal}{sO{}m}{%
  \IfBooleanTF{#1}
    {\dgalext{#3}}
    {\dgalx[#2]{#3}}%
}
\NewDocumentCommand{\dgalext}{m}{%
  \sbox0{%
    \mathsurround=0pt 
    $\left\{\vphantom{#1}\right.\kern-\nulldelimiterspace$%
  }%
  \sbox2{\{}%
  \ifdim\ht0=\ht2
    \{\kern-.625\wd2 \{#1\}\kern-.625\wd2 \}%
  \else
    \left\{\kern-.7\wd0\left\{#1\right\}\kern-.7\wd0\right\}%
  \fi
}
\NewDocumentCommand{\dgalx}{om}{%
  \sbox0{\mathsurround=0pt$#1\{$}%
  \sbox2{\{}%
  \ifdim\ht0=\ht2
    \{\kern-.625\wd2 \{#2\}\kern-.625\wd2 \}%
  \else
    \mathopen{#1\{\kern-.7\wd0 #1\{}
    #2
    \mathclose{#1\}\kern-.7\wd0 #1\}}
  \fi
}
\def\forme(#1,#2){\left<#1,#2\right>}
\newcommand{\iprd}[2]{\left( #1 , #2 \right)} 
\newcommand{\aIPh}[2]{a_h^{IP}\left( #1 , #2 \right)}
\newcommand{\aiprd}[2]{a\!\left( #1 , #2 \right)}
\newcommand{\phih}{\phi_h}
\newcommand{\muh}{\mu_h}
\newcommand{\psih}{\psi_h}
\newcommand{\nuh}{\nu_h}
\newcommand{\varphih}{\varphi_h}
\newcommand{\err}{\mathbbold{e}}
\newcommand{\errP}{\mathbbold{e}_P}
\newcommand{\errR}{\mathbbold{e}_R}
\newcommand{\errh}{\mathbbold{e}_h}
\DeclareSymbolFont{bbold}{U}{bbold}{m}{n}
\DeclareSymbolFontAlphabet{\mathbbold}{bbold}
\newcommand{\Rmnum}[1]{\expandafter\@slowromancap\romannumeral #1@}
\definecolor{listinggray}{gray}{0.9}
\definecolor{lbcolor}{rgb}{0.9,0.9,0.9}
\definecolor{Darkgreen}{rgb}{0.0,0.39,0.0}
\date{\today}
\title{A C$^0$ Interior Penalty Method for the Phase Field Crystal Equation\thanks{The work of the second author
 was supported in part by the National Science Foundation under Grant No. DMS-1520862.}}
	\author{
Amanda E. Diegel\thanks{Department of Mathematics and Statistics, Mississippi State University, Mississippi State, MS 39762 (adiegel@math.msstate.edu)},
	\and
Natasha Sharma\thanks{Department of Mathematical Science, University of Texas at El Paso, El Paso, TX 79968 (nssharma@utep.edu)}}
\begin{document}

\maketitle

\begin{abstract}
We present a C$^0$ interior penalty finite element method for the sixth-order phase field crystal equation. We demonstrate that the numerical scheme is uniquely solvable, unconditionally energy stable, and convergent. We remark that the novelty of this paper lies in the fact that this is the first C$^0$ interior penalty finite element method developed for the phase field crystal equation. Additionally, the error analysis presented develops a detailed methodology for analyzing time dependent problems utilizing the C$^0$ interior penalty method. We furthermore benchmark our method against numerical experiments previously established in the literature.
\end{abstract}

{\bf Keywords:} Phase field crystal, C0 interior penalty method, finite element method, sixth-order parabolic, higher-order methods, nonlinear partial differential equation, energy stability

\section{Introduction}\label{sec:intro}
Phase field crystal (PFC) methodolgy is emerging as a popular means of modeling many important phenomena observed in materials science. Examples include grain growth, dendritic and eutectic solidification, epitaxial growth, and more. We refer the interested reader to the review paper \cite{PDEetal:07:PFCreview} for more details including references to the examples above. Due to the growing number of applications of the PFC model and its variations, there is an interest in developing accurate and stable numerical schemes. Indeed, much progress has already been accomplished within the finite difference and Fourier spectral framework. The goal of this paper, however, is to present a finite element approach to the PFC model which is unconditionally energy stable, uniquely solvable, and convergent.

Let $ \Omega \subset \mathbb{R}^2$ be an open polygonal domain and consider the dimensionless energy of the form \cite{WWL:09:PFC}
\begin{align}\label{eq:pfc-energy}
E(\phi) = \int_{\Omega} \left\{\frac{1}{4}\phi^4 + \frac{1-\epsilon}{2}\phi^2 - |\nabla \phi|^2 +\frac{1}{2}(\Delta \phi)^2\right\} \, dx,
\end{align}
where $\phi$ is the density field, and $\epsilon$ $< 1$ is a constant. Following \cite{WWL:09:PFC}, we consider conserved gradient dynamics. The resulting equation is known as the phase field crystal (PFC) equation, 
\begin{align}\label{eq:pfc-pre}
\partial_t \phi = \nabla \cdot \left(M(\phi) \nabla \mu\right),
\end{align}
where $M(\phi)>0$ is a mobility, $\mu$ is the chemical potential defined as
\begin{align}\label{eq:chem-pot-pre}
\mu := \delta_\phi E = \phi^3 + (1-\epsilon) \phi + 2 \Delta \phi + \Delta^2 \phi,
\end{align} 
with $\delta_\phi E$ denoting the variational derivative of $E$ with respect to $\phi$, and where either the natural boundary conditions $\partial_n \phi = \partial_n \Delta \phi = \partial_n \mu = 0$ or periodic boundary conditions are assumed. In the remainder of the paper, we only consider the special case for which the mobility is held constant $M(\phi) \equiv \mathcal{M}$ and natural boundary conditions are assumed. Therefore, the system of equations which is considered herein is:
\begin{subequations}
\begin{align}
\partial_t \phi  &= \nabla \cdot \left(\mathcal{M} \nabla \mu\right), 
\label{eq:pfc}
\\
\mu &= \phi^3 + (1-\epsilon) \phi + 2 \Delta \phi + \Delta^2 \phi,
\label{eq:chem-pot}
\\
\partial_n \phi &= \partial_n \Delta \phi = \partial_n \mu = 0.
\label{eq:boundary-cond}
\end{align}
\end{subequations}

As stated above, many papers focusing on finite difference and spectral methods for the phase field crystal equation exist in the literature \cite{BRV:07:PFC,DFWWZ:18:PFCconv,HL:18:6oCH,HWWL:09:PFC,SLL:2016:PFC,VDBCC:2015:PFC,WWL:09:PFC}. We summarize a few of the most relevant references here. In \cite{WWL:09:PFC}, Wise, Wang, and Lowengrub present an energy stable and convergent finite difference scheme for the phase field crystal equation. They use a first order in time convex-splitting scheme for time discretization and show a detailed analysis of the energy stability resulting from the proposed time stepping approach. Additionally, they are able to show unique solvability and local-in-time error estimates which ensure the convergence of the scheme. In \cite{DFWWZ:18:PFCconv}, Dong et.~al.~present a convergence analysis and numerical implementation of the second order in time scheme originally considered in \cite{HWWL:09:PFC} where again spacial discretization is achieved via a finite difference method. 

Numerical schemes employing the finite element framework for the PFC model is lacking in the literature. One of the most challenging aspects of the PFC model with respect to a finite element discretization has to do with the fourth order term residing in the chemical potential \eqref{eq:chem-pot}. However, we refer the reader to two papers in the literature which present different methods for handling the fourth order term. In \cite{BRV:07:PFC}, Backofen, R\"{a}tz, and Voigt introduce a mixed formulation composed of three second order equations. For time discretization, they employ a first order in time finite element method which essentially follows a backward Euler scheme but where the nonlinear term in the chemical potential is linearized. A brief comparison of the time stepping scheme utilized in the Backofen paper to the time stepping scheme employed in \cite{WWL:09:PFC} is presented in the latter paper. We point out here that, due to their chosen mixed formulation, the resulting system to be solved is non-symmetric and indeterminant possibly resulting in large computational costs. In \cite{HL:18:6oCH}, Hoppe and Linsenmann introduce a C$^0$ interior penalty finite element method for a sixth order Cahn-Hilliard equation which models microemulsification processes and is closely related to the PFC model. However, they are only able to establish semi-optimal convergence results and are not able to show their numerical scheme preserves energy stability. Additionally, the work in \cite{HL:18:6oCH} tends to deviate from the framework established in the below mentioned literature on C$^0$ interior penalty methods. 

In this paper, we employ the C$^0$ interior penalty (C$^0$-IP) method for spacial discretization and the convex-splitting time discretization proposed in \cite{WWL:09:PFC} to the phase field crystal model \eqref{eq:pfc}-\eqref{eq:boundary-cond}.  The C$^0$-IP method is characterized by the use of C$^0$ Lagrange finite elements where the C$^1$ continuity requirement inherent to standard conforming finite element methods has been replaced with interior penalty techniques and was first introduced by G. Engel et al.~in \cite{EGHLMT:02:fourth} and revisited and analyzed by S.C.~Brenner and co-workers in \cite{brenner:11:frontiers,BGGS:12:C0CHboundary,BGS:09:posteriori,BN:11:C0elliptic,BS:05:C0IP,BS:06:multigrid,BSZZ:12:quadratic,BSZ:14:quadratic,BSZ:15:post,BW:05:schwarz,BWZ:04:poincare} also see \cite{FHP:15:convergence} and \cite{GG:2013:C0IPFK,GGN:2013:C0IP, GN:2011:sixthIP}. These works include \emph{a priori} and \emph{a posteriori} error analyses as well as multigrid and domain decomposition solvers for the plate bending problem. While C$^0$-IP methods have been around for almost two decades, its application to time dependent nonlinear problems with Cahn-Hilliard type boundary conditions is relatively recent and the authors are aware of only two such papers: the Hoppe and Linsenmann paper mentioned above and the paper \cite{GG:2013:C0IPFK} by Gudi and Gupta on a C$^0$ interior penalty method for the extended Fisher-Kolmogorov fourth order equation. 

In contrast to the work done by Hoppe and Linsenmann in \cite{HL:18:6oCH}, this paper presents a C$^0$-IP method for the PFC equation which is energy stable. Additionally, we are able to prove unique solvability and, most importantly, establish a framework for the convergence of C$^0$ interior penalty methods which follows in the framework of most of the existing literature regarding these methods. In contrast to the work completed by Gudi and Gupta in \cite{GG:2013:C0IPFK}, we are able to establish convergence results for the sixth order problem and in the case in which solutions to the weak form of the PDE are not elements of the finite element space. Furthermore, we present two numerical experiments which demonstrate the effectiveness of our method. Finally, we note that the convex-splitting time stepping strategy was chosen due to the nice properties regarding energy stability and unique solvability for $H^{-1}$ gradient flow problems. However, we emphasize that the novelty of this paper lies in the fact that this is the first C$^0$ interior penalty finite element method developed for the phase field crystal equation and the error analysis presented develops a detailed methodology for analyzing time dependent problems utilizing the C$^0$ interior penalty method. It is likely that a different time-stepping strategy would lead to higher-order convergence with respect to time (see for example \cite{calo2020splitting, ZMQ:13:adaptive}), and different strategies will be considered in future works.

The development of the C$^0$-IP method relies on a weak formulation of \eqref{eq:pfc}-\eqref{eq:boundary-cond}. To this end, we introduce the function space $Z:= \{z \in H^2(\Omega) | {\bf n} \cdot \nabla z = 0 \text{ on } \partial \Omega\}$ and remark that we use the standard Sobolev space and norm notation throughout the paper. In particular, we let $\norm{\cdot}{L^2}$ denote the standard $L^2$ norm over the region $\Omega$ but specify the notation $\norm{\cdot}{L^2(S)}$ as the $L^2$ norm over a general region $S \subset \mathbb{R}^2$ which is not $\Omega$.   A weak formulation of \eqref{eq:pfc}-\eqref{eq:boundary-cond} may then be written as follows \cite{PS:13:CHweak}: find $(\phi, \mu)$ such that 
\begin{subequations}
\begin{align}
\phi &\in L^{\infty}(0,T; Z) \cap L^2(0,T;H^3(\Omega)),
\\
\partial_t \phi &\in L^2(0,T; H_N^{-1}(\Omega)),
\\
\mu &\in L^2(0,T;H^1(\Omega)),
\end{align}
\end{subequations}
and there hold for almost all $t \in (0,T)$
\begin{subequations}\label{eq:pfc-weak}
\begin{align}
\langle \partial_t \phi, \nu \rangle + \iprd{\mathcal{M} \nabla \mu}{\nabla \nu} &= 0 &\forall \, \nu &\in H^1(\Omega)
\label{eq:pfc-weak-a}
\\
\iprd{\left(\phi\right)^3 + (1-\epsilon)\phi}{\psi} - 2 \iprd{ \nabla \phi}{ \nabla \psi } + \aiprd{ \phi }{\psi} - \iprd{ \mu}{ \psi}&= 0 &\forall \, \psi &\in Z
\end{align}
\end{subequations}
with the compatible initial data 
\begin{align}
\phi(0) = \phi_0 \in H^4(\Omega) \text{ such that } {\bf n} \cdot \nabla \phi_0 = 0 \text{ and } {\bf n} \cdot \nabla \Delta \phi_0 = 0,
\end{align}
and where $(u,v)$ is the $L^2(\Omega)$ inner product of $u$ and $v$ and $\aiprd{u}{v}:= \left(\nabla^2 u : \nabla^2 v \right)$ is the inner product of the Hessian matrices of $u$ and $v$. Additionally, we use the notations $H_N^{-1}(\Omega)$ to indicate the dual space of $H^1(\Omega)$ and $\langle  \, \cdot \, , \, \cdot \, \rangle$ to indicate a duality pairing. Throughout the paper, we use the notation $\Phi(t) := \Phi(\, \cdot \, , t)\in X$, which views a spatiotemporal function as a map from the time interval $[0,T]$ into an appropriate Banach space, $X$.  The system \eqref{eq:pfc-weak} is mass conservative:  for almost every $t\in[0,T]$, $\iprd{\phi(t)-\phi_0}{1} = 0$. This property may be observed by setting $\nu = 1$ in \eqref{eq:pfc-weak-a}.  


The paper proceeds as follows. Section \ref{sec:fem} develops the fully discrete C$^0$ interior penalty finite element method for the phase field crystal model. Section \ref{sec:unique-solvability-stability} establishes unconditional unique solvability and unconditional stability. Section \ref{sec:error-estimates} presents the error analysis. Section \ref{sec:numerical} demonstrates the effectiveness of our method through two numerical experiments and we conclude in Section \ref{sec:conclusion}.

\section{A C$^0$ Interior Penalty Finite Element Method}\label{sec:fem}

 \par\noindent
In this section, we develop a fully discrete C$^0$-IP method for the phase field crystal equation \eqref{eq:pfc}--\eqref{eq:boundary-cond}. Throughout the remainder of the paper we consider only the case with $\mathcal{M} \equiv 1$ but note that the results will hold for any $\mathcal{M} > 0$. Let $\TT_h$ be a geometrically conforming, locally quasi-uniform simplicial triangulation of $\Omega$. We introduce the following notation:
\begin{itemize}\itemsep0ex
\item $h_K = $ diameter of triangle $K$ ($h = \max_{K \in \TT_h} h_K$),
\item $v_K = $ restriction of the function $v$ to the triangle $K$,
\item $|K| = $ area of the triangle $K$,
\item $\Eh = $ the set of the edges of the triangles in $\TT_h$,
\item $e = $ the edge of a triangle,
\item $|e| = $ the length of the edge,
\item $V_h := \{ v \in C(\overline{\Omega}) | v_K  \in P_1(K) \forall K \in \TT_h \}$ the standard finite element space associated with $\TT_h$ of degree 1,
\item $Z_h := \{ v \in C(\overline{\Omega}) | v_K  \in P_2(K) \forall K \in \TT_h \}$ the standard Lagrange finite element spaces associated with $\TT_h$ of degree 2.
\end{itemize}

Let $M$ be a positive integer such that $ t_m = t_{m-1} + \tau $ for $\ 1 \le m \le M$ where $t_0 = 0 , \ t_M = t_F$ with $\tau = \nicefrac{t_F}{M}$.  A fully discrete C$^0$ interior penalty method for \eqref{eq:pfc-weak} is: given $\phih^{m-1} \in Z_h$, find $\phih^{m}, \muh^m \in  Z_h \times V_h$ such that 
\begin{subequations} \label{eq:C0IP-scheme}
\begin{align}
\label{eq:C0IP-scheme-a}
\iprd{\dtau \phih^m}{\nu_h} + \iprd{\nabla \muh^m}{\nabla \nu_h} &= 0, \forall \, \nu_h \in V_h
\\
\label{eq:C0IP-scheme-b}
 \aIPh{\phih^m}{\psi_h} +\iprd{\left(\phih^m\right)^3 + (1-\epsilon)\phih^m}{\psi_h} - 2 \iprd{ \nabla \phih^{m-1}}{ \nabla \psi_h }- \iprd{ \muh^m}{ \psi_h}&= 0, \forall \, \psi_h \in Z_h,
\end{align}
\end{subequations}
with initial data taken to be $\phih^0 := P_h \phi_0 = P_h \phi(0)$ where $P_h: Z \rightarrow Z_h$ is a Ritz projection operator (reminiscent of the projection defined in~\cite[p.~887]{elliott1989nonconforming}) such that
\begin{align}\label{eq:H2-Ritz-projection}
\aIPh{P_h \phi - \phi}{\xi} + (1-\epsilon) \iprd{P_h \phi - \phi}{\xi}  = 0 \quad \forall \, \xi \in Z_h, \quad \iprd{P_h \phi - \phi}{1} = 0,
\end{align}
and where $\displaystyle \dtau \phih^{m} := \frac{\phih^{m} - \phih^{m-1}}{\tau}$. The bilinear form $\aIPh{\cdot}{\cdot}$ is defined by
\begin{align}
\label{eq:aIPh-def}
\aIPh{w}{v} :=& \, \sum_{K \in \TT_h}  \int_K \left(\nabla^2 w : \nabla^2 v\right) \, dx + \sum_{e \in \Eh} \int_e \dgal[\Bigg]{\frac{\partial^2 w}{\partial n_e^2} } \left\llbracket \frac{\partial v}{\partial n_e} \right\rrbracket dS 
\nonumber
\\
&+ \sum_{e \in \Eh} \int_e \dgal[\Bigg]{\frac{\partial^2 v}{\partial n_e^2} } \left\llbracket \frac{\partial w}{\partial n_e} \right\rrbracket dS + \alpha \sum_{e \in \Eh} \frac{1}{|e|} \int_e  \left\llbracket \frac{\partial w}{\partial n_e} \right\rrbracket  \left\llbracket \frac{\partial v}{\partial n_e} \right\rrbracket dS,
\end{align}
with $\alpha \ge 1$ known as a penalty parameter. The jumps and averages that appear in \eqref{eq:aIPh-def} are defined as follows. For an interior edge $e$ shared by two triangles $K_\pm$ where $n_e$ points from $K_-$ to $K_+$, we define on the edge $e$
\begin{align}
\label{eq:jumps-ave-interior}
\left\llbracket \frac{\partial v}{\partial n_e} \right\rrbracket = n_e \cdot \left(\nabla v_+ - \nabla v_-\right) \quad \text{and} \quad \dgal[\Bigg]{\frac{\partial^2 v}{\partial n_e^2} } = \frac{1}{2}\left( \frac{\partial^2 v_-}{\partial n_e^2} +  \frac{\partial^2 v_+}{\partial n_e^2} \right),
\end{align}
where $\displaystyle \frac{\partial^2 u}{\partial n_e^2} = n_e \cdot \left(\nabla^2 u\right) n_e$ and where $v_\pm = v |_{K_\pm}$. For a boundary edge $e$ which is an edge of the triangle $K \in \TT_h$, we take $n_e$ to be the unit normal pointing towards the outside of $\Omega$ and define on the edge $e$
\begin{align}\label{eq:jumps-ave-boundary}
\left\llbracket \frac{\partial v}{\partial n_e} \right\rrbracket = - n_e \cdot \nabla v_K \quad \text{and} \quad \dgal[\Bigg]{\frac{\partial^2 v}{\partial n_e^2} } = n_e \cdot \left( \nabla^2 v \right) n_e.
\end{align}

\begin{rmk}
Note that the definitions \eqref{eq:jumps-ave-interior} and \eqref{eq:jumps-ave-boundary} are independent of the choice of $K_\pm$, or equivalently, independent of the choice of $n_e$ \cite{brenner:11:frontiers}.
\end{rmk}

\section{Unique Solvability and Stability}\label{sec:unique-solvability-stability}

In this section, we show that the C$^0$-IP method for the PFC equation outlined in the previous section admits a unique solution and that the system follows an energy law similar to \eqref{eq:pfc-energy}. In order to show the existence of a unique solution and unconditional energy stability, we will need the following definitions and lemma. First, we define the following mesh dependent norm
\begin{align}\label{def:coip-norm}
\norm{v_h}{2,h}^2:=\sum_{K\in \TT_h} |v_h|_{H^2(K)}^2 + \sum_{e\in \Eh} \frac{\alpha}{|e|} \norm{\left\llbracket \frac{\partial v_h}{\partial n_e} \right\rrbracket}{L^2(e)}^2.
\end{align}
The next lemma guarantees the boundedness of $\aIPh{\cdot}{\cdot}$.

\begin{lemma}[Boundedness of $\aIPh{\cdot}{\cdot}$]\label{lem:aIPh-boundedness} There exists positive constants $C_{cont}$ and $C_{coer}$ such that for choices of the penalty parameter $\alpha$ large enough we have
\begin{align}
\aIPh{w_h}{v_h} &\le C_{cont} \norm{w_h}{2,h} \norm{v_h}{2,h} \quad \forall \, w_h, v_h \, \in Z_h,  \label{eq:cty}
\\
C_{coer}\norm{w_h}{2,h}^2 &\le \aIPh{w_h}{w_h} \quad \forall \, w_h \in Z_h, \label{eq:coer}
\end{align}
where the constants $C_{cont}$ and $C_{coer}$ depend only on the shape regularity of $\TT_h$.
\end{lemma}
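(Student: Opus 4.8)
The plan is to establish \eqref{eq:cty} and \eqref{eq:coer} separately, both resting on a single scaled trace estimate for the edge terms. The key structural fact is that every $v_h \in Z_h$ is piecewise $P_2$, so its Hessian $\nabla^2 v_h$ is constant on each $K \in \TT_h$ and hence $\tfrac{\partial^2 v_h}{\partial n_e^2} = n_e \cdot (\nabla^2 v_h)\, n_e$ is constant on $K$. Combining this with shape regularity (which gives $|K| \sim |e|^2$ for $e \subset \partial K$), I would first prove
\[
\norm{\dgal{\tfrac{\partial^2 v_h}{\partial n_e^2}}}{L^2(e)}^2 \;\le\; \frac{C_t}{|e|}\sum_{K \supset e} |v_h|_{H^2(K)}^2,
\]
where the sum runs over the (at most two) triangles having $e$ as an edge and $C_t$ depends only on shape regularity; for an interior edge this follows from $\norm{n_e\cdot(\nabla^2 v_h) n_e}{L^2(e)}^2 = |e|\,\bigl(n_e\cdot(\nabla^2 v_h) n_e\bigr)^2 \le \tfrac{|e|}{|K|}|v_h|_{H^2(K)}^2$ together with the definition of the average, and boundary edges are handled identically with a single triangle.

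For the continuity bound \eqref{eq:cty} I would split $\aIPh{w_h}{v_h}$ into its four summands. The volume term is controlled elementwise by the Cauchy--Schwarz inequality and then by the discrete Cauchy--Schwarz inequality, yielding $(\sum_K |w_h|_{H^2(K)}^2)^{1/2}(\sum_K |v_h|_{H^2(K)}^2)^{1/2} \le \norm{w_h}{2,h}\norm{v_h}{2,h}$; the penalty term is bounded directly against the norm by Cauchy--Schwarz over the edges. For the two cross terms I would, on each edge, pair the average factor (estimated by the trace inequality above) with the jump factor, writing the edge contribution as $\bigl(|e|/\alpha\bigr)^{1/2}\norm{\dgal{\cdots}}{L^2(e)}\cdot(\alpha/|e|)^{1/2}\norm{\llbracket\cdots\rrbracket}{L^2(e)}$, and then apply Cauchy--Schwarz over $\Eh$. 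Using the trace estimate, the finite-overlap property of the mesh, and $\alpha \ge 1$, both cross terms are bounded by a constant multiple of $\norm{w_h}{2,h}\norm{v_h}{2,h}$, which gives \eqref{eq:cty}.

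For the coercivity estimate \eqref{eq:coer} I would expand
\[
\aIPh{w_h}{w_h} = \sum_K |w_h|_{H^2(K)}^2 + 2\sum_e \int_e \dgal{\tfrac{\partial^2 w_h}{\partial n_e^2}}\left\llbracket \tfrac{\partial w_h}{\partial n_e}\right\rrbracket dS + \alpha \sum_e \frac{1}{|e|}\norm{\left\llbracket \tfrac{\partial w_h}{\partial n_e}\right\rrbracket}{L^2(e)}^2,
\]
and bound the indefinite middle term from below with Young's inequality carrying a free parameter $\delta > 0$, splitting it as $-\tfrac1\delta \sum_e |e|\,\norm{\dgal{\cdots}}{L^2(e)}^2 - \delta\sum_e \tfrac1{|e|}\norm{\llbracket\cdots\rrbracket}{L^2(e)}^2$. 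The first piece is absorbed, via the trace estimate and finite overlap, into $\tfrac{C_t}{\delta}\sum_K |w_h|_{H^2(K)}^2$. Choosing $\delta$ large enough that this coefficient is at most $\tfrac12$ retains half of the $H^2$ seminorm, and then choosing $\alpha \ge 2\delta$ leaves at least $\tfrac{\alpha}{2}\sum_e \tfrac1{|e|}\norm{\llbracket\cdots\rrbracket}{L^2(e)}^2$ from the penalty term; together these yield $\aIPh{w_h}{w_h} \ge \tfrac12 \norm{w_h}{2,h}^2$, i.e.\ $C_{coer}=\tfrac12$. This also pins down the meaning of ``$\alpha$ large enough'': the threshold $2\delta$ depends only on the shape-regularity constant $C_t$.

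The main obstacle is this coercivity argument, since the cross terms are sign-indefinite and their control requires both the precise $|e|^{-1}$ scaling of the trace estimate and a careful two-stage choice of constants ($\delta$ first to recover the seminorm, then $\alpha$ to dominate the residual jump term). The continuity inequality, the trace estimate for piecewise quadratics, and the boundary-edge bookkeeping are comparatively routine once the constant-Hessian observation is exploited.
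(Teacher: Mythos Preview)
Your argument is correct and is precisely the standard proof of continuity and coercivity for the $C^0$ interior penalty bilinear form; the paper itself does not reproduce this argument but simply cites \cite{brenner:11:frontiers}, where exactly the trace/inverse estimate plus Cauchy--Schwarz/Young strategy you outline is carried out. In particular, your edge trace bound for piecewise quadratics, the $(\delta,\alpha)$ two-step absorption for coercivity, and the resulting dependence of the threshold for $\alpha$ only on shape regularity all match the cited reference.
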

\begin{proof}
The proof of the Lemma may be found in \cite{brenner:11:frontiers}.
\end{proof}

Additionally, we define the spaces $L_0^2(\Omega) := \{v \in L^2(\Omega) | \iprd{v}{1} = 0 \}, \mathring{H}^1(\Omega) := H^1(\Omega) \cap L_0^2(\Omega), \mathring{H}_N^{-1}(\Omega):=\{v \in H_N^{-1}(\Omega) | \langle v, 1\rangle = 0\}, \Voh := V_h\cap L_0^2(\Omega)$, and $\Zoh := Z_h \cap L_0^2(\Omega)$. The operator $\mathsf{T}: \mathring{H}^{-1}_N(\Omega) \rightarrow \mathring{H}^1(\Omega) $ is often referred to as the `inverse Laplacian' and is defined via the following variational problem: given $\zeta\in \mathring{H}_N^{-1}(\Omega)$, find $\mathsf{T}\zeta\in \mathring{H}^1(\Omega)$ such that
	\begin{equation}
	\label{def:inverse-laplacian}
\iprd{\nabla\mathsf{T} \zeta}{\nabla\chi} = \langle \zeta, \chi\rangle \qquad \forall \,\chi\in \mathring{H}^1(\Omega).
	\end{equation} 
The well posedness of the operator $\mathsf{T}$ is well known, see for example \cite{DFW:15:CHDS}, and an induced negative norm may be defined such that $\norm{v}{H_N^{-1}} = \iprd{\nabla \mathsf{T} v}{\nabla \mathsf{T} v}^{\nicefrac{1}{2}}  = \langle v, \mathsf{T} v \rangle^{\nicefrac{1}{2}} = \langle \mathsf{T} v, v \rangle^{\nicefrac{1}{2}}$. We furthermore define a discrete analog of the inverse Laplacian, $\mathsf{T}_h: \Zoh \rightarrow \Zoh$, via the variational problem: given $\zeta\in \Zoh$, find $\mathsf{T}_h \zeta\in \Zoh$ such that
	\begin{equation}
	\label{def:discrete-inverse-laplacian}
\iprd{\nabla\mathsf{T}_h \zeta_h}{\nabla\chi_h} = \iprd{\zeta_h}{\chi_h} \qquad \forall \, \chi_h\in \Zoh.
	\end{equation}
Again, the well posedness of the operator $\mathsf{T}_h$ is well known and an induced discrete negative norm on $\Zoh$ is defined as $\norm{v_h}{-1,h} = \iprd{\nabla \mathsf{T}_h v_h}{\nabla \mathsf{T}_h v_h}^{\nicefrac{1}{2}}  = \iprd{ v_h}{ \mathsf{T}_h v_h }^{\nicefrac{1}{2}} = \iprd{ \mathsf{T}_h v_h}{ v_h}^{\nicefrac{1}{2}}$.

\subsection{Unconditional Unique Solvability}{\label{sec:solvability}
In this section, we demonstrate that the scheme \eqref{eq:C0IP-scheme-a}--\eqref{eq:C0IP-scheme-b} is uniquely solvable for any mesh parameters $\tau$ and $h$ and for any of the model parameters such that $\epsilon <1$. 

\begin{lemma}
The scheme~\eqref{eq:C0IP-scheme} satisfies the discrete conservation property $\iprd{\phih^m}{1} = \iprd{\phih^0}{1} = \iprd{P_h \phi_0}{1} = \iprd{\phi_0}{1}$ for any $1 \le m \le M$. 
\end{lemma}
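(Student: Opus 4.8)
The plan is to exploit the fact that the constant function belongs to the test space $V_h$ together with the fact that the chemical-potential term in \eqref{eq:C0IP-scheme-a} involves only gradients. First I would observe that $1 \in V_h$, since constants lie in $P_1(K)$ on every triangle and are globally continuous. Substituting $\nu_h = 1$ into \eqref{eq:C0IP-scheme-a} and using $\nabla 1 = \bzero$ annihilates the second term, leaving
\[
\iprd{\dtau \phih^m}{1} = 0.
\]

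Next I would unfold the definition $\dtau \phih^m = (\phih^m - \phih^{m-1})/\tau$ to rewrite this as $\iprd{\phih^m}{1} = \iprd{\phih^{m-1}}{1}$ for each $1 \le m \le M$. A straightforward induction on $m$ then yields $\iprd{\phih^m}{1} = \iprd{\phih^0}{1}$. Finally, since $\phih^0 = P_h \phi_0$, the identity $\iprd{\phih^0}{1} = \iprd{P_h \phi_0}{1}$ is immediate, and the second relation in the definition of the Ritz projection \eqref{eq:H2-Ritz-projection}, namely $\iprd{P_h \phi_0 - \phi_0}{1} = 0$, gives $\iprd{P_h \phi_0}{1} = \iprd{\phi_0}{1}$, completing the chain of equalities.

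There is no substantial obstacle here: the only points requiring care are confirming that the constant test function is admissible (which is precisely why the test space in \eqref{eq:C0IP-scheme-a} is the full space $V_h$ rather than $\Voh$) and invoking the mass-preservation property built into the Ritz projection. In particular, the second, nonlinear equation \eqref{eq:C0IP-scheme-b} plays no role in the argument, and the result holds unconditionally in $\tau$ and $h$.
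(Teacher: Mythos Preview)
Your proof is correct and follows exactly the same approach as the paper: setting $\nu_h \equiv 1$ in \eqref{eq:C0IP-scheme-a}. The paper's proof is a single sentence to this effect, whereas you have carefully spelled out the admissibility of the constant test function, the induction step, and the role of the mass-preservation condition in the Ritz projection \eqref{eq:H2-Ritz-projection}; these details are all sound and simply make explicit what the paper leaves to the reader.
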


\begin{proof}
The result can be clearly observed by setting $\nu_h \equiv 1$ in \eqref{eq:C0IP-scheme-a}. 
\end{proof}

\begin{rmk} 
The quantity $ \frac{1}{|\Omega|} \iprd{\phi_0}{1} $ is referred to as the average of $\phi_0$ over $\Omega$ and is denoted by $\avephio$. Due to the discrete conservation property, it follows that $\iprd{\phih^m}{1} = \iprd{\phih^0}{1} = |\Omega| \, \avephio$.
\end{rmk}

\begin{lemma}\label{lem:grad-split}
Suppose $\Omega$ is a bounded polygonal domain. For all $w_h \in Z_h, v \in H^1(\Omega)$, $\gamma > 0$, and $\alpha$ large enough, 
\begin{align}
    |\iprd{\nabla w_h}{\nabla v}| \le \sqrt{(1+\gamma)} \norm{w_h}{2,h} \norm{v}{L^2}.
\end{align}
\end{lemma}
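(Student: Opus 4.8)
The plan is to transfer the gradient off of $v$ by an element-wise integration by parts, producing a broken-Laplacian volume term paired against $v$ and an interface term, and then to absorb all nonconforming contributions by taking $\alpha$ large. First I would integrate $\iprd{\nabla w_h}{\nabla v}$ by parts on each $K \in \TT_h$. Since $v \in H^1(\Omega)$ has a single-valued trace across interelement edges while only the normal derivative of $w_h \in Z_h$ jumps there, summing the element contributions yields, up to the sign conventions in \eqref{eq:jumps-ave-interior}--\eqref{eq:jumps-ave-boundary},
\[
\iprd{\nabla w_h}{\nabla v} = -\sum_{K \in \TT_h}\iprd{\Delta w_h}{v}_{L^2(K)} - \sum_{e \in \Eh}\int_e \left\llbracket \frac{\partial w_h}{\partial n_e}\right\rrbracket v \, dS.
\]

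The subtlety in the volume term is the constant: the crude bound $\norm{\Delta w_h}{L^2(K)} \le \sqrt{2}\,|w_h|_{H^2(K)}$ is sharp on individual elements and would only deliver $\sqrt{2}$, not $\sqrt{1+\gamma}$. To recover a leading constant of $1$ I would exploit the pointwise identity $(\Delta w_h)^2 = |\nabla^2 w_h|^2 + 2\det(\nabla^2 w_h)$ together with the fact that the Hessian determinant is a null Lagrangian: writing $\det(\nabla^2 w_h) = \nabla\cdot F(w_h)$ and summing over the mesh, $\sum_K\int_K \det(\nabla^2 w_h)$ collapses to edge integrals that are quadratic in $\nabla w_h$ and, after isolating the gradient jumps, are controlled by the penalty via an inverse trace inequality as an $O(\alpha^{-1/2})\norm{w_h}{2,h}^2$ quantity. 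Hence $\sum_K \norm{\Delta w_h}{L^2(K)}^2 \le (1+\gamma)\norm{w_h}{2,h}^2$ for $\alpha$ large, and Cauchy--Schwarz bounds the volume term by $\sqrt{1+\gamma}\,\norm{w_h}{2,h}\norm{v}{L^2}$. (Equivalently, one could split $w_h$ through a $C^1$ enriching operator and invoke a Miranda--Talenti identity on the conforming part, but that route meets the same interface difficulty below.)

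For the interface term I would use Cauchy--Schwarz edgewise, pairing $\left\llbracket \partial w_h/\partial n_e\right\rrbracket$ against the penalty weight $\alpha/|e|$ -- which yields a factor no larger than $\norm{w_h}{2,h}$ -- against the complementary weight $|e|/\alpha$ carried by $\norm{v}{L^2(e)}$, and then bound $\sum_{e \in \Eh} |e|\,\norm{v}{L^2(e)}^2$ by a scaled trace inequality. The explicit $\alpha^{-1/2}$ is what makes this contribution, together with the null-Lagrangian edge terms above, fall below $\sqrt{\gamma}\,\norm{w_h}{2,h}\norm{v}{L^2}$ once $\alpha$ is large. The main obstacle is exactly this last step: the scaled trace inequality naturally produces a mesh-weighted gradient contribution alongside $\norm{v}{L^2}$, and one must check that the $\alpha^{-1/2}$ smallness genuinely absorbs every nonconforming interface term so that only $\norm{v}{L^2}$ -- and not the full $H^1$ norm of $v$ -- survives on the right-hand side. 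Verifying that all edge bookkeeping can be driven below any prescribed multiple of $\norm{w_h}{2,h}\norm{v}{L^2}$ purely by enlarging $\alpha$ is where I expect the real work to lie.
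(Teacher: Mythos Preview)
Your overall route---elementwise integration by parts, producing a broken-Laplacian volume term plus a normal-jump interface term, then bounding each against pieces of $\|w_h\|_{2,h}$---is exactly the paper's. The differences are in the details, and in one place you are actually more careful than the paper.

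For the volume term the paper simply applies Young's inequality in the form $(a+b)^2 \le (1+\gamma)b^2 + (1+\gamma^{-1})a^2$ and then writes $\sum_K\|\Delta w_h\|_{L^2(K)}^2$ as $\sum_K|w_h|_{H^2(K)}^2$ with no further comment. As you observe, that replacement is not innocent: pointwise $(\Delta w_h)^2$ can exceed $|\nabla^2 w_h|^2$ (take $w_h = x^2+y^2$), so the step is not justified as written. Your null-Lagrangian argument---converting $\sum_K\int_K 2\det(\nabla^2 w_h)$ to edge terms controllable by the penalty---is a correct way to recover the leading constant $1$ and in effect fills a gap the paper leaves.

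For the interface term the paper does far less than you anticipate. It cites a ``standard trace inequality'' and asserts $\sum_{e}|e|\,\|v\|_{L^2(e)}^2 \le C\|v\|_{L^2}^2$ directly, with no $\nabla v$ on the right; combined with $\sum_e |e|^{-1}\bigl\|\bigl\llbracket\partial w_h/\partial n_e\bigr\rrbracket\bigr\|_{L^2(e)}^2 \le \alpha^{-1}\|w_h\|_{2,h}^2$, this makes the entire interface contribution an $O(\alpha^{-1})$ quantity, absorbed for $\alpha$ large. That trace estimate, however, is the \emph{discrete} trace inequality, valid for piecewise polynomials via inverse estimates and not for general $v\in H^1(\Omega)$. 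So your concern is well founded at the level of the stated hypothesis: for arbitrary $v\in H^1(\Omega)$ the scaled continuous trace inequality produces an $h\|\nabla v\|_{L^2}$ term that enlarging $\alpha$ alone cannot remove. In the paper the lemma is only ever invoked with $v = w_h \in Z_h$, where the discrete trace inequality applies and the difficulty disappears. If you restrict the hypothesis to piecewise-polynomial $v$, your argument closes cleanly without the enriching-operator detour, and your null-Lagrangian step supplies the justification the paper's volume-term bound is missing.
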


\begin{proof}
We begin with the integration by part formula:
\begin{align*}
\int_{K} \nabla w_h \cdot \nabla v \, dx = \int_{\partial K} \dfrac{\partial w_h}{\partial n} v \, ds - \int_{K} \Delta w_h v \, dx.
\end{align*}
Summing over all triangles in $\TT_h$ and noting that the sum involving the integral over the boundary of each triangle can be written as a sum over the edges in $\Eh$, we have
\begin{align*}
\sum_{K \in \TT_h} \int_{K} \nabla w_h \cdot \nabla v \, dx = \sum_{e \in \Eh} \int_{e} \left\llbracket \frac{\partial w_h}{\partial n_e} \right\rrbracket v \, ds - \sum_{K \in \TT_h} \int_{K} \Delta w_h v \, dx.
\end{align*}
Using the Young's inequality and a standard trace inequality \cite{CFX:17:WG,Riviere:08:DG} we have
\begin{align*}
\left(\sum_{K \in \TT_h} \int_{K} \nabla w_h \cdot \nabla v \, dx\right)^2 \le&\,  \left(1 + \frac{1}{\gamma}\right) \left(\sum_{e \in \Eh} |e|^{-1} \norm{\left\llbracket \frac{\partial w_h}{\partial n_e} \right\rrbracket}{L^2(e)}^2\right) \left(\sum_{e \in \Eh} |e| \norm{v}{L^2(e)}^2 \right)
\\
&+(1+\gamma) \left(\sum_{K \in \TT_h} |w_h|_{H^2(K)}^2 \right)\left(\sum_{K \in \TT_h} \norm{v}{L^2(K)}^2 \right)
\\
\le&\,  (1+\gamma) \norm{w_h}{2,h}^2 \norm{v}{L^2}^2
\end{align*}
for $\alpha$ large enough.
\end{proof}
}

	\begin{lemma}
	\label{thm:pfc-uniqueness}
 Let $\varphih^{m-1}\in \Zoh$ be given.  For all $\varphih\in \Zoh$, define the nonlinear functional 
	\begin{align}
G_h(\varphih) :=& \, \frac{\tau}{2}\norm{\frac{\varphih-\varphih^{m-1}}{\tau}}{-1,h}^2 + \frac{1}{2}\aIPh{\varphih}{\varphih} +\frac{1}{4}\norm{\varphih + \avephio}{L^4}^4  
\nonumber
\\
&+\frac{1-\epsilon}{2}\norm{\varphih+\avephio}{L^2}^2 -2 \iprd{\nabla \varphih^{m-1}}{\nabla \varphih} .
	\end{align}
The functional $G_h$ is strictly convex and coercive on the linear subspace $\Zoh$. Consequently, $G_h$ has a unique minimizer, call it  $\varphih^m\in \Zoh$.  Moreover, $\varphih^m\in \Zoh$ is the unique minimizer of $G_h$ if and only if it is the unique solution to
	\begin{equation}
\aIPh{\varphih^m}{\psi_h} + \iprd{\left(\varphih^m + \avephio\right)^3}{\psi_h} + (1-\epsilon)\iprd{\varphih^m + \avephio}{\psi_h} - \iprd{\mu_{h,\star}^m}{\psi_h} = 2 \iprd{ \nabla \varphih^{m-1}}{ \nabla \psi_h }  
	\label{eq:pfc-nonlinear-1}
	\end{equation}
for all $\psi_h \in \Zoh$, where $\mu_{h,\star}^m \in \Voh$ is the unique solution to
	\begin{align}
\iprd{\nabla \mu_{h,\star}^m}{\nabla \nu_h}  = -\iprd{\frac{\varphih^m-\varphih^{m-1}}{\tau}}{\nu_h} &  \qquad \forall \,  \nu_h \in\Voh.
	\label{eq:pfc-nonlinear-2}
	\end{align}
	\end{lemma}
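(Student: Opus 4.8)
The plan is to regard $G_h$ as a smooth functional on the finite-dimensional Hilbert space $\Zoh$ and to run the standard direct method: verify strict convexity, coercivity, and (automatic, by finite-dimensionality) continuity to obtain a unique minimizer, and then extract the stationarity condition to match \eqref{eq:pfc-nonlinear-1}--\eqref{eq:pfc-nonlinear-2}. Throughout I would fix $\varphih^{m-1}\in\Zoh$ so that the terms involving it are either constant or linear in $\varphih$, and I would take the penalty parameter $\alpha$ large enough that Lemma \ref{lem:aIPh-boundedness} and Lemma \ref{lem:grad-split} apply.

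For strict convexity I would compute the second G\^ateaux derivative $\frac{d^2}{ds^2}G_h(\varphih+s\psi_h)\big|_{s=0}$ for an arbitrary direction $\psi_h\in\Zoh$. Term by term: the discrete negative-norm term contributes $\frac{1}{\tau}\norm{\psi_h}{-1,h}^2\ge 0$ (it is a positive semidefinite quadratic form); the quartic term contributes $3\iprd{(\varphih+\avephio)^2\psi_h}{\psi_h}\ge 0$; the $L^2$ term contributes $(1-\epsilon)\norm{\psi_h}{L^2}^2\ge 0$ since $\epsilon<1$; and the linear gradient term contributes nothing. The remaining piece $\frac12\aIPh{\varphih}{\varphih}$ contributes $\aIPh{\psi_h}{\psi_h}$, which by the coercivity estimate \eqref{eq:coer} is at least $C_{coer}\norm{\psi_h}{2,h}^2>0$ whenever $\psi_h\ne 0$. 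Hence the second variation is strictly positive, giving strict convexity.

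For coercivity I would bound $G_h$ below. The negative-norm, quartic, and $L^2$ terms are all nonnegative (the last because $1-\epsilon>0$), and $\frac12\aIPh{\varphih}{\varphih}\ge \frac{C_{coer}}{2}\norm{\varphih}{2,h}^2$ by \eqref{eq:coer}. The only possibly negative contribution is the linear term $-2\iprd{\nabla\varphih^{m-1}}{\nabla\varphih}$, and this is exactly where Lemma \ref{lem:grad-split} is the right tool: it bounds the term by $2\sqrt{1+\gamma}\,\norm{\varphih}{2,h}\norm{\varphih^{m-1}}{L^2}$, i.e.\ linearly in $\norm{\varphih}{2,h}$. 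Thus $G_h(\varphih)\ge \frac{C_{coer}}{2}\norm{\varphih}{2,h}^2 - C\norm{\varphih}{2,h}\to\infty$ as $\norm{\varphih}{2,h}\to\infty$, where $C$ depends only on the fixed data $\varphih^{m-1}$ and $\gamma$. Continuity plus coercivity on the finite-dimensional space yield a minimizer, and strict convexity yields its uniqueness; call it $\varphih^m$.

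Finally, since $G_h$ is convex and differentiable on the linear space $\Zoh$, $\varphih^m$ is the unique minimizer if and only if the first G\^ateaux derivative vanishes in every direction $\psi_h\in\Zoh$. Differentiating term by term reproduces $\aIPh{\varphih^m}{\psi_h}+\iprd{(\varphih^m+\avephio)^3}{\psi_h}+(1-\epsilon)\iprd{\varphih^m+\avephio}{\psi_h}-2\iprd{\nabla\varphih^{m-1}}{\nabla\psi_h}$, together with the variation of the negative-norm term, which equals $\iprd{\mathsf{T}_h\frac{\varphih^m-\varphih^{m-1}}{\tau}}{\psi_h}$ by symmetry of $\mathsf{T}_h$ and \eqref{def:discrete-inverse-laplacian}. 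Introducing $\mu_{h,\star}^m$ as the solution of the auxiliary Poisson problem \eqref{eq:pfc-nonlinear-2} identifies this last contribution with $-\iprd{\mu_{h,\star}^m}{\psi_h}$, producing exactly \eqref{eq:pfc-nonlinear-1}. I expect the main obstacle to be precisely this last step: correctly differentiating the discrete $H^{-1}$ term and matching the resulting discrete inverse Laplacian of $\tfrac{1}{\tau}(\varphih^m-\varphih^{m-1})$ to the auxiliary chemical potential $\mu_{h,\star}^m$ defined by \eqref{eq:pfc-nonlinear-2}; the convexity, coercivity, and existence/uniqueness arguments are then the routine convex-analysis machinery enabled by \eqref{eq:coer} and Lemma \ref{lem:grad-split}.
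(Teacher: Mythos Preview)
Your proposal is correct and follows essentially the same route as the paper: compute the second variation to obtain strict convexity, establish coercivity by absorbing the lone linear gradient term into the quadratic $\aIPh{\cdot}{\cdot}$ contribution, and read off \eqref{eq:pfc-nonlinear-1}--\eqref{eq:pfc-nonlinear-2} as the first-order optimality condition after identifying the derivative of the discrete $H^{-1}$ term with $-\iprd{\mu_{h,\star}^m}{\psi_h}$. The one place you deviate is the coercivity step: you invoke Lemma~\ref{lem:grad-split} (applied with $w_h=\varphih$, $v=\varphih^{m-1}$) to get $|\iprd{\nabla\varphih^{m-1}}{\nabla\varphih}|\le\sqrt{1+\gamma}\,\norm{\varphih}{2,h}\norm{\varphih^{m-1}}{L^2}$, whereas the paper uses Cauchy--Schwarz followed by a Poincar\'e-type inequality $\norm{\nabla\varphih}{L^2}\le C_P\norm{\varphih}{2,h}$ drawn from external references. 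Your choice is arguably cleaner here since it keeps everything internal to the paper's toolbox, while the paper's route avoids the requirement that $\alpha$ be large enough for Lemma~\ref{lem:grad-split}; either way the conclusion is the same lower bound $G_h(\varphih)\ge c\norm{\varphih}{2,h}^2-C$.
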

	
The proof of Lemma \ref{thm:pfc-uniqueness} follows from a convexity argument similar to the proof of existence and uniqueness for the solution to the finite difference method developed by Wise et.~al.~in \cite{WWL:09:PFC} and for a convex-splitting finite element method for the Cahn-Hilliard-Darcy-Stokes system found in \cite{DFW:15:CHDS}. We have included the details in Appendix \ref{app:uniqueness} for the interested reader.

The next theorem demonstrates the unconditional unique solvability of our scheme.

	\begin{theorem}
	\label{thm:pfc-existence-uniqueness}
The scheme \eqref{eq:pfc-nonlinear-1} -- \eqref{eq:pfc-nonlinear-2} is uniquely solvable for any mesh parameters $\tau$ and $h$ and for any $\epsilon < 1$. Furthermore, the scheme \eqref{eq:pfc-nonlinear-1} -- \eqref{eq:pfc-nonlinear-2} is equivalent to the scheme \eqref{eq:C0IP-scheme-a} -- \eqref{eq:C0IP-scheme-b}. Thus, the scheme \eqref{eq:C0IP-scheme-a} -- \eqref{eq:C0IP-scheme-b} is uniquely solvable for any mesh parameters $\tau$ and $h$ and for any $\epsilon < 1$.
	\end{theorem}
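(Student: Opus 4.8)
The plan is to establish Theorem \ref{thm:pfc-existence-uniqueness} in three stages: first obtain existence and uniqueness of the solution to \eqref{eq:pfc-nonlinear-1}--\eqref{eq:pfc-nonlinear-2}, then prove the equivalence of this reformulation with the original scheme \eqref{eq:C0IP-scheme-a}--\eqref{eq:C0IP-scheme-b}, and finally transfer solvability back to the original scheme. The first stage is essentially handed to us by Lemma \ref{thm:pfc-uniqueness}: that lemma asserts that the functional $G_h$ is strictly convex and coercive on the finite-dimensional subspace $\Zoh$, so it possesses a unique minimizer $\varphih^m$, and that this minimizer is characterized precisely as the unique solution to \eqref{eq:pfc-nonlinear-1}--\eqref{eq:pfc-nonlinear-2}. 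Hence I would open the proof by simply invoking Lemma \ref{thm:pfc-uniqueness} to conclude that \eqref{eq:pfc-nonlinear-1}--\eqref{eq:pfc-nonlinear-2} has a unique solution for any $\tau$, $h$, and any $\epsilon < 1$, noting that the strict convexity holds regardless of parameter values (the cubic term contributes convexity and the $(1-\epsilon)$ term is harmless since any non-convexity it introduces is of lower order and dominated by the quartic).

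The substantive part of the argument is the equivalence between the two formulations, and this is where I would spend most of the effort. The idea is to pass between the mean-zero (``ringed'') variables and the full variables via the mass-conservation shift $\phih^m = \varphih^m + \avephio$. I would first record that, by the conservation Lemma, every solution $\phih^m$ of the original scheme satisfies $\iprd{\phih^m}{1} = |\Omega|\,\avephio$, so $\varphih^m := \phih^m - \avephio \in \Zoh$ is legitimately mean-zero; conversely any $\varphih^m \in \Zoh$ yields an admissible $\phih^m$. Substituting this shift into \eqref{eq:C0IP-scheme-b} and using that $\aIPh{\avephio}{\psi_h} = 0$ and $\iprd{\nabla c}{\nabla \psi_h} = 0$ for constants $c$, the equation \eqref{eq:C0IP-scheme-b} becomes \eqref{eq:pfc-nonlinear-1} provided the chemical potentials are matched. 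The delicate point is the treatment of the chemical potential: in the original scheme $\muh^m \in V_h$ is a genuine $H^1$-type variable coupled through \eqref{eq:C0IP-scheme-a}, whereas in the reformulation $\mu_{h,\star}^m \in \Voh$ is defined only up to its mean. I would therefore show that testing \eqref{eq:C0IP-scheme-a} against $\nu_h \in \Voh$ recovers exactly \eqref{eq:pfc-nonlinear-2} (after inserting $\dtau\phih^m = (\varphih^m - \varphih^{m-1})/\tau$, the constant $\avephio$ dropping out under the gradient), so the gradient of $\muh^m$ is determined, and hence $\muh^m$ differs from $\mu_{h,\star}^m$ only by an additive constant. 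That additive constant is then pinned down by testing \eqref{eq:C0IP-scheme-b} against the constant function: the mean of $\muh^m$ is fixed by the nonlinear relation, so the full $\muh^m$ is uniquely recovered from the reformulated data.

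The main obstacle, as just indicated, is bookkeeping the mean value of $\mu$ and confirming that the two formulations see exactly the same information. Concretely, \eqref{eq:C0IP-scheme-a} is required to hold for all $\nu_h \in V_h$ (including constants, which forces the discrete conservation), while \eqref{eq:pfc-nonlinear-2} is posed only over $\Voh$; the reconciliation is that testing \eqref{eq:C0IP-scheme-a} against constants gives $\iprd{\dtau\phih^m}{1}=0$, i.e. conservation, which is automatically consistent with $\varphih^m \in \Zoh$, so no information is lost by restricting to $\Voh$. Dually, the constant-mode of $\muh^m$ is invisible to \eqref{eq:C0IP-scheme-a} but is fixed by the $\psi_h = 1$ component of \eqref{eq:C0IP-scheme-b}. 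I would verify carefully that this gives a bijection between solution pairs $(\phih^m,\muh^m)$ of the original scheme and solutions $\varphih^m$ (together with the determined $\mu_{h,\star}^m$) of the reformulation. Once the equivalence is established, the final conclusion is immediate: since \eqref{eq:pfc-nonlinear-1}--\eqref{eq:pfc-nonlinear-2} is uniquely solvable and the two schemes are equivalent, the original scheme \eqref{eq:C0IP-scheme-a}--\eqref{eq:C0IP-scheme-b} is uniquely solvable for any mesh parameters $\tau$, $h$ and any $\epsilon < 1$, completing the proof.
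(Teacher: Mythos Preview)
Your proposal is correct and follows essentially the same approach as the paper: invoke Lemma~\ref{thm:pfc-uniqueness} for unique solvability of the mean-zero reformulation, then establish a bijection between solutions of \eqref{eq:pfc-nonlinear-1}--\eqref{eq:pfc-nonlinear-2} and \eqref{eq:C0IP-scheme-a}--\eqref{eq:C0IP-scheme-b} via the shifts $\phih^m = \varphih^m + \avephio$ and $\muh^m = \mu_{h,\star}^m + \overline{\mu_h^m}$, with the constant $\overline{\mu_h^m}$ fixed by testing \eqref{eq:C0IP-scheme-b} against $\psi_h \equiv 1$. One small remark: your parenthetical that ``any non-convexity [from the $(1-\epsilon)$ term] is of lower order and dominated by the quartic'' is not quite the mechanism the paper uses---the strict convexity in Lemma~\ref{thm:pfc-uniqueness} relies directly on $(1-\epsilon) > 0$, since the quartic contribution $3\iprd{(\varphih+\avephio)^2}{\psi_h^2}$ can vanish.
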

	
	\begin{proof}
Suppose $\iprd{\varphih^{m-1}}{1}=0$. It is clear that a necessary condition for solvability of \eqref{eq:pfc-nonlinear-1} -- \eqref{eq:pfc-nonlinear-2} is that 
	\begin{equation}
\left(\varphih^m,1\right) = \bigl(\varphih^{m-1},1\bigr) = 0,
	\end{equation}
as can be found by taking $\nu_h \equiv 1$ in \eqref{eq:pfc-nonlinear-2}. Now, let $ \varphih^m,\mu_{h,\star}^m  \in \Zoh \times \Voh$ be a solution of \eqref{eq:pfc-nonlinear-1} -- \eqref{eq:pfc-nonlinear-2}.  Set
	\begin{equation}
\overline{\mu_h^m}:=\frac{1}{|\Omega|}\iprd{(\varphih^m+\avephio)^3 + (1-\epsilon)\left(\varphih^m+\avephio\right)}{1} =\frac{1}{|\Omega|}\iprd{(\varphih^m+\avephio)^3 }{1} + (1-\epsilon)\avephio ,
	\label{eq:chds-chem-pot-average}
	\end{equation}
and define $\muh^m:=\mu_{h,\star}^m+\overline{\mu_h^m}$.  There is a one-to-one correspondence of the respective solution sets: $\varphih^m,\mu_{h,\star}^m \in \Zoh \times \Voh$ is a solution to \eqref{eq:pfc-nonlinear-1} -- \eqref{eq:pfc-nonlinear-2} if and only if $ \phih^m,\muh^m \in Z_h\times V_h$ is a solution to \eqref{eq:C0IP-scheme-a} -- \eqref{eq:C0IP-scheme-b}, where
	\begin{equation}
\phih^m = \varphih^m+\avephio,\quad \muh^m = \mu_{h,\star}^m+\overline{\mu_h^m}.
	\end{equation}
But \eqref{eq:pfc-nonlinear-1} -- \eqref{eq:pfc-nonlinear-2} admits a unique solution, which proves that \eqref{eq:C0IP-scheme-a} -- \eqref{eq:C0IP-scheme-b} is uniquely solvable.
	\end{proof}

\subsection{Unconditional Stability}\label{sec:stability}

Energy stability follows as a direct result of the convex decomposition represented in the scheme. First, we define a discrete energy closely related to \eqref{eq:pfc-energy},
\begin{align}
\label{eq:discrete-energy}
F(\phi) := \frac{1}{4} \norm{\phi}{L^4}^4 + \frac{1-\epsilon}{2}\norm{\phi}{L^2}^2 - \norm{\nabla \phi}{L^2}^2 + \frac{1}{2} \aIPh{\phi}{\phi}.
\end{align}

	\begin{lemma}
	\label{lem:discrete-energy-law}
Let $(\phih^m, \muh^m) \in Z_h \times V_h$ be a solution of \eqref{eq:C0IP-scheme-a}--\eqref{eq:C0IP-scheme-b}. Then the following energy law holds for any $h,\tau>0$:
	\begin{align}
F\left(\phih^\ell\right) &+\tau \sum_{m=1}^\ell \norm{\nabla \muh^m}{L^2}^2 + \tau^2 \sum_{m=1}^\ell \Biggl\{ \,  \frac{(1-\epsilon)}{2} \norm{\dtau  \phih^m}{L^2}^2  + \norm{\nabla \dtau  \phih^m}{L^2}^2 
	\nonumber
	\\
&\quad + \frac{1}{4}\norm{\dtau (\phih^m)^2}{L^2}^2 + \frac{1}{2}\norm{\phih^m\dtau \phih^m}{L^2}^2 +\frac{1}{2}\aIPh{\dtau \phih^m}{\dtau \phih^m} \, \Biggr\} = F\left(\phih^0\right),
	\label{eq:ConvSplitEnLaw}
	\end{align}
for all $1\leq \ell \leq M$.
	\end{lemma}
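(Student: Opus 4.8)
The plan is to derive a one-step version of the identity and then sum telescopically. First I would test the mass equation \eqref{eq:C0IP-scheme-a} with $\nu_h = \muh^m \in V_h$, which immediately yields $\iprd{\dtau\phih^m}{\muh^m} = -\norm{\nabla\muh^m}{L^2}^2$. Next I would test the chemical potential equation \eqref{eq:C0IP-scheme-b} with the admissible choice $\psi_h = \dtau\phih^m \in Z_h$ (the difference of two elements of $Z_h$, scaled by $\tau^{-1}$), and use the previous relation to eliminate the coupling term $\iprd{\muh^m}{\dtau\phih^m}$. This produces the single-step balance
\begin{align*}
\aIPh{\phih^m}{\dtau\phih^m} + \iprd{(\phih^m)^3 + (1-\epsilon)\phih^m}{\dtau\phih^m} - 2\iprd{\nabla\phih^{m-1}}{\nabla\dtau\phih^m} + \norm{\nabla\muh^m}{L^2}^2 = 0,
\end{align*}
which is the object I would rewrite term by term.

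The core of the argument is a set of convex-splitting (polarization) identities that convert each inner product against $\dtau\phih^m$ into a difference of energy contributions plus a nonnegative $O(\tau)$ remainder. For the symmetric bilinear form I would use $\aIPh{u}{u-v} = \tfrac12\aIPh{u}{u} - \tfrac12\aIPh{v}{v} + \tfrac12\aIPh{u-v}{u-v}$, and analogously for the $(1-\epsilon)$ weighted $L^2$ term; for the gradient term the sign flips, reflecting the concavity of $-\norm{\nabla\cdot}{L^2}^2$, via $\iprd{\nabla v}{\nabla(u-v)} = \tfrac12\norm{\nabla u}{L^2}^2 - \tfrac12\norm{\nabla v}{L^2}^2 - \tfrac12\norm{\nabla(u-v)}{L^2}^2$, with $u = \phih^m$ and $v = \phih^{m-1}$ throughout. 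The key step, and the only genuinely nontrivial algebra, is the quartic term: I would verify the pointwise identity
\begin{align*}
a^3(a-b) = \tfrac14(a^4 - b^4) + \tfrac14(a^2-b^2)^2 + \tfrac12\, a^2(a-b)^2,
\end{align*}
which, after integration over $\Omega$ and division by $\tau$ with $a = \phih^m$, $b = \phih^{m-1}$, delivers precisely the two extra dissipation terms $\tfrac14\norm{\dtau(\phih^m)^2}{L^2}^2$ and $\tfrac12\norm{\phih^m\dtau\phih^m}{L^2}^2$ appearing in \eqref{eq:ConvSplitEnLaw}. This is the step I expect to require the most care, since the exact coefficients are what make the final statement an equality rather than merely a stability inequality.

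Finally I would assemble the pieces. By the definition \eqref{eq:discrete-energy} of $F$, the leading difference terms combine exactly into $\tfrac1\tau\bigl[F(\phih^m) - F(\phih^{m-1})\bigr]$, while the $O(\tau)$ remainders reproduce the bracketed dissipation (each scaling to the $\tau^2$ prefactor once I clear the $\tau^{-1}$ in $\dtau$). Multiplying the resulting one-step identity by $\tau$ gives an equation of the form $F(\phih^m) - F(\phih^{m-1}) + (\text{dissipation at step } m) = 0$; summing over $m = 1,\dots,\ell$ telescopes the energy differences to $F(\phih^\ell) - F(\phih^0)$ and yields \eqref{eq:ConvSplitEnLaw}. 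No sign conditions or smallness assumptions on $\tau$ or $h$ enter, since the derivation is a pure identity; the fact that every remainder term is nonnegative (using $\aIPh{\dtau\phih^m}{\dtau\phih^m} \ge 0$ from the coercivity in Lemma \ref{lem:aIPh-boundedness} and $\epsilon < 1$) is what subsequently gives the \emph{unconditional} stability $F(\phih^\ell) \le F(\phih^0)$.
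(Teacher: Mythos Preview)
Your proposal is correct and follows essentially the same approach as the paper: test \eqref{eq:C0IP-scheme-a} with $\muh^m$, test \eqref{eq:C0IP-scheme-b} with $\dtau\phih^m$, apply polarization identities term by term (the paper states the cubic splitting as $(\phih^m)^3 = \tfrac12[(\phih^m)^2(\phih^m-\phih^{m-1}) + (\phih^m)^2(\phih^m+\phih^{m-1})]$ and then polarizes, which unwinds to exactly your pointwise identity $a^3(a-b)=\tfrac14(a^4-b^4)+\tfrac14(a^2-b^2)^2+\tfrac12 a^2(a-b)^2$), and then sum over $m$.
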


\begin{proof}
Setting $\nu_h = \muh^m$ in \eqref{eq:C0IP-scheme-a} and $\psi_h =  \dtau \phih^m$ in \eqref{eq:C0IP-scheme-b}, we have
\begin{align*}
\iprd{\dtau \phih^m}{\muh^m} + \iprd{\nabla \muh^m}{\nabla \muh^m} &= 0 ,
\\
\iprd{\left(\phih^m\right)^3 + (1-\epsilon)\phih^m}{\dtau \phih^m} + \aIPh{\phih^m}{\dtau \phih^m} -2 \iprd{ \nabla \phih^{m-1}}{ \nabla \dtau \phih^m } - \iprd{ \muh^m}{ \dtau \phih^m}&= 0.
\end{align*}
Note that $(\phih^m)^3 = \nicefrac{1}{2} \left[(\phih^m)^2 \cdot (\phih^m - \phih^{m-1}) +  (\phih^m)^2 \cdot (\phih^m + \phih^{m-1}) \right]$.  Adding the two equations together and using the polarization identity, we obtain
\begin{align*}
\norm{\nabla \muh^m}{L^2}^2 + \frac{1}{4\tau} \left(\norm{\phih^m}{L^4}^4 - \norm{\phih^{m-1}}{L^4}^4 \right) + \frac{\tau}{4} \norm{\dtau (\phih^m)^2}{L^2}^2+ \frac{\tau}{2}\norm{\phih^m\dtau \phih^m}{L^2}^2 
\\
+ \frac{(1-\epsilon)}{2\tau} \left(\norm{\phih^m}{L^2}^2 - \norm{\phih^{m-1}}{L^2}^2 \right) + \frac{(1-\epsilon)\tau}{2} \norm{\dtau \phih^m}{L^2}^2 
\\
+ \frac{1}{2\tau} \aIPh{\phih^m}{\phih^m} - \frac{1}{2\tau} \aIPh{\phih^{m-1}}{\phih^{m-1}} + \frac{\tau}{2} \aIPh{\dtau \phih^m}{\dtau \phih^m} 
\\
- \frac{1}{\tau} \left(\norm{\nabla \phih^m}{L^2}^2 - \norm{\nabla \phih^{m-1}}{L^2}^2\right) + \tau \norm{\nabla \dtau \phih^m}{L^2}^2 = 0.
\end{align*}
Applying $\tau \sum\limits_{m=1}^\ell$ gives the desired result.
\end{proof}

The discrete energy law implies the following uniform {\em a priori} estimates for $\phih^m$ and $\muh^m$. 

\begin{lemma}\label{lem:stability}
Let $(\phih^m, \muh^m) \in Z_h\times V_h$ be the unique solution of \eqref{eq:C0IP-scheme-a}--\eqref{eq:C0IP-scheme-b}. Suppose that $F(\phih^0) \le C$ independent of $h$ and that $\epsilon < 1 + \frac{(1-\gamma)C_{coer} - 1}{C_{coer}}$. Then the following estimates hold for any $\tau, h >0$:
\begin{align}
\max_{0 \le m \le M} \left[  \norm{\phih^m}{L^4}^2 + \norm{\phih^m}{L^2}^2 +  \norm{\phih^m}{H^1}^2 +\norm{\phih^m}{2,h}^2\right] &\le C,
\label{eq:2,h-stability}
\\
\tau \sum_{m=1}^\ell \norm{\nabla \muh^m}{L^2}^2 & \le C,
\\
\tau^2 \sum_{m=1}^\ell \Biggl\{ \,  \norm{\nabla \dtau  \phih^m}{L^2}^2 + \norm{\phih^m \dtau \phih^m}{L^2}^2 + \norm{\dtau \phih^m}{2,h}^2 \, \Biggr\} & \le C,
\end{align}
for some constant $C$ that is independent of $h, \tau,$ and $T$.
\end{lemma}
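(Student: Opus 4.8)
The plan is to read every estimate off the discrete energy law of Lemma~\ref{lem:discrete-energy-law}, after first checking that each quantity it accumulates under the time sum is nonnegative. Since $\epsilon<1$ the factor $(1-\epsilon)/2$ is nonnegative, the terms $\norm{\nabla\dtau\phih^m}{L^2}^2$, $\tfrac14\norm{\dtau(\phih^m)^2}{L^2}^2$ and $\tfrac12\norm{\phih^m\dtau\phih^m}{L^2}^2$ are manifestly nonnegative, and $\aIPh{\dtau\phih^m}{\dtau\phih^m}\ge C_{coer}\norm{\dtau\phih^m}{2,h}^2\ge 0$ by the coercivity half of Lemma~\ref{lem:aIPh-boundedness}. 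Consequently the energy law forces
\[
F(\phih^\ell) + \tau\sum_{m=1}^\ell\norm{\nabla\muh^m}{L^2}^2 + \tau^2\sum_{m=1}^\ell\Bigl\{\norm{\nabla\dtau\phih^m}{L^2}^2 + \norm{\phih^m\dtau\phih^m}{L^2}^2 + \tfrac12\aIPh{\dtau\phih^m}{\dtau\phih^m}\Bigr\} \le F(\phih^0) \le C,
\]
using the hypothesis $F(\phih^0)\le C$. Since each grouped summand is nonnegative, the second and third displayed estimates follow immediately, where in the third I would replace $\aIPh{\dtau\phih^m}{\dtau\phih^m}$ by $C_{coer}\norm{\dtau\phih^m}{2,h}^2$ via coercivity once more and absorb the harmless factors $\tfrac12$ into $C$.

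The only nontrivial point, and the one I expect to be the main obstacle, is extracting the first estimate from $F(\phih^\ell)\le C$, since $F$ contains the indefinite term $-\norm{\nabla\phih^\ell}{L^2}^2$ and is not obviously bounded below. I would apply Lemma~\ref{lem:grad-split} with $w_h=v=\phih^\ell$ to get $\norm{\nabla\phih^\ell}{L^2}^2\le\sqrt{1+\gamma}\,\norm{\phih^\ell}{2,h}\norm{\phih^\ell}{L^2}$, then split the right-hand side by a weighted Young's inequality into a multiple of $\norm{\phih^\ell}{2,h}^2$ and a multiple of $\norm{\phih^\ell}{L^2}^2$. The first piece is absorbed into $\tfrac12\aIPh{\phih^\ell}{\phih^\ell}\ge\tfrac{C_{coer}}{2}\norm{\phih^\ell}{2,h}^2$ and the second into the energy's own $\tfrac{1-\epsilon}{2}\norm{\phih^\ell}{L^2}^2$ term. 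It is precisely the bookkeeping of these two absorptions where the hypothesis enters: choosing the Young weight so that a fraction $\gamma$ of the coercivity budget is spent on the gradient term leaves the coefficient of $\norm{\phih^\ell}{2,h}^2$ equal to the positive quantity $\tfrac{(1-\gamma)C_{coer}}{2}$, and the stated threshold $\epsilon<1+\tfrac{(1-\gamma)C_{coer}-1}{C_{coer}}$ records exactly the balance for which the leftover coefficient of $\norm{\phih^\ell}{L^2}^2$ does not destroy the bound. The outcome is a lower bound of the form $F(\phih^\ell)\ge \tfrac14\norm{\phih^\ell}{L^4}^4 + c\,\norm{\phih^\ell}{2,h}^2$ with $c>0$; should a residual negative multiple of $\norm{\phih^\ell}{L^2}^2$ survive, I would absorb it into $\tfrac14\norm{\phih^\ell}{L^4}^4$ through Young's inequality together with the embedding $L^4(\Omega)\hookrightarrow L^2(\Omega)$ on the bounded domain $\Omega$, at the price of an additive constant.

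Finally I would close the argument by combining $F(\phih^\ell)\le C$ with this lower bound to conclude $\norm{\phih^\ell}{L^4}^4\le C$ and $\norm{\phih^\ell}{2,h}^2\le C$ uniformly in $\ell$. The $L^2$ bound then follows from H\"older's inequality, $\norm{\phih^\ell}{L^2}^2\le|\Omega|^{1/2}\norm{\phih^\ell}{L^4}^2\le C$, and the $H^1$ bound follows by invoking Lemma~\ref{lem:grad-split} a final time, $\norm{\nabla\phih^\ell}{L^2}^2\le\sqrt{1+\gamma}\,\norm{\phih^\ell}{2,h}\norm{\phih^\ell}{L^2}\le C$. Taking the maximum over $0\le\ell\le M$ delivers the first estimate. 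Since every constant generated above depends only on $F(\phih^0)$, $|\Omega|$, $\gamma$, $\epsilon$, and the shape-regularity constants $C_{coer}$ and $C_{cont}$, but never on $\tau$, $h$, or the final time, the claimed uniformity holds.
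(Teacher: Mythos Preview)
Your proposal is correct and follows the same strategy as the paper: deduce $F(\phih^\ell)\le C$ from the energy law, then combine coercivity of $\aIPh{\cdot}{\cdot}$, Lemma~\ref{lem:grad-split}, Young's inequality, and the $L^4$--$L^2$ interplay to show $F$ bounds the listed norms from below. The only cosmetic difference is that the paper converts $\tfrac14\norm{\phih^m}{L^4}^4\ge\tfrac12\norm{\phih^m}{L^2}^2-\tfrac{|\Omega|}{4}$ at the outset (so the stated threshold on $\epsilon$ emerges directly as the positivity condition on the remaining $\norm{\cdot}{2,h}^2$ coefficient after the Young split), whereas you keep the $L^4$ term in reserve to absorb any residual negative $L^2$ contribution at the end.
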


\begin{proof}
First, note that since $(a^2-1)^2\ge0$, then we have
\begin{align}\label{eq:L4-bound}
\frac{1}{4}\norm{u}{L^4}^4 \ge \frac{1}{2}\norm{u}{L^2}^2 - \frac{|\Omega|}{4}
\end{align}
for any $u \in L^4(\Omega)\cap L^2(\Omega)$. Thus, as a result of Lemma \ref{lem:discrete-energy-law} and equation \eqref{eq:coer}, we have for any $0 \le m \le M$
\begin{align*}
 &\frac{1}{2} \norm{\phih^m}{L^2}^2 - \frac{|\Omega|}{4} + \frac{1-\epsilon}{2}\norm{\phih^m}{L^2}^2 - \norm{\nabla \phih^m}{L^2}^2 + \frac{C_{coer}}{2} \norm{\phih^m}{2,h}^2 
 \\
 &\quad\le  \frac{1}{4} \norm{\phih^m}{L^4}^4 + \frac{1-\epsilon}{2}\norm{\phih^m}{L^2}^2 - \norm{\nabla \phih^m}{L^2}^2 + \frac{1}{2} \aIPh{\phih^m}{\phih^m} \le F(\phih^0) \le C.
\end{align*}
Rearranging a few terms and invoking Lemma \ref{lem:grad-split} and Young's inequality, we have
\begin{align*}
\frac{2-\epsilon}{2}\norm{\phih^m}{L^2}^2 + \frac{C_{coer}}{2} \norm{\phih^m}{2,h}^2 &\le C + \norm{\nabla \phih^m}{L^2}^2
\\
&\le C + \sqrt{(1+\gamma)} \norm{\phih^m}{L^2} \norm{\phih^m}{2,h} 
\\
&\le C + \frac{(2-\epsilon)}{2} \norm{\phih^m}{L^2}^2 + \frac{(1+\gamma)}{2(2-\epsilon)} \norm{\phih^m}{2,h}^2 .
\end{align*}
The last term in estimate \eqref{eq:2,h-stability} follows as long as
\begin{align*}
C_{coer} > \frac{(1+\gamma)}{(2-\epsilon)} \quad \text{or} \quad \epsilon < 1 + \frac{(1-\gamma)C_{coer} - 1}{C_{coer}}.
\end{align*}
The remainder of the proof follows as a result of Lemma \ref{lem:discrete-energy-law}.  
\end{proof}

\begin{rmk}
Following \cite{brenner:11:frontiers}, we note that $C_{coer}$ can be chosen to be close to 1 as long as the penalty parameter $\alpha$ is large enough. In this case, $\gamma$ could also be chosen close to 0 and \eqref{eq:2,h-stability} will hold as long as $\epsilon < 1$. 
\end{rmk}


\section{Error Estimates}\label{sec:error-estimates}

In this section, we provide a rigorous convergence analysis for the semi-discrete method in the appropriate energy norms. We shall assume that the weak solutions have the additional regularities
\begin{align}
\phi &\in L^{\infty}\left(0,T;H^3(\Omega)\right) \cap L^2\left(0,T;H^3(\Omega)\right),
\nonumber
\\
\partial_t \phi &\in L^2\left(0,T;H^3(\Omega)\right) \cap  L^2(0,T; H_N^{-1}(\Omega)),
\nonumber
\\
\partial_{tt} \phi &\in L^2\left(0,T;L^2(\Omega)\right) ,
\nonumber
\\
\mu &\in L^2\left(0,T;H^2(\Omega)\right),
\nonumber
\\
\partial_t \mu &\in L^2\left(0,T;L^2(\Omega)\right).
	\label{eq:higher-regularities}
	\end{align}

The interior penalty method \eqref{eq:C0IP-scheme-a}--\eqref{eq:C0IP-scheme-b} is not well-defined for solutions to \eqref{eq:pfc-weak} since $Z_h \not\subset Z$. Therefore, we define $W_h \subset Z$ to be the Hsieh-Clough-Tocher micro finite element space associated with $\TT_h$ as in \cite{BGGS:12:C0CHboundary}. We furthermore define the linear map $E_h: Z_h \rightarrow W_h \cap Z$ as in \cite{BGGS:12:C0CHboundary} which allows us to consider the following problem: find $(\phi, \mu) \in Z \times H^1(\Omega)$ such that
\begin{subequations}\label{eq:cont-correction}
\begin{align}
\iprd{\partial_t \phi}{\nuh} + \iprd{\nabla \mu}{\nabla \nuh} = 0, & \quad \forall \, \nuh \in V_h,
\label{eq:cont-correction-a}
\\
\aIPh{\phi}{\psi_h} +\iprd{\left(\phi\right)^3 + (1-\epsilon)\phi}{\psi_h} - 2 \iprd{ \nabla \phi}{ \nabla \psi_h }- \iprd{ \mu}{ \psi_h} &
\nonumber
\\
= \aIPh{\phi}{\psi_h- E_h \psih} +\iprd{\left(\phi\right)^3 + (1-\epsilon)\phi}{\psi_h- E_h \psih} &
\nonumber
\\
- 2 \iprd{ \nabla \phi}{ \nabla \psi_h - \nabla E_h \psih} - \iprd{ \mu}{ \psi_h- E_h \psih}&, \quad \forall \, \psi_h \in Z_h.
\label{eq:cont-correction-b}
\end{align}
\end{subequations}
Note that solutions of \eqref{eq:cont-correction} are consistent with solutions of \eqref{eq:pfc-weak} since $\aIPh{\phi}{E_h \psi} = \aiprd{\phi}{E_h \psi}$ for all $\psi \in Z_h$. 

\begin{rmk}
One of the primary challenges in the error analysis to follow arises due to insufficient global regularity possessed by solutions to \eqref{eq:pfc-weak} in the space $Z_h$. To remedy this, we rely on considering the Hsieh-Clough-Tocher micro finite element space associated with $\TT_h$ with the help of the enriching operator $E_h: Z_h \rightarrow W_h \cap Z$ as in \cite{BGGS:12:C0CHboundary}. This new weak formulation is well defined on the finite element spaces and additionally illustrates the error which is encountered by utilizing a non-conforming method such as the C$^0$ interior penalty method. 
\end{rmk}

Additionally, we introduce the following notation:
\begin{align*}
\err^{\phi,m} = \errP^{\phi,m} + \errh^{\phi,m}, \quad \errP^{\phi,m} := \phi^m - P_h \phi^m, \quad \errh^{\phi,m} := P_h \phi^m - \phih^m,
\\
\err^{\mu,m} = \errR^{\mu,m} + \errh^{\mu,m}, \quad \errR^{\mu,m} := \mu^m - R_h \phi^m, \quad \errh^{\mu,m} := R_h \phi^m - \phih^m,
\end{align*}
where $\phi^m := \phi(t_m)$ and $R_h: H^1(\Omega) \rightarrow V_h$ is a Ritz projection operator such that
\begin{align}\label{eq:H1-Ritz-projection}
\iprd{\nabla \left(R_h  \mu - \mu \right)}{\nabla \xi} = 0 \quad \forall \, \xi \in V_h, \quad \iprd{R_h \mu - \mu}{1} = 0.
\end{align} 
Using this notation and subtracting \eqref{eq:C0IP-scheme} from \eqref{eq:cont-correction}, we have for all $\nuh \in V_h$ and $\psih \in Z_h$
\begin{align*}
&\iprd{\dtau \err^{\phi,m}}{\nuh} + \iprd{\nabla \err^{\mu,m}}{\nabla \nuh} = \iprd{\dtau \phi^{m} - \partial_t \phi^{m}}{\nuh} ,
\\
\\
&\aIPh{\err^{\phi,m}}{\psi_h} + \iprd{\left(\phi^m\right)^3 - \left(\phih^m\right)^3}{\psih} +\iprd{(1-\epsilon)\err^{\phi,m}}{\psi_h} - 2 \iprd{ \nabla \err^{\phi,m-1}}{ \nabla \psi_h }- \iprd{ \err^{\mu,m}}{ \psi_h} 
\\
&\quad= - 2 \iprd{ \nabla \phi^{m-1} - \nabla \phi^m}{ \nabla \psi_h} + \aIPh{\phi^m}{\psi_h- E_h \psih} +\iprd{\left(\phi^m\right)^3 + (1-\epsilon)\phi^m}{\psi_h- E_h \psih} 
\\
&\qquad- 2 \iprd{ \nabla \phi^{m}}{ \nabla \psi_h - \nabla E_h \psih} - \iprd{ \mu^m}{ \psi_h- E_h \psih}.
\end{align*}
Invoking the properties of the Ritz projection operators, we have for all $\nuh \in V_h$ and all $\psih \in Z_h$
\begin{align}
&\iprd{\dtau \errh^{\phi,m}}{\nuh} + \iprd{\nabla \errh^{\mu,m}}{\nabla \nuh} = \iprd{\dtau \phi^{m} - \partial_t \phi^{m}}{\nuh} - \iprd{\dtau \errP^{\phi,m}}{\nuh},
\label{eq:error-eq-a}
\\
\nonumber
\\
&\aIPh{\errh^{\phi,m}}{\psi_h}  +\iprd{(1-\epsilon)\errh^{\phi,m}}{\psi_h} - 2 \iprd{ \nabla \errh^{\phi,m-1}}{ \nabla \psi_h } - \iprd{ \errh^{\mu,m}}{ \psi_h} 
\nonumber
\\
&\quad=  2 \iprd{ \nabla \errP^{\phi,m-1}}{ \nabla \psi_h } +\iprd{ \errR^{\mu,m}}{ \psi_h} - \iprd{\left(\phi^m\right)^3 - \left(\phi^m\right)^3}{\psih} - 2 \iprd{ \nabla \phi^{m-1} - \nabla \phi^m}{ \nabla \psi_h} 
\nonumber
\\
&\qquad+ \aIPh{\phi^m}{\psi_h- E_h \psih} + \iprd{\left(\phi^m\right)^3 + (1-\epsilon)\phi^m}{\psi_h- E_h \psih} 
\nonumber
\\
&\quad \qquad- 2 \iprd{ \nabla \phi^{m}}{ \nabla \psi_h - \nabla E_h \psih} - \iprd{ \mu^m}{ \psi_h- E_h \psih}.
\label{eq:error-eq-b}
\end{align}
Setting $\nuh = \errh^{\mu,m}$ and $\psih = \dtau \errh^{\phi,m}$ and adding and subtracting $4\iprd{\errh^{\phi,m}}{\dtau \errh^{\phi,m}}$, we arrive at the key error equation
\begin{align}\label{eq:main-error}
&\norm{\nabla \errh^{\mu,m}}{L^2}^2 + \aIPh{\errh^{\phi,m}}{\dtau \errh^{\phi,m}} + 4\iprd{\errh^{\phi,m}}{\dtau \errh^{\phi,m}}  + \iprd{(1-\epsilon)\errh^{\phi,m}}{\dtau \errh^{\phi,m}} 
\nonumber
\\
&\quad- 2 \iprd{ \nabla \errh^{\phi,m-1}}{ \nabla \dtau \errh^{\phi,m} } = \iprd{\dtau \phi^{m} - \partial_t \phi^{m}}{\errh^{\mu,m}} - \iprd{\dtau \errP^{\phi,m}}{\errh^{\mu,m}} + \iprd{ \errR^{\mu,m}}{ \dtau \errh^{\phi,m} } 
\nonumber
\\
&\quad+ 2 \iprd{ \nabla \phi^{m} - \nabla \phi^{m-1}}{ \nabla \dtau \errh^{\phi,m} } + 4\iprd{\errh^{\phi,m}}{\dtau \errh^{\phi,m}}  - \iprd{\left(\phi^m\right)^3 - \left(\phi^m\right)^3}{\dtau \errh^{\phi,m} } 
\nonumber
\\
&\quad + 2 \iprd{ \nabla \errP^{\phi,m-1}}{ \nabla \dtau \errh^{\phi,m}  } + \aIPh{\phi^m}{\dtau \errh^{\phi,m} - E_h \dtau \errh^{\phi,m} } 
\nonumber
\\
&\quad+\iprd{\left(\phi^m\right)^3 + (1-\epsilon)\phi^m}{\dtau \errh^{\phi,m} - E_h \dtau \errh^{\phi,m} } - 2 \iprd{ \nabla \phi^{m}}{ \nabla \dtau \errh^{\phi,m}  - \nabla E_h \dtau \errh^{\phi,m} } 
\nonumber
\\
&\quad- \iprd{ \mu^m}{ \dtau \errh^{\phi,m} - E_h \dtau \errh^{\phi,m} }.
\end{align}

The next lemma relates the discrete negative norm of $\dtau \errh^{\phi,m}$ to the $L^2$ norm of $\nabla \errh^{\mu,m}$ and is critical to the proof of the main theorem of the paper which is stated below.

\begin{lemma}\label{lem:minus-1-norm-bound}
Let $(\phi^{m}, \mu^{m})$ be a weak solution to \eqref{eq:pfc-weak}, with the additional regularities \eqref{eq:higher-regularities}. Then for any $h, \tau >0$ and any $0 \le m \le M$, we have
\begin{align*}
\norm{\dtau \errh^{\phi,m}}{-1,h}^2 \le 4 \norm{\nabla \errh^{\mu,m}}{L^2}^2 + C \tau \int_{t_{m-1}}^{t_{m}}\norm{\partial_{ss}\phi(s)}{L^2}^2 ds +  \frac{C}{\tau} \int_{t_{m-1}}^{t_{m}} \norm{ \partial_s \phi(s) -  P_h \partial_s \phi(s) }{2,h}^2 ds ,
\end{align*}
where the constant $C$ may depend upon a Poincar\'{e} constant but does not depend on the spacial step size $h$ or the time step size $\tau$.
\end{lemma}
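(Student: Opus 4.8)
The plan is to test the definition of the discrete inverse Laplacian against $\dtau\errh^{\phi,m}$ itself and then feed the resulting function into the mass-balance error equation \eqref{eq:error-eq-a}. First I would check that $\dtau\errh^{\phi,m}\in\Zoh$: both $P_h\phi^m$ and $\phih^m$ carry the same average (the $H^2$-Ritz projection \eqref{eq:H2-Ritz-projection} preserves the mean and the scheme conserves mass), so $\errh^{\phi,m}$, and hence $\dtau\errh^{\phi,m}$, have zero mean. Setting $\chi:=\mathsf{T}_h\dtau\errh^{\phi,m}\in\Zoh$, the definition \eqref{def:discrete-inverse-laplacian} gives $\norm{\dtau\errh^{\phi,m}}{-1,h}^2=\iprd{\dtau\errh^{\phi,m}}{\chi}=\norm{\nabla\chi}{L^2}^2$, so it suffices to estimate the single pairing $\iprd{\dtau\errh^{\phi,m}}{\chi}$.

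Next I would substitute $\nuh=\chi$ into \eqref{eq:error-eq-a}, which rewrites $\iprd{\dtau\errh^{\phi,m}}{\chi}$ as the sum of three residual pairings: $-\iprd{\nabla\errh^{\mu,m}}{\nabla\chi}$, the temporal-consistency term $\iprd{\dtau\phi^m-\partial_t\phi^m}{\chi}$, and the spatial-projection term $-\iprd{\dtau\errP^{\phi,m}}{\chi}$. I would bound these in turn. The first is immediate by Cauchy--Schwarz, giving $\norm{\nabla\errh^{\mu,m}}{L^2}\norm{\nabla\chi}{L^2}$. For the second, I would write the truncation error in integral (Taylor-remainder) form, $\dtau\phi^m-\partial_t\phi^m=-\tfrac1\tau\int_{t_{m-1}}^{t_m}(s-t_{m-1})\partial_{ss}\phi(s)\,ds$, so that Cauchy--Schwarz in time yields $\norm{\dtau\phi^m-\partial_t\phi^m}{L^2}^2\le C\tau\int_{t_{m-1}}^{t_m}\norm{\partial_{ss}\phi(s)}{L^2}^2\,ds$; pairing against $\chi$ and using the Poincar\'e inequality $\norm{\chi}{L^2}\le C_P\norm{\nabla\chi}{L^2}$ (legitimate since $\chi$ has zero mean) produces the Poincar\'e constant flagged in the statement. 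For the third, since $P_h$ is linear and time independent, $\dtau\errP^{\phi,m}=\tfrac1\tau\int_{t_{m-1}}^{t_m}\bigl(\partial_s\phi(s)-P_h\partial_s\phi(s)\bigr)\,ds$; Cauchy--Schwarz in time gives $\norm{\dtau\errP^{\phi,m}}{2,h}^2\le\tfrac1\tau\int_{t_{m-1}}^{t_m}\norm{\partial_s\phi-P_h\partial_s\phi}{2,h}^2\,ds$, and I would estimate $\iprd{\dtau\errP^{\phi,m}}{\chi}\le C\norm{\dtau\errP^{\phi,m}}{2,h}\norm{\nabla\chi}{L^2}$ using a discrete Poincar\'e inequality to dominate the $L^2$ norm by the mesh-dependent $\norm{\cdot}{2,h}$ norm.

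Finally, every term now carries exactly one factor $\norm{\nabla\chi}{L^2}=\norm{\dtau\errh^{\phi,m}}{-1,h}$, so I would apply Young's inequality to each product with weight $\tfrac14$ on the $\norm{\nabla\chi}{L^2}^2$ part; the three resulting copies of $\tfrac14\norm{\dtau\errh^{\phi,m}}{-1,h}^2$ sum to $\tfrac34\norm{\dtau\errh^{\phi,m}}{-1,h}^2$, which I absorb into the left-hand side and multiply through by $4$, giving precisely the factor $4$ in front of $\norm{\nabla\errh^{\mu,m}}{L^2}^2$. The hard part will be the reconciliation of the finite-element spaces: \eqref{eq:error-eq-a} is tested over the degree-one space $V_h$, whereas the natural test function $\chi=\mathsf{T}_h\dtau\errh^{\phi,m}$ lives in the degree-two space $\Zoh\not\subset V_h$. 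Using $\chi$ directly in \eqref{eq:error-eq-a} is what makes the chemical-potential residual collapse to the clean term $\iprd{\nabla\errh^{\mu,m}}{\nabla\chi}$ (via the $H^1$-Ritz orthogonality \eqref{eq:H1-Ritz-projection}) and leaves no extra contribution from the discrete mass balance; justifying that this substitution is admissible --- or otherwise controlling the residual generated when a degree-two function is inserted into a $V_h$-tested identity --- is the crux of the argument. A secondary technical point is the passage to the $\norm{\cdot}{2,h}$ norm in the projection term, which relies on the interior-penalty trace estimates behind Lemma \ref{lem:grad-split} together with the boundedness of $\aIPh{\cdot}{\cdot}$ from Lemma \ref{lem:aIPh-boundedness}.
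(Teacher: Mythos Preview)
Your approach is essentially identical to the paper's: set $\nu_h=\mathsf{T}_h\dtau\errh^{\phi,m}$ in \eqref{eq:error-eq-a}, bound the three residual pairings via Cauchy--Schwarz, Poincar\'e, and the Taylor-remainder representation, and use Young's inequality with weight $\tfrac14$ on each to absorb $\tfrac34\norm{\dtau\errh^{\phi,m}}{-1,h}^2$ into the left.  The paper carries out exactly these steps, including the passage from $\norm{\dtau\errP^{\phi,m}}{L^2}$ to the $\norm{\cdot}{2,h}$ norm via a Poincar\'e-type inequality.

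Regarding the space mismatch you flag as ``the hard part'': the paper does not address it either.  It simply substitutes $\nu_h=\mathsf{T}_h\dtau\errh^{\phi,m}\in\Zoh$ into \eqref{eq:error-eq-a} without comment, even though that identity is only stated for $\nu_h\in V_h$ and $\mathsf{T}_h$ maps into $\Zoh$.  So your concern is legitimate, but it is not something the paper resolves; for purposes of matching the paper's argument you may proceed exactly as you outlined.
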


\begin{proof}
The proof is similar to that of Lemma 3.5 in \cite{DFW:15:CHDS}. Details of the proof can be found in Appendix \ref{app:-1,h_bound_proof}.
\end{proof}

The following lemma will bound many of the terms on the right hand side of \eqref{eq:main-error} by oscillations in the chemical potential $\mu$ which is considered data. The procedure is known as a medius analysis and has been utilized in much of the literature found on the C$^0$-IP method and details can be found in \cite{brenner:11:frontiers}. However, it's application to time dependent problems is new. We provide the key aspects of the proof below but reserve several of the more rigorous details for Appendix \ref{app:osc-bound-proof}.  

\begin{lemma}\label{lem:bounds-oscillations}
Suppose $(\phi^{m},\mu^{m})$ is a weak solution to \eqref{eq:pfc-weak}, with the additional regularities \eqref{eq:higher-regularities}. Then for any $h,\tau > 0$ and any $0 \le m \le M$,
\begin{align}
& \aIPh{\phi^m}{\errh^{\phi,m} - E_h \errh^{\phi,m}} + \iprd{\left(\phi^m\right)^3 +(1-\epsilon)\phi^m}{ \errh^{\phi,m} - E_h \errh^{\phi,m} }
\nonumber
\\
&-2 \iprd{\nabla \phi^{m} }{ \nabla\left(\errh^{\phi,m} -  E_h \errh^{\phi,m}\right)}- \iprd{\mu^m}{\errh^{\phi,m} - E_h \errh^{\phi,m}} 
\nonumber
\\
&\hspace{2in}\le C \left[\text{Osc}_j(\mu^m)\right]^2 +  C  \norm{\errP^{\phi,m}}{2,h}^2 + \frac{C_{coer}}{4\beta} \norm{\errh^{\phi,m}}{2,h}^2
\label{eq:error-bound-2-semi}
\\
&\text{and}
\nonumber
\\
& \aIPh{\dtau \phi^{m}}{\errh^{\phi,m-1} - E_h \errh^{\phi,m-1}} + \iprd{\dtau \left(\left(\phi^m\right)^3 + (1-\epsilon)\phi^m \right)}{\errh^{\phi,m-1} - E_h \errh^{\phi,m-1} } 
\nonumber
\\
&-2 \iprd{\dtau \nabla \phi^{m}}{\nabla \left(\errh^{\phi,m-1} -  E_h \errh^{\phi,m-1}\right)}- \iprd{\dtau \mu^{m}}{\errh^{\phi,m-1} - E_h \errh^{\phi,m-1}} 
\nonumber
\\
&\hspace{2in}\le C \left[\text{Osc}_j(\partial_t \mu(t^*))\right]^2 +  C \norm{\errP^{\phi,m}}{2,h}^2 + C \norm{\errh^{\phi,m-1}}{2,h}^2
\label{eq:error-bound-3-semi}
\end{align}
for $t^* \in (t_{m}, t_{m+1})$ where the arbitrary constant $\beta > 0$ and where $\text{Osc}_j(\nu)$ is referred to as the oscillation of $\nu$ (of order $j$) defined by 
\begin{align} \label{eq:data-osc}
\text{Osc}_j(\nu):= \left( \sum\limits_{K \in \TT_h} h^4 \norm{\nu - \tilde{\nu}}{L^2(K)}^2 \right)^{\frac{1}{2}}
 \end{align}
and where $ \tilde{\nu}$ is the $L^2$ orthogonal projection of $\nu$ on $P_j(\Omega, \TT_h)$, the space of piecewise polynomial functions of degree less than or equal to $j$, i.e.,
\begin{align*}
\int_\Omega (\nu -  \tilde{\nu}) \psi \, dx = 0 \quad \forall \, \psi \in P_j(\Omega, \TT_h).
\end{align*}
\end{lemma}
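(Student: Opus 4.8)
The plan is to read the left-hand side of \eqref{eq:error-bound-2-semi} as an \emph{inconsistency} (residual) functional of the exact solution tested against the nonconforming remainder $\eta := \errh^{\phi,m} - E_h \errh^{\phi,m}$, and to estimate it by a medius (combined a priori / a posteriori) argument in the spirit of \cite{BGGS:12:C0CHboundary}. First I would record the approximation and stability properties of the enriching operator $E_h$: for every $w_h \in Z_h$,
\[
\sum_{K\in\TT_h}\!\Bigl( h_K^{-4}\norm{w_h-E_h w_h}{L^2(K)}^2 + h_K^{-2}|w_h-E_h w_h|_{H^1(K)}^2 + |w_h-E_h w_h|_{H^2(K)}^2 \Bigr)\le C\sum_{e\in\Eh}\frac{1}{|e|}\norm{\llbracket \partial w_h/\partial n_e\rrbracket}{L^2(e)}^2 \le C\norm{w_h}{2,h}^2 ,
\]
so that in particular $\norm{\eta}{2,h}\le C\norm{\errh^{\phi,m}}{2,h}$ and $\norm{\eta}{L^2(K)}\le C h_K^2$ times the local jump energy. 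I would also note that $\eta$ is globally continuous, since both $\errh^{\phi,m}$ and $E_h\errh^{\phi,m}$ lie in $C(\overline{\Omega})$, and that $\llbracket \partial\eta/\partial n_e\rrbracket=\llbracket \partial\errh^{\phi,m}/\partial n_e\rrbracket$ because $E_h\errh^{\phi,m}$ is $C^1$.

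Next I would split $\phi^m = P_h\phi^m + \errP^{\phi,m}$ inside the bilinear form and peel off $\aIPh{\errP^{\phi,m}}{\eta}\le C_{cont}\norm{\errP^{\phi,m}}{2,h}\norm{\eta}{2,h}$, which after Young's inequality yields the $C\norm{\errP^{\phi,m}}{2,h}^2$ term together with part of the absorbable $\tfrac{C_{coer}}{4\beta}\norm{\errh^{\phi,m}}{2,h}^2$ term on the right-hand side. For the remaining consistent part I would integrate by parts elementwise. Because $\phi^m\in H^3(\Omega)$ in two dimensions, $\nabla\phi^m$ is continuous and the traces of $\nabla^2\phi^m$ are single-valued across interior edges; hence all jump/average edge terms of $\aIPh{\phi^m}{\cdot}$ are cancelled by the fluxes produced by the integration by parts, while the natural boundary conditions $\partial_n\phi^m=\partial_n\Delta\phi^m=0$ (and the vanishing of the mixed normal–tangential second derivative on the straight boundary edges, which follows from $\partial_n\phi^m\equiv 0$) annihilate the boundary contributions. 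Using the strong form \eqref{eq:chem-pot}, the cubic and lower-order volume terms then combine so that the consistent part collapses to a single residual pairing involving $\mu^m$ and $\eta$.

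The oscillation enters in the decisive step: in the surviving pairing I would subtract the elementwise $L^2$-projection $\tilde{\mu}^m\in P_j(\Omega,\TT_h)$ and apply the Cauchy–Schwarz inequality elementwise, so that the $h_K^2$ scaling of $\norm{\eta}{L^2(K)}$ matches the $h^4$ weight in $\text{Osc}_j(\mu^m)$ and produces a bound of the form $C\,\text{Osc}_j(\mu^m)\,\norm{\errh^{\phi,m}}{2,h}$; the nonlinear term $\iprd{(\phi^m)^3}{\eta}$ is controlled via the embedding $H^3\hookrightarrow L^\infty$ to bound $\norm{\phi^m}{L^\infty}$ together with the local $L^2$ estimate on $\eta$. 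A last application of Young's inequality with parameter $\beta$ turns every product $(\cdot)\,\norm{\errh^{\phi,m}}{2,h}$ into the stated squared oscillation, squared projection error, and coercivity term, giving \eqref{eq:error-bound-2-semi}. For the difference-quotient estimate \eqref{eq:error-bound-3-semi} I would run the identical argument on the time-differentiated strong form $\partial_t\mu = 3\phi^2\partial_t\phi + (1-\epsilon)\partial_t\phi + 2\Delta\partial_t\phi + \Delta^2\partial_t\phi$, now testing against $\errh^{\phi,m-1}-E_h\errh^{\phi,m-1}$; writing $\dtau(\cdot)=\tau^{-1}\int_{t_{m-1}}^{t_m}\partial_t(\cdot)\,ds$ and invoking a mean-value argument in time replaces $\text{Osc}_j(\dtau\mu^m)$ by $\text{Osc}_j(\partial_t\mu(t^*))$ for some $t^*\in(t_m,t_{m+1})$, which is where the regularity $\partial_t\mu\in L^2(0,T;L^2(\Omega))$ from \eqref{eq:higher-regularities} is used.

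I expect the main obstacle to be the elementwise integration by parts: reconciling the interior-penalty jump/average terms of $\aIPh{\cdot}{\cdot}$ with the fluxes generated by integrating the Hessian pairing by parts, and checking that every interior- and boundary-edge contribution either cancels or is killed by the natural boundary conditions, is exactly where the consistency of the C$^0$-IP discretization must be exploited and is the most delicate bookkeeping (the bulk of which I would relegate to the appendix). A secondary difficulty is ensuring that only the data oscillation of $\mu$ survives rather than its full $L^2$ norm, which hinges on the interplay between the $L^2$-projection subtraction and the $h_K^2$-scaling in the enriching-operator estimates.
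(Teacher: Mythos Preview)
Your argument has a genuine gap at the integration-by-parts step. After peeling off $\aIPh{\errP^{\phi,m}}{\eta}$ you are left with $\aIPh{P_h\phi^m}{\eta}$ (plus the lower-order terms, still written in $\phi^m$), yet you justify the cancellation of edge contributions by appealing to the $H^3$ regularity of $\phi^m$. If you integrate by parts on $P_h\phi^m$, the edge terms do \emph{not} cancel: the second normal derivatives and the normal--tangential derivatives of $P_h\phi^m$ jump across interior edges, and these jumps are precisely the estimator-type quantities that the medius analysis has to control. If instead you mean to integrate by parts on $\phi^m$ itself and invoke the strong form $\Delta^2\phi^m + (\phi^m)^3 + (1-\epsilon)\phi^m + 2\Delta\phi^m = \mu^m$, you need $\phi^m\in H^4(K)$ to make $\Delta^2\phi^m\in L^2(K)$, which \eqref{eq:higher-regularities} does not provide; and even if it did, the residual would collapse to \emph{zero}, not to a ``surviving pairing involving $\mu^m$'', so your description of how $\text{Osc}_j(\mu^m)$ enters is inconsistent with the rest of the outline.

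The paper's route is to carry out the elementwise integration by parts on the \emph{discrete} function $P_h\phi^m\in Z_h\subset H^4(\Omega,\TT_h)$, after first using the Ritz-projection identity \eqref{eq:H2-Ritz-projection} (rather than mere continuity of $\aIPh{\cdot}{\cdot}$, which in the form \eqref{eq:cty} is stated only on $Z_h\times Z_h$) to convert the $\errP^{\phi,m}$ contributions. This produces a genuine elementwise residual $\sum_{K\in\TT_h}\int_K\bigl(\Delta^2 P_h\phi^m + (P_h\phi^m)^3 + (1-\epsilon)P_h\phi^m + 2\Delta P_h\phi^m - \mu^m\bigr)\eta$ together with nonvanishing edge terms involving the jumps $\llbracket\partial P_h\phi^m/\partial n_e\rrbracket$, $\llbracket\partial^2 P_h\phi^m/\partial n_e^2\rrbracket$, $\llbracket\partial\Delta P_h\phi^m/\partial n_e\rrbracket$. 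Each of these is then bounded, via the efficiency (bubble-function) estimates of the medius analysis in \cite{brenner:11:frontiers}, by $\text{Osc}_j(\mu^m)+\norm{\errP^{\phi,m}}{2,h}$, and this is where the oscillation legitimately appears. You should reorganize the argument around integration by parts on $P_h\phi^m$, not on $\phi^m$.
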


\begin{proof}
The definition of the Ritz projection \eqref{eq:H2-Ritz-projection} leads to,  
\begin{align}
&\aIPh{\phi^m}{\errh^{\phi,m} -E_h \errh^{\phi,m}} + \iprd{\left(\phi^m\right)^3 +(1-\epsilon)\phi^m}{ \errh^{\phi,m} - E_h \errh^{\phi,m} } 
\nonumber
\\
&\quad- 2 \iprd{\nabla \phi^{m} }{ \nabla\left(\errh^{\phi,m} -  E_h \errh^{\phi,m}\right)} - \iprd{\mu^m}{\errh^{\phi,m} -E_h \errh^{\phi,m}}
\nonumber
\\
& = \aIPh{\phi^m - P_h \phi^m}{\errh^{\phi,m} -E_h \errh^{\phi,m}} + \aIPh{P_h \phi^m}{\errh^{\phi,m} -E_h \errh^{\phi,m}} 
\nonumber
\\
&\quad + \iprd{\left(\phi^m\right)^3 - \left(P_h \phi^m\right)^3 }{ \errh^{\phi,m} - E_h \errh^{\phi,m} } + \iprd{ \left(P_h \phi^m\right)^3 }{ \errh^{\phi,m} - E_h \errh^{\phi,m} } 
\nonumber
\\
&\quad + (1-\epsilon) \iprd{\phi^m - P_h \phi^m}{ \errh^{\phi,m} - E_h \errh^{\phi,m} } + (1-\epsilon) \iprd{P_h \phi^m}{ \errh^{\phi,m} - E_h \errh^{\phi,m} } 
\nonumber
\\
&\quad - 2 \iprd{\nabla (\phi^{m} - P_h \phi^m) }{ \nabla\left(\errh^{\phi,m} -  E_h \errh^{\phi,m}\right)} - 2 \iprd{\nabla ( P_h \phi^m) }{ \nabla\left(\errh^{\phi,m} -  E_h \errh^{\phi,m}\right)} 
\nonumber
\\
&\quad - \iprd{\mu^m}{\errh^{\phi,m} -E_h \errh^{\phi,m}}
\nonumber
\\
& = - \aIPh{\phi^m - P_h \phi^m}{E_h \errh^{\phi,m}} + \aIPh{P_h \phi^m}{\errh^{\phi,m} -E_h \errh^{\phi,m}} 
\nonumber
\\
&\quad + \iprd{\left(\phi^m\right)^3 - \left(P_h \phi^m\right)^3 }{ \errh^{\phi,m} - E_h \errh^{\phi,m} } + \iprd{ \left(P_h \phi^m\right)^3 }{ \errh^{\phi,m} - E_h \errh^{\phi,m} } 
\nonumber
\\
&\quad - (1-\epsilon) \iprd{\phi^m - P_h \phi^m}{ E_h \errh^{\phi,m} } + (1-\epsilon) \iprd{P_h \phi^m}{ \errh^{\phi,m} - E_h \errh^{\phi,m} } 
\nonumber
\\
&\quad - 2 \iprd{\nabla (\phi^{m} - P_h \phi^m) }{ \nabla\left(\errh^{\phi,m} -  E_h \errh^{\phi,m}\right)} - 2 \iprd{\nabla ( P_h \phi^m) }{ \nabla\left(\errh^{\phi,m} -  E_h \errh^{\phi,m}\right)} 
\nonumber
\\
&\quad - \iprd{\mu^m}{\errh^{\phi,m} -E_h \errh^{\phi,m}}
\nonumber
\\
&= \aIPh{P_h \phi^m}{\errh^{\phi,m} -E_h \errh^{\phi,m}} + \iprd{ \left(P_h \phi^m\right)^3 + (1-\epsilon) P_h \phi^m}{ \errh^{\phi,m} - E_h \errh^{\phi,m} } 
\nonumber
\\
&\quad - 2 \iprd{\nabla ( P_h \phi^m) }{ \nabla\left(\errh^{\phi,m} -  E_h \errh^{\phi,m}\right)} - \iprd{\mu^m}{\errh^{\phi,m} -E_h \errh^{\phi,m}} 
\nonumber
\\
&\quad+ \sum_{K \in \TT_h}  \int_K \nabla^2 (P_h \phi^m - \phi^m) : \nabla^2 (E_h \errh^{\phi,m}) dx +  \sum_{e \in \Eh} \int_e \dgal[\Bigg]{\frac{\partial^2  \left(E_h \errh^{\phi,m}\right)}{\partial n_e^2} } \left\llbracket \frac{\partial P_h \phi^{m}}{\partial n_e} \right\rrbracket dS 
\nonumber
\\
&\quad- (1-\epsilon) \iprd{\phi^m - P_h \phi^m}{ E_h \errh^{\phi,m} } 
\nonumber
\\
&\quad + \iprd{\left(\phi^m\right)^3 - \left(P_h \phi^m\right)^3 }{ \errh^{\phi,m} - E_h \errh^{\phi,m} }  - 2 \iprd{\nabla (\phi^{m} - P_h \phi^m) }{ \nabla\left(\errh^{\phi,m} -  E_h \errh^{\phi,m}\right)}.
\label{eq:aiph-split}
\end{align}

Furthermore, the following equivalent formulation of the bilinear form $\aIPh{\cdot}{\cdot}$ for functions satisfying $w \in H^4(\Omega,\TT_h) \cap H^1(\Omega)$ and $v \in H^2(\Omega, \TT_h) \cap H^1(\Omega)$:
\begin{align*}
&\aIPh{w}{v} := \, \sum_{K \in \TT_h}  \int_K \left(\Delta^2 w\right) v \, dx + \sum_{e \in \Eh} \int_e \dgal[\Bigg]{\frac{\partial^2 v}{\partial n_e^2} } \left\llbracket \frac{\partial w}{\partial n_e} \right\rrbracket dS 
\nonumber
\\
&\quad+ \sum_{e \in \Eh} \int_e \left( \left\llbracket \frac{\partial \Delta w}{\partial n_e} \right\rrbracket v -  \left\llbracket \frac{\partial^2 w}{\partial n_e^2} \right\rrbracket \dgal[\Bigg]{\frac{\partial v}{\partial n_e} } - \left\llbracket \frac{\partial^2 w}{\partial n_e \partial t_e}  \right\rrbracket \frac{\partial v}{\partial t_e} + \frac{\alpha}{|e|} \left\llbracket \frac{\partial w}{\partial n_e} \right\rrbracket  \left\llbracket \frac{\partial v}{\partial n_e} \right\rrbracket \right) dS,
\end{align*}
where $H^s(\Omega,\TT_h) := \{v \in L^2(\Omega) | v_K \in H^s(K) \forall K \in \TT_h\}$ and where $t_e$ denotes the unit counterclockwise tangent vector, yields the following
\begin{align}\label{eq:aiph-with-projection}
& \aIPh{P_h \phi^{m}}{ \errh^{\phi,m} - E_h \errh^{\phi,m}} = \, \sum_{K \in \TT_h}  \int_K \left(\Delta^2 P_h \phi^m \right) \left(\errh^{\phi,m} -E_h \errh^{\phi,m}\right) \, dx 
\nonumber
\\
&\quad+ \sum_{e \in \Eh} \int_e \left( \dgal[\Bigg]{\frac{\partial^2  \left(\errh^{\phi,m} -E_h \errh^{\phi,m}\right)}{\partial n_e^2} } \left\llbracket \frac{\partial P_h \phi^{m}}{\partial n_e} \right\rrbracket dS + \left\llbracket \frac{\partial \Delta P_h \phi^{m}}{\partial n_e} \right\rrbracket \left(\errh^{\phi,m} -E_h \errh^{\phi,m}\right) \right) dS
\nonumber
\\
&\quad- \sum_{e \in \Eh} \int_e \left( \left\llbracket \frac{\partial^2 P_h \phi^m}{\partial n_e^2} \right\rrbracket \dgal[\Bigg]{\frac{\partial  \left(\errh^{\phi,m} -E_h \errh^{\phi,m}\right)}{\partial n_e} } dS -   \left\llbracket \frac{\partial^2 P_h \phi^m}{\partial n_e \partial t_e} \right\rrbracket \frac{\partial  \left(\errh^{\phi,m} -E_h \errh^{\phi,m}\right)}{\partial t_e} \right) dS
\nonumber
\\
&\quad+ \alpha \sum_{e \in \Eh} \frac{1}{|e|} \int_e  \left\llbracket \frac{\partial P_h \phi^m}{\partial n_e} \right\rrbracket  \left\llbracket \frac{\partial \left(\errh^{\phi,m} -E_h \errh^{\phi,m}\right)}{\partial n_e} \right\rrbracket dS.
\end{align}

Combining equations \eqref{eq:aiph-split}--\eqref{eq:aiph-with-projection}, we have
\begin{align*}
&\aIPh{\phi^m}{\errh^{\phi,m} -E_h \errh^{\phi,m}} + \iprd{\left(\phi^m\right)^3 +(1-\epsilon)\phi^m}{ \errh^{\phi,m} - E_h \errh^{\phi,m} } 
\\
&\quad- 2 \iprd{\nabla \phi^{m} }{ \nabla\left(\errh^{\phi,m} -  E_h \errh^{\phi,m}\right)} - \iprd{\mu^m}{\errh^{\phi,m} -E_h \errh^{\phi,m}}
\\
&= \sum_{K \in \TT_h}  \int_K \left(\Delta^2 P_h \phi^m + \left(P_h \phi^m\right)^3+(1-\epsilon)P_h \phi^m + 2 \Delta P_h \phi^{m}- \mu^m\right) \left(\errh^{\phi,m} -E_h \errh^{\phi,m}\right) \, dx 
\\
&\quad+ \sum_{e \in \Eh} \int_e \left( \dgal[\Bigg]{ \frac{\partial^2  \errh^{\phi,m}}{\partial n_e^2} } \left\llbracket \frac{\partial P_h \phi^{m}}{\partial n_e} \right\rrbracket + \left\llbracket \frac{\partial \Delta P_h \phi^{m}}{\partial n_e} \right\rrbracket \left(\errh^{\phi,m} -E_h \errh^{\phi,m}\right) \right) dS
\\
&\quad- \sum_{e \in \Eh} \int_e \left(  \left\llbracket \frac{\partial^2 P_h \phi^m}{\partial n_e^2} \right\rrbracket \dgal[\Bigg]{\frac{\partial  \left(\errh^{\phi,m} -E_h \errh^{\phi,m}\right)}{\partial n_e} } + \left\llbracket \frac{\partial^2 P_h \phi^m}{\partial n_e \partial t_e} \right\rrbracket \frac{\partial  \left(\errh^{\phi,m} -E_h \errh^{\phi,m}\right)}{\partial t_e} \right) dS
\\
&\quad+ \alpha \sum_{e \in \Eh} \frac{1}{|e|} \int_e  \left\llbracket \frac{\partial P_h \phi^m}{\partial n_e} \right\rrbracket  \left\llbracket \frac{\partial \left(\errh^{\phi,m} -E_h \errh^{\phi,m}\right)}{\partial n_e} \right\rrbracket dS
\\
&\quad+\sum_{K \in \TT_h}  \int_K \nabla^2 (P_h \phi^m - \phi^m) : \nabla^2 (E_h \errh^{\phi,m}) dx - (1-\epsilon) \iprd{\phi^m - P_h \phi^m}{ E_h \errh^{\phi,m} } 
\nonumber
\\
&\quad + \iprd{\left(\phi^m\right)^3 - \left(P_h \phi^m\right)^3 }{ \errh^{\phi,m} - E_h \errh^{\phi,m} }  - 2 \iprd{\nabla (\phi^{m} - P_h \phi^m) }{ \nabla\left(\errh^{\phi,m} -  E_h \errh^{\phi,m}\right)}.
\end{align*}
Following the medius analysis presented in \cite{brenner:11:frontiers} (details may be found in Appendix \ref{app:osc-bound-proof}), all but the last three terms on the right-hand side can be bounded by:
\begin{align*}
 \Bigg( \left[\text{Osc}_j(\mu^m)\right]^2 + \sum_{K \in \TT_h} |\phi^m - P_h \phi^m |_{H^2(K)}^2 + \sum_{e \in \Eh} \frac{1}{|e|} \norm{\left\llbracket \frac{\partial (P_h \phi^{m} - \phi^m)}{\partial n_e} \right\rrbracket }{L^2(e)}^2\Bigg)^{\nicefrac{1}{2}} .
\end{align*}
The last three terms on the right-hand side are bounded as follows:
\begin{align*}
&\left|(1-\epsilon) \iprd{ \phi^m - P_h \phi^m }{ E_h \errh^{\phi,m} }\right| \le C \left(\sum_{K \in \TT_h} |\phi^m - P_h \phi^m|_{L^2(K)}^2\right)^{\nicefrac{1}{2}} \norm{\errh^{\phi,m}}{2,h} 
\\
&\quad \le C \left(\sum_{K \in \TT_h} |\phi^m - P_h \phi^m|_{H^2(K)}^2\right)^{\nicefrac{1}{2}} \norm{\errh^{\phi,m}}{2,h} 
\\
 & \left|\iprd{\left(\phi^m\right)^3 - \left(P_h \phi^m\right)^3 }{ \errh^{\phi,m} - E_h \errh^{\phi,m} } \right| \le \norm{\left(\phi^m\right)^3 - \left(P_h \phi^m\right)^3}{L^2} \norm{ \errh^{\phi,m} - E_h \errh^{\phi,m} }{L^2}
 \\
 &\quad\le C \norm{\left(\phi^m\right)^2 + \phi^m P_h \phi^m + \left(P_h \phi^m\right)^2}{L^3} \norm{\phi^m - P_h \phi^m}{L^6} \norm{ \errh^{\phi,m} - E_h \errh^{\phi,m} }{2,h}
  \\
 &\quad\le C\left( \norm{\left(\phi^m\right)^2}{L^3} + \norm{\left(P_h \phi^m\right)^2}{L^3} \right) \norm{\phi^m - P_h \phi^m}{L^6} \norm{ \errh^{\phi,m} - E_h \errh^{\phi,m} }{2,h}
   \\
 &\quad\le C\left( \norm{\phi^m}{L^6}^2 + \norm{P_h \phi^m}{L^6}^2 \right) \norm{\phi^m - P_h \phi^m}{L^6} \norm{ \errh^{\phi,m} - E_h \errh^{\phi,m} }{2,h}
    \\
 &\quad\le C\left( \norm{\phi^m}{H^1}^2 + \norm{P_h \phi^m}{H^1}^2 \right) \norm{\phi^m - P_h \phi^m}{H^1} \norm{ \errh^{\phi,m} - E_h \errh^{\phi,m} }{2,h}
     \\
 &\quad\le C\left( \norm{\phi^m}{H^1}^2 \right) \norm{\phi^m - P_h \phi^m}{H^1} \norm{ \errh^{\phi,m} - E_h \errh^{\phi,m} }{2,h}
\\
&\quad\le C \left(\sum_{K \in \TT_h} |\phi^m - P_h \phi^m|_{H^2(K)}^2\right)^{\nicefrac{1}{2}} \norm{\errh^{\phi,m}}{2,h}
\\
& \left|2 \iprd{ \nabla \left(\phi^{m} - P_h \phi^m \right)}{ \nabla\left(\errh^{\phi,m} -  E_h \errh^{\phi,m} \right) } \right| \le C \left(\sum_{K \in \TT_h} |\phi^m - P_h \phi^m|_{H^2(K)}^2\right)^{\nicefrac{1}{2}} \norm{\errh^{\phi,m}}{2,h} ,
\end{align*}
where we have assumed that the Ritz projection \eqref{eq:H2-Ritz-projection} is stable with respect to the $H^1$ norm and that $\phi \in L^{\infty}(0,T;H^3(\Omega))$ giving that $\norm{\phi^m}{H^1} \le C$ for any $0 \le m \le M$ and where we have used the Cauchy-Schwarz and H\"{o}lder's inequalities.

Thus, we have
\begin{align*}
&\aIPh{ \phi^m}{\errh^{\phi,m} -E_h \errh^{\phi,m}} + \iprd{\left( \phi^m\right)^3 +(1-\epsilon)\phi^m}{ \errh^{\phi,m} - E_h \errh^{\phi,m} } 
\\
&\quad- 2 \iprd{\nabla \phi^{m} }{ \nabla\left(\errh^{\phi,m} -  E_h \errh^{\phi,m}\right)} - \iprd{\mu^m}{\errh^{\phi,m} -E_h \errh^{\phi,m}}
\\
&\quad \le C \Bigg( \left[\text{Osc}_j(\mu^m)\right]^2 + \sum_{K \in \TT_h} |\phi^m - P_h \phi^m |_{H^2(K)}^2 + \sum_{e \in \Eh} \frac{1}{|e|} \norm{\left\llbracket \frac{\partial (P_h \phi^{m} - \phi^m)}{\partial n_e} \right\rrbracket }{L^2(e)}^2\Bigg)^{\nicefrac{1}{2}} 
\\
&\qquad  \times \norm{\errh^{\phi,m}}{2,h} 
\\
&\quad  \le C \left(\left[\text{Osc}_j(\mu^m)\right] + \norm{ \phi^{m} - P_h \phi^m}{2,h} \right) \norm{\errh^{\phi,m}}{2,h} .
\end{align*}

Equation \eqref{eq:error-bound-2-semi} follows from an application of Young's inequality. Finally, a similar strategy along with an application of the Mean Value Theorem yields \eqref{eq:error-bound-3-semi}.
\end{proof}

We are now in position to prove the main theorem in this section.

\begin{theorem}\label{thm:error-bound-final}
Suppose $(\phi^m,\mu^m)$ is a weak solution to \eqref{eq:pfc-weak}, with the additional regularities \eqref{eq:higher-regularities}. Then for any $\tau ,h > 0,  \epsilon <\min\{1 + \frac{(1-\gamma)C_{coer} - 1}{C_{coer}},1 + 4 - \frac{4(1-\gamma)}{C_{coer}}\} $ and any $0 \le \tau \le M$, 
\begin{align}\label{eq:error-bound-final}
&\norm{\errh^{\phi,\ell}}{2, h}^2 + C \norm{\errh^{\phi,\ell}}{L^2}^2 + C \tau \sum\limits_{m=1}^{\ell}\norm{\nabla \errh^{\mu,m}}{L^2}^2  
\nonumber
\\
&\quad + C \tau^2 \sum\limits_{m=1}^{\ell} \left[ \norm{\dtau \errh^{\phi,\ell}}{2, h}^2 + (1-\epsilon)\norm{\dtau \errh^{\phi,m}}{L^2}^2 + \norm{\nabla \dtau \errh^{\phi,m}}{L^2}^2\right] \le C^* (h^2 + \tau^2 ).
\end{align}
where $C^*$ may depend on the oscillations of $\mu$ and $\partial_t \mu$ and the final stopping time $T$ but does not depend upon the spacial step size $h$ or the time step size $\tau$.
\end{theorem}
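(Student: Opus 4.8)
The plan is to run a discrete energy estimate on the key error equation \eqref{eq:main-error}. First I would multiply \eqref{eq:main-error} by $\tau$ and sum over $m=1,\dots,\ell$. On the left-hand side, the symmetry of $\aIPh{\cdot}{\cdot}$ and the polarization identity convert each inner product against $\dtau\errh^{\phi,m}$ into a telescoping difference plus nonnegative dissipation, e.g.\ $\tau\sum_m\aIPh{\errh^{\phi,m}}{\dtau\errh^{\phi,m}} = \tfrac12\aIPh{\errh^{\phi,\ell}}{\errh^{\phi,\ell}} - \tfrac12\aIPh{\errh^{\phi,0}}{\errh^{\phi,0}} + \tfrac{\tau^2}{2}\sum_m\aIPh{\dtau\errh^{\phi,m}}{\dtau\errh^{\phi,m}}$, and analogously for the $4\iprd{\cdot}{\cdot}$, the $(1-\epsilon)\iprd{\cdot}{\cdot}$, and the lagged $-2\iprd{\nabla\cdot}{\nabla\cdot}$ terms. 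Since $\phih^0 = P_h\phi_0$ forces $\errh^{\phi,0}=0$, all $m=0$ boundary terms drop, and coercivity \eqref{eq:coer} produces exactly the left-hand quantities of \eqref{eq:error-bound-final}: $\norm{\errh^{\phi,\ell}}{2,h}^2$, the accumulated $L^2$ mass, the $\tau\sum\norm{\nabla\errh^{\mu,m}}{L^2}^2$ from the first term, and the $\tau^2\sum$ dissipation.

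The one left-hand term of the wrong sign is $-\norm{\nabla\errh^{\phi,\ell}}{L^2}^2$, produced by telescoping the explicit gradient $-2\iprd{\nabla\errh^{\phi,m-1}}{\nabla\dtau\errh^{\phi,m}}$. I would move it to the right and control it by Lemma \ref{lem:grad-split}, $\norm{\nabla\errh^{\phi,\ell}}{L^2}^2 \le \sqrt{1+\gamma}\,\norm{\errh^{\phi,\ell}}{2,h}\norm{\errh^{\phi,\ell}}{L^2}$, followed by Young's inequality, absorbing it into $\tfrac{C_{coer}}{2}\norm{\errh^{\phi,\ell}}{2,h}^2$ and the $L^2$ mass; the two $\epsilon$-thresholds in the statement are precisely the conditions leaving strictly positive coefficients after this absorption, exactly as in Lemma \ref{lem:stability}. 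The remaining temporal and projection terms are then routine: the truncation term $\iprd{\dtau\phi^m - \partial_t\phi^m}{\errh^{\mu,m}}$ is $O(\tau^2)$ after writing the consistency error as an integral remainder and pairing it, via Poincar\'e on the mean-zero part, against $\norm{\nabla\errh^{\mu,m}}{L^2}$ (absorbed); the $\errP^{\phi,m}$ and $\errR^{\mu,m}$ contributions are $O(h^2)$ from the approximation properties of \eqref{eq:H2-Ritz-projection}--\eqref{eq:H1-Ritz-projection}; the remaining products tested against $\dtau\errh^{\phi,m}$ are treated by pairing through the discrete negative norm $\norm{\dtau\errh^{\phi,m}}{-1,h}$ and invoking Lemma \ref{lem:minus-1-norm-bound}, which trades it for $\norm{\nabla\errh^{\mu,m}}{L^2}$ (absorbed) plus $O(h^2+\tau^2)$ data. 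The cubic difference is split as $(\phi^m)^3-(\phih^m)^3 = [(\phi^m)^3-(P_h\phi^m)^3] + [(P_h\phi^m)^3-(\phih^m)^3]$: the first bracket is a projection error controlled by the $H^1$/$L^6$ stability bounds of Lemma \ref{lem:stability} as in the proof of Lemma \ref{lem:bounds-oscillations}, while the second, being $\errh^{\phi,m}$ times a nonnegative quadratic factor, is absorbed using the added $4\iprd{\errh^{\phi,m}}{\dtau\errh^{\phi,m}}$ and fed to the Gronwall sum.

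The crux, and the step I expect to be the main obstacle, is the nonconforming consistency carried by $E_h$ together with the explicit-gradient temporal consistency $2\iprd{\nabla\phi^m-\nabla\phi^{m-1}}{\nabla\dtau\errh^{\phi,m}}$, both tested against $\dtau\errh^{\phi,m}$. A direct Young estimate fails here, because a term of the form $\tau^2\sum\norm{\nabla\dtau\phi^m}{L^2}^2$ is only $O(\tau)$; the remedy is a summation by parts in time. Writing $\tau\,\dtau\errh^{\phi,m} = \errh^{\phi,m}-\errh^{\phi,m-1}$ and using $\errh^{\phi,0}=0$, the consistency bundle collapses to a single boundary term at level $\ell$, evaluated against $\errh^{\phi,\ell}$, plus a telescoped sum of the time-differenced consistency evaluated against $\errh^{\phi,m-1}$. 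The boundary term is controlled by \eqref{eq:error-bound-2-semi} (yielding $\text{Osc}_j(\mu^\ell)$, the projection error $\norm{\errP^{\phi,\ell}}{2,h}$, and an absorbable $\norm{\errh^{\phi,\ell}}{2,h}^2$), while the telescoped sum is controlled by \eqref{eq:error-bound-3-semi}, whose left-hand side is exactly the time-difference of the bundle --- including the gradient difference $\dtau\nabla\phi^m$ --- and whose right-hand side yields $\text{Osc}_j(\partial_t\mu)$, a projection term, and $\norm{\errh^{\phi,m-1}}{2,h}^2$. For the pure lagged-gradient piece I would integrate by parts in space before summing in time; the natural boundary condition on $\phi$ and the $C^0$ continuity of $\errh^{\phi,m}$ kill all edge contributions, leaving second-order time differences of $\phi$ paired with $\errh^{\phi,m}$, which give the $O(\tau^2)$ contribution plus a Gronwall remainder. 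Summing, the oscillation terms contribute at order $h^4$, the projection terms at order $h^2$, the temporal terms at order $\tau^2$, and every leftover $\norm{\errh^{\phi,m}}{2,h}^2$ and $\norm{\errh^{\phi,m}}{L^2}^2$ appears under a factor $\tau$; a discrete Gronwall inequality then closes the estimate and delivers $C^*(h^2+\tau^2)$, with $C^*$ absorbing the $e^{CT}$ factor and the data oscillations.
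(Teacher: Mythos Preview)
Your plan is essentially the paper's: the polarization/telescoping on the left, the summation-by-parts in time (what the paper calls the ``discrete product rules'') to reduce the $E_h$-consistency bundle to a boundary term at level $\ell$ handled by \eqref{eq:error-bound-2-semi} and a telescoped sum handled by \eqref{eq:error-bound-3-semi}, the use of Lemma~\ref{lem:minus-1-norm-bound} to trade $\norm{\dtau\errh^{\phi,m}}{-1,h}$ for $\norm{\nabla\errh^{\mu,m}}{L^2}$, Lemma~\ref{lem:grad-split} to absorb the wrong-sign gradient, and a discrete Gr\"onwall to close. That is exactly the architecture of the paper's proof.

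Two places where you deviate and the paper is more direct. First, for the temporal-gradient consistency $2\iprd{\nabla\phi^m-\nabla\phi^{m-1}}{\nabla\dtau\errh^{\phi,m}}$ the paper does \emph{not} sum by parts in time; it integrates by parts once in space to $-2\iprd{\tau\Delta\dtau\phi^m}{\dtau\errh^{\phi,m}}$ and immediately pairs via the $-1,h$ duality against $\norm{\tau\nabla\Delta\dtau\phi^m}{L^2}$, which is $O(\tau)$ using $\partial_t\phi\in L^2(0,T;H^3)$ --- no second time-differences of $\phi$ are needed. Second, your treatment of the cubic is shaky: writing $(P_h\phi^m)^3-(\phih^m)^3 = q\,\errh^{\phi,m}$ with $q\ge 0$ does not let you ``absorb it using the added $4\iprd{\errh^{\phi,m}}{\dtau\errh^{\phi,m}}$,'' because $q$ is neither bounded by $4$ nor constant in $m$, so $\iprd{(4-q)\errh^{\phi,m}}{\dtau\errh^{\phi,m}}$ neither has a sign nor telescopes. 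The paper instead bounds the full cubic difference in one shot via the $-1,h$ duality, $\bigl|\iprd{(\phi^m)^3-(\phih^m)^3}{\dtau\errh^{\phi,m}}\bigr|\le \norm{\nabla\bigl((\phi^m)^3-(\phih^m)^3\bigr)}{L^2}\norm{\dtau\errh^{\phi,m}}{-1,h}$, expands the gradient, and controls the coefficients with the $L^6$/$H^1$ a~priori bounds from Lemma~\ref{lem:stability}; this yields $C\norm{\errP^{\phi,m}}{2,h}^2+C\norm{\errh^{\phi,m}}{2,h}^2$ plus an absorbable $\norm{\dtau\errh^{\phi,m}}{-1,h}^2$, which feeds cleanly into Gr\"onwall. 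With these two local fixes your outline coincides with the paper's proof.
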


\begin{proof}
Starting with the first five terms on the right hand side of \eqref{eq:main-error} and using Young's and H\"{o}lder's inequalities, Poincar\'{e}'s inequality, Taylor's theorem, and properties \eqref{eq:plus-1-minus-1-estimate} and \eqref{eq:plus-1-minus-1-estimate-g}, we have
\begin{align}
\iprd{\dtau \phi^{m} - \partial_t \phi^{m}}{\errh^{\mu,m}}&\le \norm{\dtau \phih^{m} - \partial_t \phi^{m}}{L^2}\norm{ \errh^{\mu,m}}{L^2}^2
\nonumber
\\
&\le C  \tau \int_{t_{m-1}}^{t_{m}}\norm{\partial_{ss}\phi(s)}{L^2}^2 ds + \frac{1}{12} \norm{\nabla \errh^{\mu,m}}{L^2}^2,\label{eq:error-bound-1}
\end{align}
\begin{align}
\iprd{\dtau \errP^{\phi,m}}{\errh^{\mu,m}} &\le \norm{\dtau \errP^{\phi,m}}{L^2} \norm{\errh^{\mu,m}}{L^2}
\nonumber
\\
&\le C \norm{\dtau \errP^{\phi,m}}{L^2}^2 + \frac{1}{12}\norm{\nabla \errh^{\mu,m}}{L^2}^2
\nonumber
\\
&\le \frac{C}{\tau} \int_{t_{m-1}}^{t_{m}} \norm{ P_h \partial_s \phi(s)- \partial_s \phi(s)}{L^2}^2 ds + \frac{1}{12}\norm{\nabla \errh^{\mu,m}}{L^2}^2
\nonumber
\\
&\le \frac{C}{\tau} \int_{t_{m-1}}^{t_{m}} \norm{ \partial_s \phi(s) - P_h \partial_s \phi(s)}{2,h}^2 ds + \frac{1}{12}\norm{\nabla \errh^{\mu,m}}{L^2}^2 ,
\label{eq:error-bound-2}
\\
\iprd{ \errR^{\mu,m}}{ \dtau \errh^{\phi,m} } &\le C\norm{\nabla \errR^{\mu,m}}{L^2}^2 + \frac{1}{48} \norm{\dtau \errh^{\phi,m}}{-1,h}^2,
\label{eq:error-bound-3}
\\
\nonumber
\\
2 \iprd{ \nabla \phi^{m} - \nabla \phi^{m-1}}{\nabla \dtau \errh^{\phi,m} } &= -2 \iprd{\tau \Delta \dtau \phi^m}{ \dtau \errh^{\phi,m} } 
\nonumber
\\
&\le 2\norm{\tau \nabla \Delta \dtau \phi^m}{L^2} \norm{ \dtau \errh^{\phi,m} }{-1,h}
\nonumber
\\
&\le C \tau \int_{t_{m-1}}^{t_{m}} \norm{\partial_s\phi(s)}{H^{3}}^2 \, ds + \frac{1}{48}  \norm{\dtau \errh^{\phi,m}}{-1,h}^2
\label{eq:error-bound-4}
\\
\text{and}
\nonumber
\\
\iprd{ \errh^{\phi,m}}{ \dtau \errh^{\phi,m} }  &\le  C\norm{\errh^{\phi,m}}{2,h}^2 + \frac{1}{48} \norm{\dtau \errh^{\phi,m}}{-1,h}^2.  
\label{eq:error-bound-5}
	\end{align}
For the nonlinear term, we use properties \eqref{eq:plus-1-minus-1-estimate} and \eqref{eq:plus-1-minus-1-estimate-g} along with Lemma \ref{lem:stability} and Young's, H\"{o}lder's, and Poincar\'{e} ineqaulities and the higher-regularities \eqref{eq:higher-regularities} to obtain,
\begin{align}\label{eq:error-bound-6}
&\iprd{\left(\phi^m\right)^3 - \left(\phih^m\right)^3}{\dtau \errh^{\phi,m} } \le \norm{\nabla \left(\left(\phi^m\right)^3 - \left(\phih^m\right)^3\right)}{L^2} \norm{\dtau \errh^{\phi,m}}{-1,h}
\nonumber
\\
&\quad= \norm{3 \left(\phi^m\right)^2 \nabla\phi^m - 3\left(\phih^m\right)^2 \nabla \phih^m}{L^2} \norm{\dtau \errh^{\phi,m}}{-1,h}
\nonumber
\\
&\quad= 3 \norm{\left(\phi^m + \phih^m\right) \nabla \phi^m \err^{\phi,m} + \left(\phih^m\right)^2 \nabla \err^{\phi,m}}{L^2} \norm{\dtau \errh^{\phi,m}}{-1,h}
\nonumber
\\
&\quad\le 3 \left(\norm{\phi^m + \phih^m}{L^6} \norm{\nabla \phi^m}{L^6} \norm{\err^{\phi,m}}{L^6} + \norm{\phih^m}{L^6}^2 \norm{\nabla \err^{\phi,m}}{L^6}\right) \norm{\dtau \errh^{\phi,m}}{-1,h}
\nonumber
\\
&\quad\le C\left(\norm{\nabla \errP^{\phi,m}}{L^2} + \norm{\nabla \errh^{\phi,m}}{L^2} + \norm{ \errP^{\phi,m}}{2,h} + \norm{ \errh^{\phi,m}}{2,h}\right) \norm{\dtau \errh^{\phi,m}}{-1,h}
\nonumber
\\
&\quad\le C \norm{ \errP^{\phi,m}}{2,h}^2 + C\norm{ \errh^{\phi,m}}{2,h}^2 + \frac{1}{48}  \norm{\dtau \errh^{\phi,m}}{-1,h}^2.
\end{align}
	
For the remaining terms, we note that the following discrete product rules hold for any bilinear form and remark that these discrete product rules are key to recovering the predicted error estimates:
\begin{align*}
\iprd{a^{m-1}}{\frac{b^{m} - b^{m-1}}{\tau}} &= \frac{1}{\tau}\left[\iprd{a^{m} }{ b^{m}} - \iprd{a^{m-1}}{b^{m-1}} \right] - \iprd{\frac{ a^{m} - a^{m-1}}{\tau}}{b^{m}}
\\
&= \dtau \iprd{a^{m} }{ b^{m}} - \iprd{\dtau a^{m}}{b^{m}},
\end{align*}
and
\begin{align*}
\iprd{a^{m}}{\frac{b^{m} - b^{m-1}}{\tau}} &= \frac{1}{\tau}\left[\iprd{a^{m} }{ b^{m}} - \iprd{a^{m-1}}{b^{m-1}} \right] - \iprd{\frac{ a^{m} - a^{m-1}}{\tau}}{b^{m-1}}
\\
&= \dtau \iprd{a^{m} }{ b^{m}} - \iprd{\dtau a^{m} }{b^{m-1}}.
\end{align*}
Thus, we have the following bound
\begin{align}
2 \iprd{ \nabla \errP^{\phi,m-1}}{ \nabla \dtau \errh^{\phi,m}  }  &= 2\dtau \iprd{ \nabla \errP^{\phi,m} }{\nabla \errh^{\phi,m}}  - 2 \iprd{\nabla \dtau \errP^{\phi,m}}{\nabla \errh^{\phi,m}}
\nonumber
\\
&\le 2 \dtau\iprd{ \nabla \errP^{\phi,m} }{\nabla \errh^{\phi,m}} + C \norm{\dtau \errP^{\phi,m}}{L^2}^2 + C \norm{ \errh^{\phi,m}}{2,h}^2 
\nonumber
\\
&\le 2 \dtau\iprd{ \nabla \errP^{\phi,m} }{\nabla \errh^{\phi,m}} + \frac{C}{\tau} \int_{t_{m-1}}^{t_{m}} \norm{ \partial_s \phi(s) - P_h \partial_s \phi(s)}{2,h}^2 ds
\nonumber
\\
&\quad+ C \norm{ \errh^{\phi,m}}{2,h}^2 . \label{eq:error-bound-7}
\end{align}

Additionally invoking Lemma \ref{lem:bounds-oscillations} yields,
\begin{align}
&\aIPh{\phi^m}{\dtau \errh^{\phi,m} - E_h \dtau \errh^{\phi,m} } +\iprd{\left(\phi^m\right)^3 + (1-\epsilon)\phi^m}{\dtau \errh^{\phi,m} - E_h \dtau \errh^{\phi,m} } 
\nonumber
\\
&\quad- 2 \iprd{ \nabla \phi^{m}}{ \nabla \dtau \errh^{\phi,m}  - \nabla E_h \dtau \errh^{\phi,m} } - \iprd{ \mu^m}{ \dtau \errh^{\phi,m} - E_h \dtau \errh^{\phi,m} }
\nonumber
\\
&= \dtau \aIPh{\phi^m}{\errh^{\phi,m} - E_h \errh^{\phi,m}} + \dtau \iprd{\left(\phi^m\right)^3 +(1-\epsilon)\phi^m}{ \errh^{\phi,m} - E_h \errh^{\phi,m} }
\nonumber
\\
&\quad-2 \dtau \iprd{\nabla \phi^{m} }{ \nabla\left(\errh^{\phi,m} -  E_h \errh^{\phi,m}\right)} - \dtau \iprd{\mu^m}{\errh^{\phi,m} - E_h \errh^{\phi,m}} 
\nonumber
\\
&\quad-\aIPh{\dtau \phi^{m}}{\errh^{\phi,m-1} - E_h \errh^{\phi,m-1}} - \iprd{\dtau \left(\left(\phi^m\right)^3 + (1-\epsilon)\phi^m \right)}{\errh^{\phi,m-1} - E_h \errh^{\phi,m-1} } 
\nonumber
\\
&\quad+2 \iprd{\dtau \nabla \phi^{m}}{\nabla \left(\errh^{\phi,m-1} -  E_h \errh^{\phi,m-1}\right)} + \iprd{\dtau \mu^{m}}{\errh^{\phi,m-1} - E_h \errh^{\phi,m-1}} 
\nonumber
\\
&\le \dtau \aIPh{\phi^m}{\errh^{\phi,m} - E_h \errh^{\phi,m}} + \dtau \iprd{\left(\phi^m\right)^3 +(1-\epsilon)\phi^m}{ \errh^{\phi,m} - E_h \errh^{\phi,m} }
\nonumber
\\
&\quad-2 \dtau \iprd{\nabla \phi^{m} }{ \nabla\left(\errh^{\phi,m} -  E_h \errh^{\phi,m}\right)} - \dtau \iprd{\mu^m}{\errh^{\phi,m} - E_h \errh^{\phi,m}} 
\nonumber
\\
&\quad+ C \left[\text{Osc}_j(\partial_t \mu(t^*))\right]^2 +  C \norm{\errP^{\phi,m}}{2,h}^2 + C\norm{\errh^{\phi,m-1}}{2,h}^2. \label{eq:error-bound-8}
\end{align}

Now applying the polarization property to the appropriate terms on the left-hand side of \eqref{eq:main-error} and combining the resulting inequality with equations \eqref{eq:error-bound-1}--\eqref{eq:error-bound-8}, we have
\begin{align*}
&\norm{\nabla \errh^{\mu,m}}{L^2}^2 + \frac{1}{2} \dtau \, \aIPh{\errh^{\phi,m}}{\errh^{\phi,m}} + \frac{\tau}{2} \aIPh{\dtau \errh^{\phi,m}}{\dtau \errh^{\phi,m}} 
\\
&\quad + \frac{(5-\epsilon)}{2} \dtau \norm{\errh^{\phi,m}}{L^2}^2  + \frac{(5-\epsilon)\tau}{2} \norm{\dtau \errh^{\phi,m}}{L^2}^2 + \tau \norm{\nabla \dtau \errh^{\phi,m}}{L^2}^2
\\
&\le \dtau \iprd{\nabla \errh^{\phi,m} }{\nabla \errh^{\phi,m}} + 2 \dtau\iprd{ \nabla \errP^{\phi,m} }{\nabla \errh^{\phi,m}}  + \frac{2}{12} \norm{\nabla \errh^{\mu,m}}{L^2}^2 + \frac{1}{12} \norm{\dtau \errh^{\phi,m}}{-1,h}^2 
\\
&\quad +  C\norm{\errh^{\phi,m-1}}{2,h}^2 + C \norm{\errh^{\phi,m}}{2,h}^2   + C \norm{\nabla \errR^{\mu,m}}{L^2}^2  + C \norm{\errP^{\phi,m}}{2,h}^2 +  C \left[\text{Osc}_j(\partial_t \mu(t^*))\right]^2 
\\
&\quad + C  \tau \int_{t_{m-1}}^{t_{m}} \left[ \norm{\partial_s\phi(s)}{L^2}^2 + \norm{\partial_{ss}\phi(s)}{L^2}^2  \right] ds  + \frac{C}{\tau} \int_{t_{m-1}}^{t_{m}} \norm{\partial_s \phi(s) - P_h \partial_s \phi(s)}{2,h}^2 ds 
\\
&\quad+\dtau \aIPh{\phi^m}{\errh^{\phi,m} - E_h \errh^{\phi,m}} + \dtau \iprd{\left(\phi^m\right)^3 +(1-\epsilon)\phi^m}{ \errh^{\phi,m} - E_h \errh^{\phi,m} }
\nonumber
\\
&\quad-2 \dtau \iprd{\nabla \phi^{m} }{ \nabla\left(\errh^{\phi,m} -  E_h \errh^{\phi,m}\right)} - \dtau \iprd{\mu^m}{\errh^{\phi,m} - E_h \errh^{\phi,m}} .
\end{align*}

Invoking Lemma \ref{lem:minus-1-norm-bound}, we have
\begin{align*}
&\norm{\nabla \errh^{\mu,m}}{L^2}^2 + \frac{1}{2} \dtau \, \aIPh{\errh^{\phi,m}}{\errh^{\phi,m}} + \frac{\tau}{2} \aIPh{\dtau \errh^{\phi,m}}{\dtau \errh^{\phi,m}} 
\\
&\quad+ \frac{(5-\epsilon)}{2} \dtau \norm{\errh^{\phi,m}}{L^2}^2  + \frac{(5-\epsilon)\tau}{2}  \norm{\dtau \errh^{\phi,m}}{L^2}^2 + \tau \norm{\nabla \dtau \errh^{\phi,m}}{L^2}^2
\\
&\le\dtau \iprd{\nabla \errh^{\phi,m} }{\nabla \errh^{\phi,m}}  + 2 \dtau\iprd{ \nabla \errP^{\phi,m} }{\nabla \errh^{\phi,m}}  + \frac{6}{12} \norm{\nabla \errh^{\mu,m}}{L^2}^2 + C \norm{\errh^{\phi,m}}{2,h}^2   
\\
&\quad +  C\norm{\errh^{\phi,m-1}}{2,h}^2 +  C \norm{\nabla \errR^{\mu,m}}{L^2}^2  +  C \norm{\errP^{\phi,m}}{2,h}^2  +  C \left[\text{Osc}_j(\partial_t \mu(t^*))\right]^2 
\\
&\quad + C  \tau \int_{t_{m-1}}^{t_{m}} \left[ \norm{\partial_s\phi(s)}{L^2}^2 + \norm{\partial_{ss}\phi(s)}{L^2}^2  \right] ds  + \frac{C}{\tau} \int_{t_{m-1}}^{t_{m}} \norm{\partial_s \phi(s) - P_h \partial_s \phi(s)}{2,h}^2 ds 
\\
&\quad+\dtau \aIPh{\phi^m}{\errh^{\phi,m} - E_h \errh^{\phi,m}} + \dtau \iprd{\left(\phi^m\right)^3 +(1-\epsilon)\phi^m}{ \errh^{\phi,m} - E_h \errh^{\phi,m} }
\nonumber
\\
&\quad-2 \dtau \iprd{\nabla \phi^{m} }{ \nabla\left(\errh^{\phi,m} -  E_h \errh^{\phi,m}\right)} - \dtau \iprd{\mu^m}{\errh^{\phi,m} - E_h \errh^{\phi,m}} .
\end{align*}
Combining like terms, applying $2\tau \sum\limits_{m=1}^{\ell}$, using the fact that $\errh^{\phi,0} = 0$, invoking equation \eqref{eq:L4-bound} and Lemma \ref{lem:grad-split}, and applying H\"{o}lder's inequality, we obtain
\begin{align*}
&\aIPh{\errh^{\phi,\ell}}{\errh^{\phi,\ell}} + (5-\epsilon) \norm{\errh^{\phi,\ell}}{L^2}^2 + \tau \sum\limits_{m=1}^{\ell}\norm{\nabla \errh^{\mu,m}}{L^2}^2  
\\
&\quad + \tau^2 \sum\limits_{m=1}^{\ell} \left[ \aIPh{\dtau \errh^{\phi,m}}{\dtau \errh^{\phi,m}} + (5-\epsilon)\norm{\dtau \errh^{\phi,m}}{L^2}^2 + 2 \norm{\nabla \dtau \errh^{\phi,m}}{L^2}^2\right]
\\
&\le 2\sqrt{(1-\gamma)}\norm{ \errh^{\phi,\ell} }{L^2} \norm{\errh^{\phi,\ell}}{2,h}  + 4\sqrt{(1-\gamma)}\norm{\errP^{\phi,\ell}}{L^2}\norm{\errh^{\phi,\ell}}{2,h} + C \tau \sum\limits_{m=1}^{\ell} \norm{\errh^{\phi,m}}{2,h}^2 
\\
&\quad+C \tau \sum\limits_{m=1}^{\ell}\left[ \norm{\nabla \errR^{\mu,m}}{L^2}^2 +  \norm{\errP^{\phi,m}}{2,h}^2  + \left[\text{Osc}_j(\partial_t \mu(t^*))\right]^2 \right]
\\
&\quad+ C  \tau^2 \int_{t_{0}}^{t_{\ell}} \left[ \norm{\partial_s\phi(s)}{L^2}^2 + \norm{\partial_{ss}\phi(s)}{L^2}^2   \right] ds  + C \int_{t_{0}}^{t_{\ell}} \norm{\partial_s \phi(s) - P_h \partial_s \phi(s)}{2,h}^2 ds
\\
&\quad+ 2 \Big[ \aIPh{\phi^{\ell}}{ \errh^{\phi,\ell} - E_h \errh^{\phi,\ell}} +  \iprd{\left(\phi^\ell\right)^3 + (1-\epsilon)\phi^\ell }{ \errh^{\phi,\ell} - E_h \errh^{\phi,\ell} } 
\\
&\quad- 2 \iprd{\nabla \phi^{\ell} }{ \nabla\left(\errh^{\phi,\ell} -  E_h \errh^{\phi,\ell}\right)}  -  \iprd{\mu^{\ell}}{\errh^{\phi,\ell} - E_h \errh^{\phi,\ell}} \Big].
\end{align*}
Applying Young's and H\"{o}lder's inequalities and Lemma \ref{lem:bounds-oscillations} yields, 
\begin{align*}
&\aIPh{\errh^{\phi,\ell}}{\errh^{\phi,\ell}} + (5-\epsilon) \norm{\errh^{\phi,\ell}}{L^2}^2 + \tau \sum\limits_{m=1}^{\ell}\norm{\nabla \errh^{\mu,m}}{L^2}^2  
\\
&\quad + \tau^2 \sum\limits_{m=1}^{\ell} \left[ \aIPh{\dtau \errh^{\phi,m}}{\dtau \errh^{\phi,m}} + (5-\epsilon)\norm{\dtau \errh^{\phi,m}}{L^2}^2 + 2 \norm{\nabla \dtau \errh^{\phi,m}}{L^2}^2\right]
\\
&\le \frac{C_{coer}}{2\beta}\norm{\errh^{\phi,\ell}}{2,h}^2 + \frac{4(1-\gamma)\beta}{C_{coer}} \norm{\errh^{\phi,\ell}}{L^2}^2 + C \norm{\errP^{\phi,\ell}}{2,h}^2 + C \tau \sum\limits_{m=1}^{\ell} \norm{\errh^{\phi,m}}{2,h}^2 
\\
&\quad+C \tau \sum\limits_{m=1}^{\ell}\left[ \norm{\nabla \errR^{\mu,m}}{L^2}^2 +  \norm{\errP^{\phi,m}}{2,h}^2  + \left[\text{Osc}_j(\partial_t \mu(t^*))\right]^2 \right]
\\
&\quad+ C  \tau^2 \int_{t_{0}}^{t_{\ell}} \left[ \norm{\partial_s\phi(s)}{L^2}^2 + \norm{\partial_{ss}\phi(s)}{L^2}^2   \right] ds  + C \int_{t_{0}}^{t_{\ell}} \norm{\partial_s \phi(s) - P_h \partial_s \phi(s)}{2,h}^2 ds
\\
&\quad+ 2 \Big[ C \left[\text{Osc}_j(\mu^\ell)\right]^2 +  C \norm{\errP^{\phi,\ell}}{2,h}^2 + \frac{C_{coer}}{4\beta} \norm{\errh^{\phi,\ell}}{2,h}^2\Big],
\end{align*}
for $t^* \in (t_{m-1}, t_{m})$. Invoking Lemma \ref{lem:aIPh-boundedness} and combining like terms, we have
\begin{align*}
&\left[C_{coer} -\frac{C_{coer}}{\beta} \right] \norm{\errh^{\phi,\ell}}{2, h}^2 + \left[(5-\epsilon) - \frac{4(1-\gamma)}\beta{C_{coer}} \right] \norm{\errh^{\phi,\ell}}{L^2}^2 + C \tau \sum\limits_{m=1}^{\ell}\norm{\nabla \errh^{\mu,m}}{L^2}^2  
\nonumber
\\
&\quad+ C \tau^2 \sum\limits_{m=1}^{\ell} \left[ C_{coer} \norm{\dtau \errh^{\phi,\ell}}{2, h}^2 + (5-\epsilon)\norm{\dtau \errh^{\phi,m}}{L^2}^2 + \norm{\nabla \dtau \errh^{\phi,m}}{L^2}^2\right] 
\nonumber
\\
&\le\,  C \tau \sum\limits_{m=1}^{\ell }  \norm{\errh^{\phi,m}}{2,h}^2 + C \norm{\errP^{\phi,\ell}}{2,h}^2 + C \left[\text{Osc}_j(\mu^\ell)\right]^2 
\\
&\quad+C \tau \sum\limits_{m=1}^{\ell}\left[ \norm{\nabla \errR^{\mu,m}}{L^2}^2 +  \norm{\errP^{\phi,m}}{2,h}^2  + \left[\text{Osc}_j(\partial_t \mu(t^*))\right]^2 \right]
\\
&\quad+ C  \tau^2 \int_{t_{0}}^{t_{\ell}} \left[ \norm{\partial_s\phi(s)}{L^2}^2 + \norm{\partial_{ss}\phi(s)}{L^2}^2   \right] ds  + C \int_{t_{0}}^{t_{\ell}} \norm{\partial_s \phi(s) - P_h \partial_s \phi(s)}{2,h}^2 ds .
\end{align*}
Requiring $\beta > 1$ and $\epsilon <\min\{1 + \frac{(1-\gamma)C_{coer} - 1}{C_{coer}},1 + 4 - \frac{4(1-\gamma)}{C_{coer}}\} $, we have
\begin{align*}
&C_1 \norm{\errh^{\phi,\ell}}{2, h}^2 + C \norm{\errh^{\phi,\ell}}{L^2}^2 + C \tau \sum\limits_{m=1}^{\ell}\norm{\nabla \errh^{\mu,m}}{L^2}^2  
\nonumber
\\
&\quad+ C \tau^2 \sum\limits_{m=1}^{\ell} \left[ C_{coer} \norm{\dtau \errh^{\phi,\ell}}{2, h}^2 + (5-\epsilon)\norm{\dtau \errh^{\phi,m}}{L^2}^2 + \norm{\nabla \dtau \errh^{\phi,m}}{L^2}^2\right] 
\nonumber
\\
&\le\,  C_2 \tau \sum\limits_{m=1}^{\ell }  \norm{\errh^{\phi,m}}{2,h}^2 + C \norm{\errP^{\phi,\ell}}{2,h}^2 + C \left[\text{Osc}_j(\mu^\ell)\right]^2 
\\
&\quad+C \tau \sum\limits_{m=1}^{\ell}\left[ \norm{\nabla \errR^{\mu,m}}{L^2}^2 +  \norm{\errP^{\phi,m}}{2,h}^2  + \left[\text{Osc}_j(\partial_t \mu(t^*))\right]^2 \right]
\\
&\quad+ C  \tau^2 \int_{t_{0}}^{t_{\ell}} \left[ \norm{\partial_s\phi(s)}{L^2}^2 + \norm{\partial_{ss}\phi(s)}{L^2}^2   \right] ds  + C \int_{t_{0}}^{t_{\ell}} \norm{\partial_s \phi(s) - P_h \partial_s \phi(s)}{2,h}^2 ds 
\nonumber
\\
&\le\,  C_2 \tau \sum\limits_{m=1}^{\ell }  \norm{\errh^{\phi,m}}{2,h}^2 + C \norm{\errP^{\phi,\ell}}{2,h}^2 + C \left[\text{Osc}_j(\mu^\ell)\right]^2 
\\
&\quad+C \tau \sum\limits_{m=1}^{\ell}\left[ C h^2 |\mu|_{H^2(\Omega)}^2 +  \norm{\errP^{\phi,m}}{2,h}^2  + \left[\text{Osc}_j(\partial_t \mu(t^*))\right]^2 \right]
\\
&\quad+ C  \tau^2 \int_{t_{0}}^{t_{\ell}} \left[ \norm{\partial_s\phi(s)}{L^2}^2 + \norm{\partial_{ss}\phi(s)}{L^2}^2   \right] ds  + C \int_{t_{0}}^{t_{\ell}} \norm{\partial_s \phi(s) - P_h \partial_s \phi(s)}{2,h}^2 ds ,
\end{align*}
where we have used well-known properties of the Ritz projection operator \eqref{eq:H1-Ritz-projection} in the last step. Combining like terms and considering the higher regularities \eqref{eq:higher-regularities} and the fact that $\text{Osc}_j(f) \le Ch^2$ for some function $f \in L^2(\Omega)$, we have
\begin{align*}
&\norm{\errh^{\phi,\ell}}{2, h}^2 + C \norm{\errh^{\phi,\ell}}{L^2}^2 + C \tau \sum\limits_{m=1}^{\ell}\norm{\nabla \errh^{\mu,m}}{L^2}^2  
\nonumber
\\
&\quad+ C \tau^2 \sum\limits_{m=1}^{\ell} \left[ C_{coer} \norm{\dtau \errh^{\phi,\ell}}{2, h}^2 + (5-\epsilon)\norm{\dtau \errh^{\phi,m}}{L^2}^2 + \norm{\nabla \dtau \errh^{\phi,m}}{L^2}^2\right] 
\nonumber
\\
&\le \, \frac{C_2\tau}{C_1 - C_2 \tau} \sum\limits_{m=1}^{\ell-1 }  \norm{\errh^{\phi,m}}{2,h}^2 + C \norm{\errP^{\phi,\ell}}{2,h}^2 +C \tau \sum\limits_{m=1}^{\ell} \norm{\errP^{\phi,m}}{2,h}^2  
\\
&\quad+ C \int_{t_{0}}^{t_{\ell}} \norm{\partial_s \phi(s) - P_h \partial_s \phi(s)}{2,h}^2 ds + C (T+1) h^2  + C \tau^2 .
\end{align*}

Allowing for $0 \le \tau \le \tau_0$ such that $\tau_0 := \frac{C_1}{C_2}$,  noting the higher regularities \eqref{eq:higher-regularities}, and using the Ritz projection properties from Appendix \ref{app:H2-Ritz-proj-properties}, we have
\begin{align*}
&\norm{\errh^{\phi,\ell}}{2, h}^2 + C \norm{\errh^{\phi,\ell}}{L^2}^2 + C \tau \sum\limits_{m=1}^{\ell}\norm{\nabla \errh^{\mu,m}}{L^2}^2  
\nonumber
\\
&\quad+ C \tau^2 \sum\limits_{m=1}^{\ell} \left[ C_{coer} \norm{\dtau \errh^{\phi,\ell}}{2, h}^2 + (5-\epsilon)\norm{\dtau \errh^{\phi,m}}{L^2}^2 + \norm{\nabla \dtau \errh^{\phi,m}}{L^2}^2\right] 
\nonumber
\\
&\le \, C_3 \tau \sum\limits_{m=1}^{\ell-1 }  \norm{\errh^{\phi,m}}{2,h}^2 + C ((T+1) h^2 + \tau^2 ) ,
\end{align*}
where none of the constants above depend on the mesh size $h$ or the time step size $\tau$.  Applying a discrete Gr\"{o}nwall's concludes the proof.
\end{proof}

\begin{rmk}
Again following \cite{brenner:11:frontiers}, we note that $C_{coer}$ can be chosen to be close to 1 as long as the penalty parameter $\alpha$ is large enough. In this case, $\gamma$ could also be chosen close to 0 and \eqref{eq:error-bound-final} will hold as long as $\epsilon < 1$. 
\end{rmk}


\section{Numerical Experiments}\label{sec:numerical}

In this section, we present two numerical experiments demonstrating the effectiveness of our method. All numerical experiments are completed using the FEniCS project \cite{fenics}. In the first experiment, we show that our method converges with first order accuracy with regard to both time and space. We futhermore show that the discrete energy \eqref{eq:discrete-energy} dissipates over time and we benchmark our results against those found in the paper by Hu, Wise, Wang, and Lowengrub \cite{HWWL:09:PFC}. Therefore, following \cite{HWWL:09:PFC}, we set the initial conditions to be 
\begin{align*}
\phi(x,y) &= 0.07 - 0.02 \cos\left(\frac{2\pi(x-12)}{32}\right) \sin\left(\frac{2\pi(y-1)}{32}\right) 
\\
&+ 0.02\cos^2\left(\frac{\pi(x+10)}{32}\right) \cos^2\left(\frac{\pi(y+3)}{32}\right) - 0.01 \sin^2\left(\frac{4\pi x}{32}\right) \sin^2\left(\frac{4\pi (y-6)}{32}\right)
\end{align*}
and solve on the domain $\Omega = (0,32) \times (0,32)$ to a final stopping time of $T=10$. We solve using the mesh sizes shown in the table below and scale the time step size with the mesh size via $\tau = 0.05 h$. We set $\epsilon = 0.025$ and the penalty parameter $\alpha = 20$. We point out that Neumann boundary conditions are implemented and the finite element spaces are the $P_2, P_1$ Lagrange finite element spaces, respectively. To show first order convergence in the energy norm, we assign the solution from a mesh size of $h=\nicefrac{32}{512}$ with $\tau = 0.05h$ and $T=10$ as the `exact' solution, $\phi_{exact}$. We then define $error_\phi := \phi_h - \phi_{exact}$, where $\phi_h$ indicates the solution on the mesh size $h$ with $\tau = 0.05h$ and $T=10$. We use a similar strategy to compute the errors with respect to $\mu$. Table \ref{tab:convergence} shows the errors and rates of convergence given the parameters noted in the text above.

In Figure \ref{fig:scaled-energy}, the time evolution of the scaled total energy $\nicefrac{F}{32^2}$ is shown using the initial conditions stated above, a mesh size of $h=\nicefrac{32}{256}$ and a time step size of $\tau = 0.05h$ with all other parameters defined above. We note that the scaled total energy shown here almost exactly matches that shown in Figure 1 of \cite{HWWL:09:PFC} where a second order in time finite difference scheme was used to approximate solutions to the PFC equation considering all the same parameter values and the chosen initial conditions. Figure \ref{fig:solutions} displays the initial conditions specified above with a mesh size of $h=\nicefrac{32}{256}$ on the left and the solution at the final stopping time of $T=10$. Again, comparing these figures to those found in Figure 1 of \cite{HWWL:09:PFC}, we see that our method produces the expected results. We remark that the chosen mesh sizes are fairly coarse due to the chosen size of the domain and the fact that finer mesh sizes would require large computational costs.  However, the domain was chosen as in \cite{HWWL:09:PFC} in order to benchmark our method. Finer mesh sizes will be considered as part of future work on building an efficient solver.

The purpose of the second numerical experiment is to demonstrate that our method accurately captures grain growth of a polycrystal in a supercooled liquid. For the initial conditions, we define three crystallites with different orientations as in \cite{GN:2012:PFC}. The computational domain for this example is $\Omega = [0, 201], \epsilon = 0.25, \tau = 1, h = 0.5,$ and $\alpha = 20$. Snapshots of the numerical solution are shown at different times in Figure \ref{fig:grain-growth}. We observe the growth of distinct crystallites and remark that well-defined crystal-liquid interfaces are clearly observed. Similar results were observed in \cite{GN:2012:PFC, HWWL:09:PFC}.

As a final numerical experiment, we present the total scaled energy evolution for time step sizes $\tau=h, \ 5h, 10h$ with $h=\nicefrac{32}{256}$ in Figure~\ref{fig:scaled-energydt10-5-1}. The large time step sizes have been chosen to emphasize unconditional stability. As observed in Figure~\ref{fig:scaled-energy}, the energy curves decay for all time step sizes thereby demonstrating the unconditional stability of the scheme.

\section{Conclusion}\label{sec:conclusion}

In this paper, we have developed a C$^0$ interior penalty finite element method to solve a special case of the phase field crystal equation \eqref{eq:pfc-pre}. We were able to demonstrate that our method is uniquely solvable, unconditionally energy stable, and unconditionally convergent. We were also able to demonstrate that our method benchmarks well against numerical experiments established in the existing literature. Future work includes the extension of the C$^0$ interior penalty method developed in Section \ref{sec:fem} to the case in which a non-constant mobility is considered and the case in which periodic boundary conditions are considered. Regarding the case in which a non-constant mobility is considered, it is believed that unconditional solvability and stability could be achieved as seen in \cite{HWWL:09:PFC} with the biggest outstanding question centered around the error analysis. Additionally, application of the method to related models such as the modified phase field crystal equation and building efficient solvers for these methods remains of interest.

\section*{Acknowledgment}
We would like to thank Steven M.~Wise for his valuable advice regarding the phase field crystal model. We would also like to thank Susanne C.~Brenner and Li-Yeng Sung for their valuable advice regarding the C$^0$ interior penalty method. The second author would like to acknowledge the support of NSF Grant No. DMS-1520862.


\bibliographystyle{plain}
\bibliography{PFC_C0IP}


\newpage

\begin{table}[H]
	\centering
\begin{tabular}{| c | c | c | c | c |} 
\hline
$h$ & $\norm{error_{\phi}}{2,h}$ & rate & $\norm{error_{\mu}}{H^1}$ & rate \\
\hline
$\nicefrac{32}{8}$ & 0.08412 & N/A & 0.00522 & N/A \\ 
$\nicefrac{32}{16}$ & 0.05896 & 0.71329 & 0.00242 & 1.07627 \\ 
$\nicefrac{32}{32}$ & 0.03466 & 0.85058 & 0.00157 & 0.76970 \\ 
$\nicefrac{32}{64}$ & 0.01568 & 1.10514 & 0.00103 & 0.76082 \\ 
$\nicefrac{32}{128}$ & 0.00601 & 1.30482 & 0.00041 & 1.25840 \\ 
$\nicefrac{32}{256}$ & 0.00255 & 1.17707 & 0.00016 & 1.27362 \\ 
\hline
\end{tabular}
\caption{Errors and convergence rates of the C$^0$-IP method. Parameters and initial conditions are given in the text.}
\label{tab:convergence}
\end{table}

\newpage

\begin{figure}[H]
\centering
\includegraphics[scale=.5]{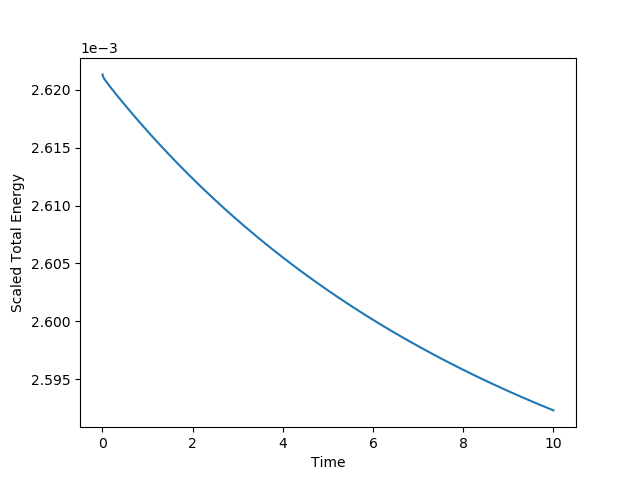}
\caption{The time evolution of the scaled total energy $\nicefrac{F}{32^2}$. The mesh size is $h=\nicefrac{32}{256}$ and the time step size is $\tau = 0.05h$. All other parameters are defined in the text.}
\label{fig:scaled-energy}
\end{figure}

\begin{figure}[H]
\centering
\subfloat{\includegraphics[width = 3in]{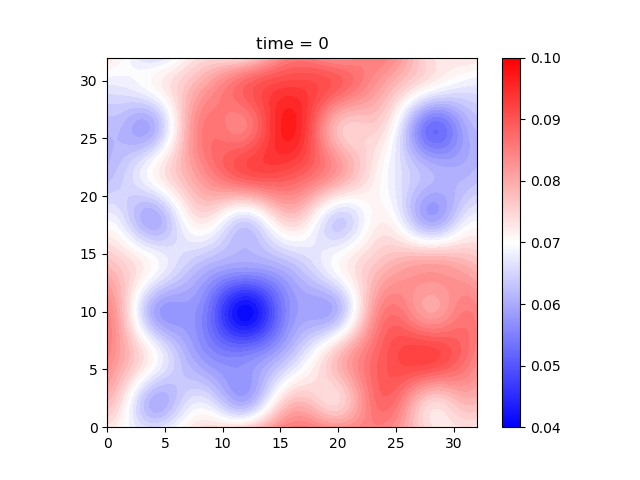}}
\subfloat{\includegraphics[width = 3in]{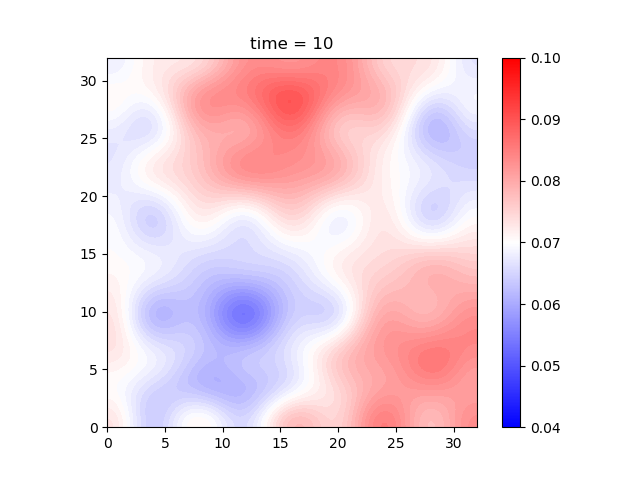}}
\caption{Density plots in which the white regions indicate $\phi = 0.0685$, the red region indicates $\phi = 0.097 $, and the blue region indicates $\phi = 0.04$. The initial configuration is shown on the left and the solution at time $T=10$ is shown on the right. The mesh size is $h=\nicefrac{32}{256}$ and the time step size is $\tau = 0.05h$. All other parameters are defined in the text.}
\label{fig:solutions}
\end{figure}

\begin{figure}[h!]
\centering
\subfloat{\includegraphics[width = 2in]{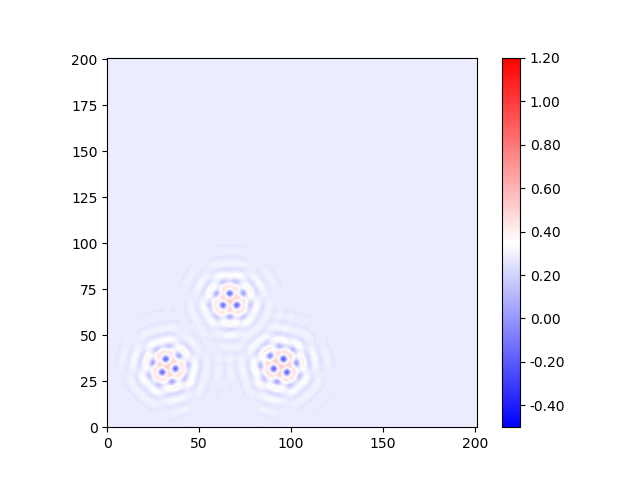}}
\subfloat{\includegraphics[width = 2in]{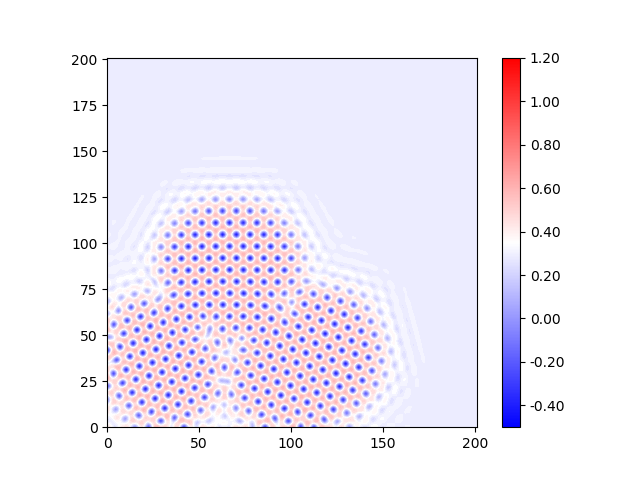}}
\subfloat{\includegraphics[width = 2in]{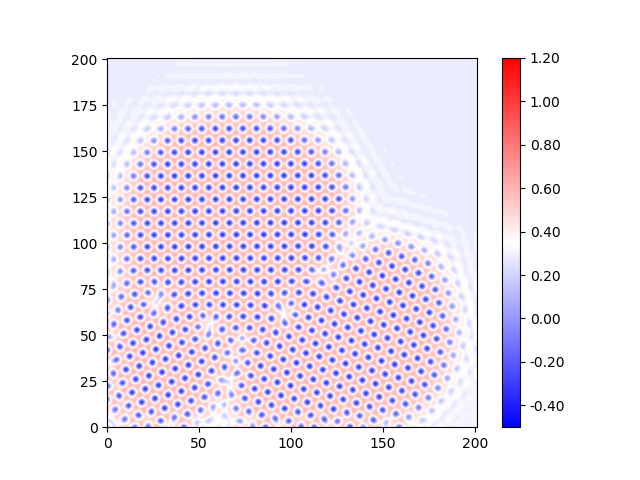}}
\\
\subfloat{\includegraphics[width = 2in]{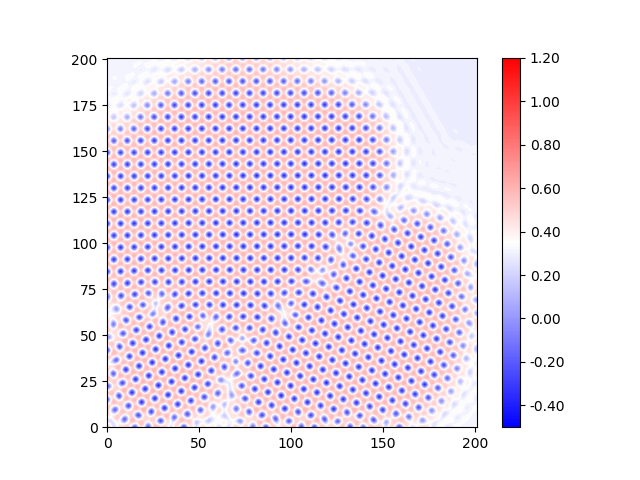}}
\subfloat{\includegraphics[width = 2in]{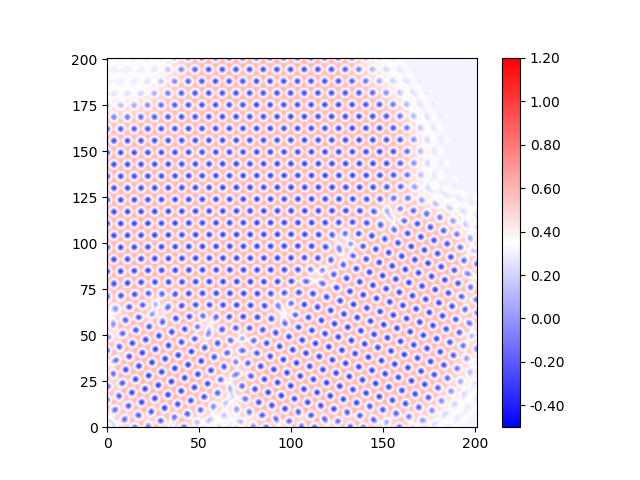}}
\subfloat{\includegraphics[width = 2in]{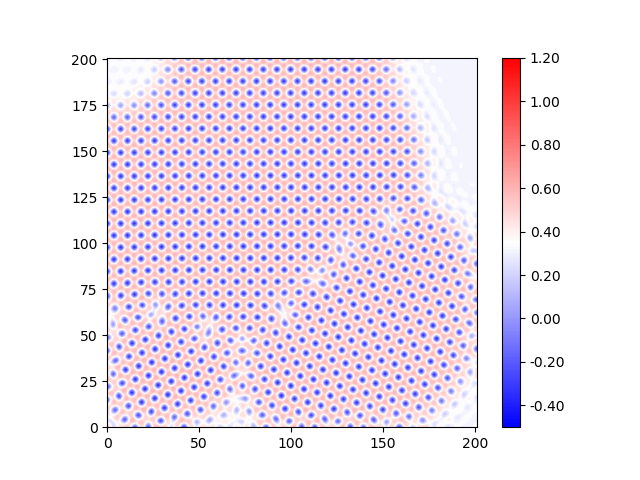}}
\caption{Snapshots of grain growth at times $T=100, 1000, 2000, 3000, 4000, 5000$ are shown above. The mesh size is $h=\nicefrac{201}{402}$ and the time step size is $\tau = 1$. All other parameters are defined in the text.}
\label{fig:grain-growth}
\end{figure}

\begin{figure}[H]
	\centering
			\includegraphics[scale=.45]{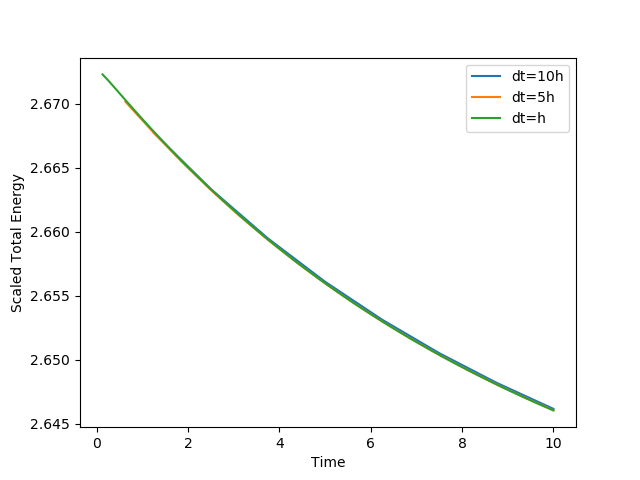} \quad
				\includegraphics[scale=.45]{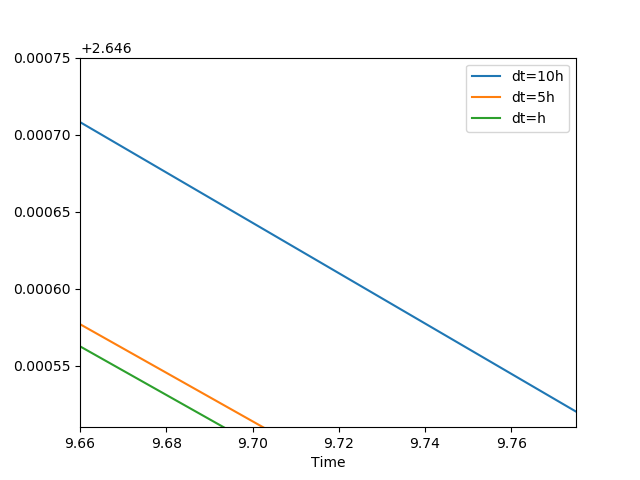}
	\caption {Unconditional stability demonstrated through the time evolution of the scaled total energy $\nicefrac{F}{32^2}$ for time step sizes $\tau = 10h, \ 5h, h$ with the spacial step size $h = \nicefrac{32}{256}$ on the \emph{(left)} and a zoomed image on the \emph{(right)}. All other parameters are defined in the text.}
	\label{fig:scaled-energydt10-5-1}
\end{figure}

\newpage

\appendix

\section{Proof of Lemma \ref{thm:pfc-uniqueness}}\label{app:uniqueness}

	\begin{proof}
We begin by showing $G_h$ is strictly convex. To do so, we consider the second derivative of $G_h(\varphih + s \psi_h)$ with respect to $s$ and set $s=0$. Hence,
	\begin{align*}
G_h(\varphih + s\psi_h) =& \, \frac{\tau}{2}\norm{\frac{\varphih + s\psi_h -\varphih^{m-1}}{\tau}}{-1,h}^2 + \frac{1}{2}\aIPh{\varphih + s\psi_h}{\varphih + s\psi_h} 
	\nonumber
	\\
&+\frac{1}{4}\norm{\varphih + s\psi_h + \avephio}{L^4}^4  +\frac{1-\epsilon}{2}\norm{\varphih + s\psi_h + \avephio}{L^2}^2 
\nonumber
\\
&- 2 \iprd{\nabla \varphih^{m-1}}{\nabla (\varphih + s\psi_h)} .
	\end{align*}
Taking the derivative with respect to $s$, we have
	\begin{align}
G_h^{\prime} (\varphih + s \psi_h) =& \, \frac{1}{\tau} \left(\varphih + s \psi_h -\varphih^{m-1}, \psi_h \right)_{-1,h} + \aIPh{\varphih + s\psi_h}{\psi_h} 
	\nonumber
	\\
&+ \iprd{\psi_h \left(\varphih + s \psi_h + \avephio \right)}{\left(\varphih + s \psi_h + \avephio \right)^2} + (1-\epsilon)\iprd{\varphih + s \psi_h}{ \psi_h} 
\nonumber
\\
&- 2\iprd{\nabla \varphih^{m-1}}{\nabla \psi_h} ,
	\label{eq:pfc-G-first-derivative}
	\end{align}
where $(\zeta, \xi)_{-1,h} := \iprd{\zeta}{\mathsf{T}_h \xi} $. Taking the second derivative with respect to $s$, we have
	\begin{align*}
G_h^{\prime \prime} (\varphih + s \psi_h) = \frac{1}{\tau} \norm{\psi_h}{-1,h}^2 + \aIPh{\psi_h}{\psi_h} + 3\iprd{\left(\varphih + s \psi_h + \avephio \right)^2}{\psi_h ^2} + (1-\epsilon)\norm{\psi_h}{L^2}^2 .
	\end{align*}
Setting $s=0$ and using the coercivity of $a^{IP}_h(\cdot,\cdot)$, we have
	\begin{align*}
G_h^{\prime \prime} (\varphih) = \frac{1}{\tau} \norm{\psi_h}{-1,h}^2 + 3\iprd{\left(\varphih + \avephio \right)^2}{\psi_h ^2} + (1-\epsilon)\norm{\psi_h}{L^2}^2 + \aIPh{\psi_h}{\psi_h}> 0
	\end{align*}
for all $\epsilon < 1$ and $\varphih \in \Zoh$. 

To show $G_h$ is coercive, we need to show that there exists constants $\gamma >0, \beta \ge 0$ such that $G_h(\varphih) \ge \gamma \norm{\varphih}{2,h} - \beta$ for all $\varphih \in \Zoh$. Using the Cauchy Schwarz inequality, Young's inequality, and a Poincar\'{e} type inequality \cite{BWZ:04:poincare, nevcasmethodes}, we have
	\begin{align*}
G_h(\varphih) &\ge \frac{1}{2}\aIPh{\varphih}{\varphih} -2 \iprd{\nabla \varphih^{m-1}}{\nabla \varphih}
\\
&\ge \frac{C_{coer}}{2} \norm{\varphih}{2,h}^2 - 2 \norm{\nabla \varphih^{m-1}}{L^2} \norm{\nabla \varphih}{L^2}
\\
&\ge \frac{C_{coer}}{2} \norm{\varphih}{2,h}^2 - C_P \norm{\nabla \varphih^{m-1}}{L^2} \norm{\varphih}{2,h}
\\
&\ge \frac{C_{coer}}{2}  \norm{\varphih}{2,h}^2 - \frac{C_P}{C_{coer}} \norm{\nabla \varphih^{m-1}}{L^2}^2 - \frac{C_{coer}}{4}\norm{\varphih}{2,h}^2
\\
&\ge \frac{C_{coer}}{4} \norm{\varphih}{2,h}^2 - \frac{C_P}{C_{coer}} \norm{\nabla \varphih^{m-1}}{L^2}^2,
	\end{align*}
where $C_{coer}$ depends on the coercivity of the $a^{IP}_h(\cdot, \cdot)$ inner product and $C_P$ depends on the Poincar\'{e} type inequality. Therefore,
	\begin{align*}
G_h(\varphih) \ge \gamma \norm{\varphih}{2,h}^2 - \beta,
	\end{align*}	 
where $\gamma = \frac{C_{coer}}{4}$ and $\beta = \frac{C_P}{C_{coer}} \norm{\nabla \varphih^{m-1}}{L^2}^2$ do not depend on $\varphih$. Hence, $G_h$ has a unique minimizer, $\varphih^m \in \Zoh$ which solves
	\begin{align*}
G_h^{\prime} (\varphih^m) =& \, \frac{1}{\tau} \left(\varphih^m -\varphih^{m-1},  \psi_h \right)_{-1,h} + \aIPh{\varphih^m}{\psi_h} + \iprd{\left(\varphih^m + \avephio \right)^3}{\psi_h}  
\\
&+ (1-\epsilon)\iprd{\varphih^m + \avephio}{\psi_h}  - \iprd{\nabla \varphih^{m-1}}{\nabla \psi_h} =0,
	\end{align*} 
for all $\psi_h \in \Zoh$ where we have set $s=0$ in \eqref{eq:pfc-G-first-derivative}. Therefore, $\varphih^m \in \Zoh$ is the unique minimizer of $G_h$ if and only if it is the unique solution to
	\begin{equation*}
\iprd{\mu_{h,\star}^m}{\psi_h} - \aIPh{\varphih^m}{\psi_h} - \iprd{\left(\varphih^m + \avephio\right)^3}{\psi_h} - (1-\epsilon)\iprd{\varphih^m + \avephio}{\psi_h} +2 \iprd{ \nabla \varphih^{m-1}}{ \nabla \psi_h }  =0
	\end{equation*}
for all $\psi_h \in \Zoh$, where $\mu_{h,\star}^m \in \Voh$ is the unique solution to
	\begin{align*}
\iprd{\nabla \mu_{h,\star}^m}{\nabla \nu_h}  = -\iprd{\frac{\varphih^m-\varphih^{m-1}}{\tau}}{\nu_h} &  \qquad \forall \,  \nu_h \in\Voh.
	\end{align*}

	\end{proof}

\section{Proof of Lemma \ref{lem:minus-1-norm-bound}} \label{app:-1,h_bound_proof}

\begin{proof}
First, we note that for all $\chi\in Z_h$ and all $\zeta\in\Zoh$ \cite{DFW:15:CHDS}, 
	\begin{equation}
\left|\iprd{\zeta}{\chi}\right| \le \norm{\zeta}{-1,h} \norm{\nabla\chi}{L^2} 
	\label{eq:plus-1-minus-1-estimate}
	\end{equation}
and similarly for all $g \in Z$ and all $\zeta\in\Zoh$, 
	\begin{equation}
\left|\iprd{\zeta}{g}\right| \le \norm{\zeta}{-1,h} \norm{\nabla g}{L^2} .
	\label{eq:plus-1-minus-1-estimate-g}
	\end{equation}
Setting $\nuh = \mathsf{T}_h \dtau \errh^{\phi,m}$ in \eqref{eq:error-eq-a}, using properties \eqref{eq:plus-1-minus-1-estimate} and \eqref{eq:plus-1-minus-1-estimate-g} above, Taylor's theorem, and Young's and H\"{o}lder's inequalities, we have
\begin{align*}
\norm{\dtau \errh^{\phi,m}}{-1,h}^2 &= -\iprd{\nabla \errh^{\mu,m}}{\nabla \mathsf{T}_h \dtau \errh^{\phi,m}} + \iprd{\dtau \phi^{m} - \partial_t \phi^{m}}{\mathsf{T}_h \dtau \errh^{\phi,m}} - \iprd{\dtau \errP^{\phi,m}}{\mathsf{T}_h \dtau \errh^{\phi,m}}
\\
&\le  \norm{\nabla \errh^{\mu,m}}{L^2}\norm{\nabla \mathsf{T}_h \dtau \errh^{\phi,m}}{L^2} + \norm{\dtau \phi^{m} - \partial_t \phi^{m}}{L^2}\norm{\mathsf{T}_h \dtau \errh^{\phi,m}}{L^2}  
\\
&\quad+ \norm{ \dtau \errP^{\phi,m}}{L^2}\norm{\mathsf{T}_h \dtau \errh^{\phi,m}}{L^2} 
\\
& \le  \norm{\nabla \errh^{\mu,m}}{L^2}\norm{\nabla \mathsf{T}_h \dtau \errh^{\phi,m}}{L^2} + C \norm{\dtau \phi^{m} - \partial_t \phi^{m}}{L^2}\norm{\nabla \mathsf{T}_h \dtau \errh^{\phi,m}}{L^2}  
\\
&\quad+ C \norm{ \dtau \errP^{\phi,m}}{L^2}\norm{\nabla \mathsf{T}_h \dtau \errh^{\phi,m}}{L^2} 
\\
&\le  \norm{\nabla \errh^{\mu,m}}{L^2}^2 + \frac{3}{4} \norm{\dtau \errh^{\phi,m}}{-1,h}^2 + C \norm{\dtau \phi^{m} - \partial_t \phi^{m}}{L^2}^2 + C \norm{\dtau \errP^{\phi,m}}{L^2}^2 
\\
&\le \norm{\nabla \errh^{\mu,m}}{L^2}^2 + \frac{3}{4} \norm{\dtau \errh^{\phi,m}}{-1,h}^2 + C \tau \int_{t_{m-1}}^{t_{m}}\norm{\partial_{ss}\phi(s)}{L^2}^2 ds 
\\
&\quad+ \frac{C}{\tau} \int_{t_{m-1}}^{t_{m}} \norm{ \partial_s \phi(s) -\partial_s P_h \phi(s) }{2,h}^2 ds,
\end{align*}
where we note that the inequality $\displaystyle \norm{ \dtau \errP^{\phi,m}}{L^2}^2  \le  \frac{C}{\tau} \int_{t_{m-1}}^{t_{m}} \norm{ \partial_s \phi(s) -\partial_s P_h \phi(s) }{2,h}^2 ds$ can be shown using Taylor's theorem with integral remainder term and an application of the Cauchy-Schwarz and Poincar\'{e} inequalities as follows
\begin{align*}
\norm{\dtau \errP^{\phi,m}}{L^2}^2 &= \norm{\frac{1}{\tau}\int_{t_{m-1}}^{t_m}  \left(\partial_s \left(\phi(s) - P_h \phi(s)\right) \right)\, ds}{L^2}^2
\\
&= \frac{1}{\tau^2} \norm{\int_{t_{m-1}}^{t_m}  \left(\partial_s \phi(s) - \partial_s P_h \phi(s) \right)\, ds}{L^2}^2
\\
&\le \frac{1}{\tau^2} \int_\Omega \left( \left(\int_{t_{m-1}}^{t_m}  1^2 \, ds \right)^{\nicefrac{1}{2}} \left(\int_{t_{m-1}}^{t_m} \left( \partial_s \phi(s) -  \partial_s P_h \phi(s) \right)^2\, ds \right)^{\nicefrac{1}{2}} \right)^2 d{\bf x}
\\
&= \frac{1}{\tau} \int_{t_{m-1}}^{t_m} \int_\Omega \left(\partial_s \phi(s) - \partial_s P_h \phi(s) \right)^2 d{\bf x}\, ds 
\\
&= \frac{C}{\tau} \int_{t_{m-1}}^{t_{m}} \norm{\partial_s \phi(s) -\partial_s P_h \phi(s) }{L^2}^2 ds
\\
&\le \frac{C}{\tau} \int_{t_{m-1}}^{t_{m}} \norm{ \partial_s \phi(s) -\partial_s P_h \phi(s)}{2,h}^2 ds.
\end{align*}
The result immediately follows.
\end{proof}
	

\newpage

\section{Details from the Proof of Lemma \ref{lem:bounds-oscillations}}\label{app:osc-bound-proof}

Following the medius analysis presented in \cite{brenner:11:frontiers}(see pages 101-106), we proceed by bounding all but the last three terms on the right-hand side:
\begin{align*}
&\left|\sum_{K \in \TT_h}  \int_K \left(\Delta^2 P_h \phi^m + \left(P_h \phi^m\right)^3+(1-\epsilon)P_h \phi^m +2 \Delta P_h \phi^{m}- \mu^m\right) \left(\errh^{\phi,m} -E_h \errh^{\phi,m}\right) \, dx \right| 
\\
&\qquad\le \left( \sum_{K \in \TT_h}  \int_K h^4 \norm{\Delta^2 P_h \phi^m + \left(P_h \phi^m\right)^3+(1-\epsilon)P_h \phi^m + 2\Delta P_h \phi^{m}- \mu^m}{L^2(K)}^2\right)^{\nicefrac{1}{2}} \norm{\errh^{\phi,m}}{2,h},
\\
& \left|\sum_{K \in \TT_h}  \int_K \nabla^2 (P_h \phi^m - \phi^m) : \nabla^2 (E_h \errh^{\phi,m}) dx \right| \le C \left(\sum_{K \in \TT_h} |\phi^m - P_h \phi^m|_{H^2(K)}^2\right)^{\nicefrac{1}{2}} \norm{\errh^{\phi,m}}{2,h} 
\\
&\left|\sum_{e \in \Eh} \int_e \dgal[\Bigg]{ \frac{\partial^2 \errh^{\phi,m} }{\partial n_e^2} } \left\llbracket \frac{\partial P_h \phi^{m}}{\partial n_e} \right\rrbracket \right| 
\\
&\qquad\le C \left( \sum_{e \in \Eh} \frac{1}{|e|} \norm{\left\llbracket \frac{\partial (P_h \phi^{m} - \phi^m)}{\partial n_e} \right\rrbracket}{L^2(e)}^2 \right)^{\nicefrac{1}{2}} \norm{\errh^{\phi,m}}{2,h},
\\
&\left|\sum_{e \in \Eh} \int_e \left\llbracket \frac{\partial \Delta P_h \phi^{m}}{\partial n_e} \right\rrbracket \left(\errh^{\phi,m} -E_h \errh^{\phi,m}\right) dS \right| 
\\
&\qquad\le C \left( \sum_{e \in \Eh} |e|^3 \norm{\left\llbracket \frac{\partial \Delta P_h \phi^{m}}{\partial n_e} \right\rrbracket }{L^2(e)}^2\right)^{\nicefrac{1}{2}} \norm{\errh^{\phi,m}}{2,h},
\\
&\left|\sum_{e \in \Eh} \int_e  \left\llbracket \frac{\partial^2 P_h \phi^m}{\partial n_e^2} \right\rrbracket \dgal[\Bigg]{\frac{\partial  \left(\errh^{\phi,m} -E_h \errh^{\phi,m}\right)}{\partial n_e} } dS \right| 
\\
&\qquad \le C \left( \sum_{e \in \Eh} |e| \norm{\left\llbracket \frac{\partial^2 P_h \phi^{m}}{\partial n_e^2} \right\rrbracket }{L^2(e)}^2\right)^{\nicefrac{1}{2}} \norm{\errh^{\phi,m}}{2,h},
\\
&\left| \sum_{e \in \Eh} \int_e \left\llbracket \frac{\partial^2 P_h \phi^m}{\partial n_e \partial t_e} \right\rrbracket \frac{\partial  \left(\errh^{\phi,m} -E_h \errh^{\phi,m}\right)}{\partial t_e} dS \right| 
\\
&\qquad\le C \left( \sum_{e \in \Eh} \frac{1}{|e|} \norm{\left\llbracket \frac{\partial (P_h \phi^{m} - \phi^m)}{\partial n_e} \right\rrbracket }{L^2(e)}^2\right)^{\nicefrac{1}{2}} \norm{\errh^{\phi,m}}{2,h} ,
\\
&\left|  \alpha \sum_{e \in \Eh} \frac{1}{|e|} \int_e  \left\llbracket \frac{\partial P_h \phi^m}{\partial n_e} \right\rrbracket  \left\llbracket \frac{\partial \left(\errh^{\phi,m} -E_h \errh^{\phi,m}\right)}{\partial n_e} \right\rrbracket dS \right|
\\
&\qquad\le  C \left( \sum_{e \in \Eh} \frac{1}{|e|} \norm{\left\llbracket \frac{\partial (P_h \phi^{m} - \phi^m)}{\partial n_e} \right\rrbracket }{L^2(e)}^2\right)^{\nicefrac{1}{2}} \norm{\errh^{\phi,m}}{2,h} .
\end{align*}


\newpage

\section{Ritz Projection Properties}\label{app:H2-Ritz-proj-properties}

\begin{lemma}
Let $\epsilon < 1$ and $u \in H^3(\Omega)$  solve the model problem
\begin{align*}
\Delta^2 u + (1-\epsilon) u &= f \quad \text{in} \quad \Omega
\\
\frac{\partial u}{\partial n} = \frac{\partial \Delta u}{\partial n} &=0 \quad \text{on} \quad \partial\Omega,
\end{align*}
where $f \in L^2(\Omega)$ and define the following norm on the space $H^3(\Omega, \TT_h)$:
\begin{align*}
||| v |||_h^2 := \sum\limits_{K \in \TT_h} |v|_{H^2(K)}^2 + \alpha \sum_{e \in \Eh} |e|^{-1} \norm{\left\llbracket \frac{\partial v}{\partial n}\right\rrbracket}{L^2(e)}^2 + \frac{1}{\alpha} \sum_{e \in \Eh} |e| \norm{\dgal[\Bigg]{\frac{\partial^2 v }{\partial n_e^2} }  }{L^2(e)}^2. 
\end{align*}
Then the Ritz projection \eqref{eq:H2-Ritz-projection} satisfies the following:
\begin{align*}
\norm{u - P_h u}{2,h} \le C h \norm{u}{H^3(\Omega)}.
\end{align*}

\begin{proof}
Note that on the finite element space $Z_h$, the two norms $\norm{\cdot}{2,h}$ and $||| \cdot |||_h$ are equivalent. Additionally, following \cite{brenner:11:frontiers}, it can be shown that there exists constants $C_1$ and $C_2$ such that
\begin{align*}
C_1 ||| u |||_h^2 \le \aIPh{u}{u} \text{ for all } u \in H^3(\Omega, \TT_h)
\end{align*}
and 
\begin{align*}
\aIPh{u}{w} \le C_2 ||| u |||_h ||| w |||_h  \text{ for all } u, w \in H^3(\Omega, \TT_h).
\end{align*}
Thus, using the definition of the Ritz projection \eqref{eq:H2-Ritz-projection}, letting $v_h \in Z_h$, and using the Cauchy-Schwarz and Young's inequalities, we have
\begin{align*}
||| P_h u - u |||_h^2 + (1-\epsilon) \norm{P_h u - u}{L^2}^2 &\le C_2 \aIPh{P_h u - u}{P_h u - u} 
\\
&\quad+ (1 - \epsilon) \iprd{P_h u - u}{P_h u - u}
\\
&= C_2 \aIPh{P_h u - u}{v_h - u} + (1 - \epsilon) \iprd{P_h u - u}{v_h - u}
\\
&\le C_1 C_2 ||| P_h u - u |||_h ||| v_h  - u |||_h 
\\
&\quad+ (1 - \epsilon) \norm{P_h u - u}{L^2(\Omega)}\norm{v_h - u}{L^2(\Omega)}
\\
&\le \frac{1}{2} ||| P_h u - u |||_h  + \frac{C_1^2 C_2^2}{2}||| v_h - u |||_h^2
\\
&\quad+ \frac{(1 - \epsilon)}{2} \norm{P_h u - u}{L^2(\Omega)}^2  + \frac{(1 - \epsilon)}{2} \norm{v_h - u}{L^2(\Omega)}^2.
\end{align*}
Combining like terms and multiplying by 2, we have
\begin{align*}
||| P_h u - u |||_h^2 + (1-\epsilon) \norm{P_h u - u}{L^2}^2 &\le C_1^2 C_2^2 ||| v_h - u |||_h^2 + (1 - \epsilon) \norm{v_h - u}{L^2(\Omega)}^2 \, \forall v_h \in Z_h.
\end{align*}
Let $\Pi_h: C(\overline{\Omega}) \rightarrow Z_h$ be the Lagrange nodal interpolation operator. Then choosing $v_h = \Pi_h u$ and following \cite{brenner:11:frontiers}, we have
\begin{align*}
||| P_h u - u |||_h^2  &\le C h^2 \norm{u}{H^3(\Omega)}^2.
\end{align*}
Noting that $\norm{v}{2,h}^2 \le ||| v |||_h^2$ for any $v \in H^3(\Omega)$ concludes the proof.
\end{proof}
\end{lemma}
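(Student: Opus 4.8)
The plan is to prove the estimate by the classical two-step strategy for a Galerkin-type projection: first establish a best-approximation (quasi-optimality) bound for $P_h u - u$ in an energy norm, and then insert a concrete interpolant whose approximation error is already understood. Throughout I would work with the triple norm $|||\cdot|||_h$, translating back to $\norm{\cdot}{2,h}$ only at the very end via the elementary inequality $\norm{v}{2,h} \le |||v|||_h$.

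First I would record the two structural facts about $\aIPh{\cdot}{\cdot}$ on the broken space $H^3(\Omega,\TT_h)$ that drive the argument, both taken from the framework of \cite{brenner:11:frontiers}: coercivity $C_1 |||w|||_h^2 \le \aIPh{w}{w}$ and boundedness $\aIPh{w}{v} \le C_2 |||w|||_h\, |||v|||_h$. Since $\epsilon < 1$, the combined bilinear form $B(w,v) := \aIPh{w}{v} + (1-\epsilon)\iprd{w}{v}$ is then coercive and bounded with respect to $|||\cdot|||_h$ augmented by the $L^2$ term. The defining relation \eqref{eq:H2-Ritz-projection} is precisely the Galerkin orthogonality $B(P_h u - u, \xi) = 0$ for all $\xi \in Z_h$, so $P_h u$ is the $B$-projection of $u$ onto $Z_h$. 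I would then take an arbitrary $v_h \in Z_h$, use $\xi = P_h u - v_h$ in the orthogonality to kill one term, write $B(P_h u - u, P_h u - u) = B(P_h u - u, v_h - u)$, and apply boundedness together with Cauchy--Schwarz and Young's inequality. Absorbing the factors of $|||P_h u - u|||_h$ and $\norm{P_h u - u}{L^2}$ into the left-hand side yields the quasi-optimality bound $|||P_h u - u|||_h^2 + (1-\epsilon)\norm{P_h u - u}{L^2}^2 \le C\bigl(|||v_h - u|||_h^2 + (1-\epsilon)\norm{v_h - u}{L^2}^2\bigr)$, valid for every $v_h \in Z_h$.

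Next I would specialize to $v_h = \Pi_h u$, the Lagrange nodal interpolant into $Z_h$, and invoke the interpolation estimates of \cite{brenner:11:frontiers}. A useful simplification here is that $u \in H^3(\Omega) \hookrightarrow C^1(\overline{\Omega})$ in two dimensions, so $\nabla u$ is continuous and the jump and average edge contributions of $u$ itself to $|||\cdot|||_h$ vanish; only the interpolant's edge terms survive. The elementwise and edgewise scaled estimates then give $|||\Pi_h u - u|||_h \le C h \norm{u}{H^3(\Omega)}$ and $\norm{\Pi_h u - u}{L^2} \le C h^2 \norm{u}{H^2(\Omega)}$. Substituting these into the quasi-optimality bound and discarding the nonnegative $L^2$ term on the left produces $\norm{u - P_h u}{2,h} \le |||u - P_h u|||_h \le C h \norm{u}{H^3(\Omega)}$, as desired.

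The hard part will not be the abstract projection step, which is routine once coercivity and boundedness are in hand, but rather the interpolation estimate for the triple norm $|||\cdot|||_h$. This norm carries two edge contributions — the $\alpha$-weighted jump of the normal derivative and the $1/\alpha$-weighted average of the second normal derivative — and each must be bounded on every edge through scaled trace inequalities, with the powers of $|e|$ tracked carefully so the final order is the full $O(h)$ rather than merely $O(1)$. This is exactly the interpolation and trace machinery developed in the C$^0$-IP literature, so in the write-up I would state the required estimate and cite \cite{brenner:11:frontiers} for its proof rather than reproduce the edge-by-edge calculations.
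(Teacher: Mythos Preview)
Your proposal is correct and follows essentially the same route as the paper: coercivity and boundedness of $\aIPh{\cdot}{\cdot}$ in the triple norm, Galerkin orthogonality of the combined form $B$, the standard Cauchy--Schwarz/Young absorption to obtain quasi-optimality, and then insertion of the Lagrange nodal interpolant $\Pi_h u$ with the interpolation estimates of \cite{brenner:11:frontiers}. Your explicit observation that $u\in H^3(\Omega)\hookrightarrow C^1(\overline\Omega)$ makes the edge jump terms of $u$ vanish is a helpful clarification that the paper leaves implicit.
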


\end{document}